\documentclass[12pt,a4paper]{article}
\usepackage{amsmath}
\usepackage{amssymb}
\usepackage{amsthm}
\usepackage{amsfonts}
\usepackage{latexsym}
\usepackage{verbatim} 
\usepackage{xcolor}
\usepackage[flushmargin]{footmisc}
\usepackage{color}                    % For creating coloured text and background
\usepackage{mathrsfs}
\usepackage[colorlinks=true,linkcolor=blue,citecolor=blue,urlcolor=blue]{hyperref}

\theoremstyle{plain}

\newtheorem{theorem}{Theorem}[section]
\newtheorem{lemma}[theorem]{Lemma}
\newtheorem{proposition}[theorem]{Proposition}
\newtheorem{corollary}[theorem]{Corollary}
\theoremstyle{definition}
\newtheorem{remark}[theorem]{Remark}
\newtheorem{example}[theorem]{Example}
\theoremstyle{definition}

\newtheorem{assumption}[theorem]{Assumption}
\newtheorem{convention}[theorem]{Convention}

\DeclareMathOperator{\Ker}{Ker}

\DeclareMathOperator{\rad}{rad}
\DeclareMathOperator{\Spec}{Spec}
\DeclareMathOperator{\im}{im}
\DeclareMathOperator{\Sp}{Sp}
\DeclareMathOperator{\Ort}{O}
\DeclareMathOperator{\Ad}{Ad}
\DeclareMathOperator{\ad}{ad}
\DeclareMathOperator{\End}{End}

\DeclareMathOperator{\sgn}{sgn}
\newcommand{\Id}{\mathrm{Id}}
\newcommand{\R}{\mathbb{R}}
\newcommand{\C}{\mathbb{C}}
\newcommand{\g}{\mathfrak{g}}
\newcommand{\lieg}{\mathfrak{g}}
\newcommand{\liegmu}{\mathfrak{g}_\mu}

\newcommand{\lieh}{\mathfrak{h}}
\newcommand{\lies}{\mathfrak{s}}
\newcommand{\ka}{\kappa}
\newcommand{\Z}{\mathbb{Z}}
\newcommand{\Ind}{\operatorname{Ind}}
\newcommand{\id}{\operatorname{id}}

\newcommand{\esym}{\mathfrak{e}}
\newcommand{\fsym}{\mathfrak{f}}
\newcommand{\tsym}{\mathfrak{t}}
\newcommand{\vsym}{\mathfrak{v}}
\newcommand{\gtG}{g_{\tilde G}}
\newcommand{\liek}{\mathfrak{k}}
\newcommand{\OtG}{\Omega_{\tilde G}}
\newcommand{\orb}{\mathcal{O}_\mu}

\newcommand{\TG}{T^*G}
\newcommand{\gad}{g_{\mathrm{ad}}}
\newcommand{\Jad}{J_{\mathrm{ad}}}
\newcommand{\gred}{g_{\mathrm{red}}}
\newcommand{\pair}[2]{\langle #1,\, #2\rangle}
\newcommand{\kah}{\ka_{\lieh}}
\newcommand{\Gform}{\mathcal{G}}

\title{Pseudo--Kähler induction on Lie groups with entire Grauert tubes}
\author{Gabriele Barbieri\footnote{ {\bf e-mail}: gabriele.barbieri@unicatt.it, {\bf Address:} Dipartimento di Scienze Matematiche Fisiche e Naturali, Universit\`a Cattolica del Sacro Cuore, via della Garzetta 48 - Brescia, Italy, {\bf ORCID ID:} 0009-0000-9539-7693 }
\text{ and} Andrea Galasso\footnote{{\bf e-mail}: andrea.galasso@unipegaso.it, andrea.galasso.91@gmail.com, {\bf Address:} Dipartimento di Ingegneria, Universit\`a Pegaso, Sede Centrale Centro Direzionale Isola F2 - Napoli, Italy,
{\bf ORCID ID:} 0000-0002-5792-1674 }}

\date{}

\begin{document}
\maketitle

\begin{abstract} Given a closed subgroup $H\subseteq G$ and a Hamiltonian $H$-space $Y$, one can construct an induced Hamiltonian $G$-space. In this paper we investigate this construction in the setting where $Y$ is endowed with a compatible complex structure. Our aim is to develop symplectic induction in this framework and to establish the corresponding K\"ahler analogues of the classical results. The main idea is to replace the cotangent bundle $T^*G$, appearing in the standard construction of $\operatorname{Ind}$, with the Grauert tube of $G$. We focus on Lie groups admitting a globally defined Grauert tube and study the resulting pseudo--K\"ahler induction procedure. 
\end{abstract}

\newpage

\tableofcontents
\bigskip
\noindent\textbf{Keywords:} Lie group action; momentum map;
reduction; induction; Borel--Weil theorem

\noindent\textbf{Mathematics Subject Classification:} 53D20, 53D50, 22D30

\newpage

\section{Motivation and introduction}

%Let $G$ be a connected Lie group and $H \subseteq G$ be an even-codimensional closed subgroup, then a homogeneous almost K\"ahler structure on the coset space $G/H$ is completely determined by a suitable linear structure on the Lie algebra $\mathfrak{g}$. In that setting, one can apply geometric quantization to $G/H$ and study certain unitary representations that arise when one quantizes the action. For compact Lie groups this construction furnishes a one-to-one correspondence between representations and coadjoint orbits, via the Borel-Weil theorem. This correspondence has provided strong motivation for the study of the interplay between representation theory and quantization in symplectic geometry.

Symplectic analogues of the induced representation $\mathrm{Ind}$ and space of intertwiners $\mathrm{Hom}$ were introduced in \cite{Kazhdan:1978, Weinstein:1978, Guillemin:1982}, motivated by the correspondence between Hamiltonian spaces and unitary $G$-modules (exemplified by e.g. the Borel-Weil theorem for compact groups \cite{Serre:1954} and Kirillov-Bernat theory for exponential groups \cite{Kirillov:1962}). These constructions were extended from coadjoint orbits to Hamiltonian and prequantum spaces in \cite{RatiuZiegler}, where symplectic and prequantum versions of the Frobenius reciprocity and induction-in-stages property were proved.

In geometric quantization, see \cite{Kostant:1970} and \cite{Souriau:1970}, we are interested in reducing the space of sections of the quantizing circle bundle using polarization. In this paper, we incorporate a complex structure into the picture. Note that the definition of $\mathrm{Ind}$ involves $T^*G$, and thus we are required to define a complex structure on $T^*G$. For a compact Lie group $G$ with a bi-invariant Riemannian metric, there exists a natural complex structure $J_{\mathrm{ad}}$ which is defined on the whole cotangent bundle $T^*G$, and it induces a biholomorphism from $T^*G$ to the complexification of $G$, see \cite{Szoke}. Then, $T^*G$ has a natural K\"ahler structure and the construction in the previous paragraph can easily be adapted to this setting. Following this strategy, one should replace $T^*G$ with the Grauert tube $\mathcal{T}_G$ of $G$ in the construction of the $\mathrm{Ind}$ functor. Let $\mathfrak{g}$ be a real Lie algebra of Loeb type, i.e.
\[
\mathrm{Spec}(\mathrm{ad}_\xi) \cap \mathbb{R} = \{0\}, \qquad \forall \xi \in \mathfrak{g}.
\]
Let $\mathfrak{g}^\mathbb{C} = \mathfrak{g} \otimes_{\mathbb{R}} \mathbb{C}$ be its complexification. Then $\mathfrak{g}^\mathbb{C} = \mathfrak{g} \oplus i\mathfrak{g}$ as real vector spaces, and it is a complex Lie algebra with the complex bilinear extension of the Lie bracket. If $G$ is a Lie group of Loeb type the Grauert tube is entire, which means that $\mathcal{T}_G=T^*G$ and $J_{\mathrm{ad}}$ is compatible with the canonical symplectic two form on $T^*G$. The resulting structure on $T^*G$ is pseudo-K\"ahler. The first aim of this paper is to study this latter case.

More precisely, given a closed subgroup $H\subseteq G$
and a Hamiltonian $H$-space $(Y,\omega_Y,\Psi)$, set
\begin{equation}\label{eq:N}
	N:=T^*G\times Y,\qquad \omega_N:=d\varpi+\omega_Y,
\end{equation}
where $\varpi$ is the canonical $1$-form of $T^*G$, and consider the
$G\times H$-action $$(g,h)(p,y)=(gph^{-1},h(y))$$ and its equivariant momentum
map $\phi\times\psi:N\to\lieg^*\times\lieh^*$,
\begin{equation}\label{eq:RZmom}
	\phi(p,y)=pq^{-1},\qquad
	\psi(p,y)=\Psi(y)-q^{-1}p\,\big|_{\lieh}\qquad (p\in T^*_qG)\,.
\end{equation}
The  \emph{induced Hamiltonian $G$-space} is the Marsden--Weinstein quotient at $0\in\lieh^*$,
\begin{equation}\label{eq:Ind}
	\Ind_H^G Y:=N/\!\!/H=\psi^{-1}(0)/H \,.
\end{equation}

Two additional geometric ingredients are fixed. First, an
$\Ad(G)$-invariant and non-degenerate symmetric bilinear form
$\ka=\ka_e$ on $\lieg$ (a pseudo-Euclidean product), inducing a bi-invariant pseudo-Riemannian metric on $G$, and an identification $\lieg^*\cong\lieg$. In the spirit of the adapted complex structures, see \cite{Szoke}, it induces a pseudo-K\"ahler structure $$(\omega_{\TG}=d\varpi,\Jad,\gad)$$ on $T^*G$. Second, a pseudo-K\"ahler structure $(\omega_Y,J_Y)$ on $Y$, with associated pseudo-Riemannian metric $g_Y:=\omega_Y(J_Y\cdot,\cdot)$ (Assumption \ref{ass:Y} below). The product $N$ then carries the pseudo-Riemannian structure
\begin{equation}\label{eq:gN}
	g_N:=\gad\oplus g_Y .
\end{equation}

The pseudo-K\"ahler reduction theorem we wish to invoke is the following (restated from \cite[Appendix B]{rungi}).

\begin{theorem}[{pseudo-K\"ahler reduction, \cite[Thm.~B.6]{rungi}}]
	\label{thm:B6}
	Let a Lie group $H$ act on a pseudo-K\"ahler manifold $(M,\omega,J,g)$ preserving $\omega$, $J$ and $g$, with equivariant momentum map $\Phi_M:M\to \mathfrak{h}^*$. Assume the induced action on
	$\Phi^{-1}_M(0)$ is free and proper, and that
	\begin{equation}\label{eq:B6hyp}
		g\big|_{T_p(H\cdot p)}\ \text{is non-degenerate for every }
		p\in\Phi_M^{-1}(0).
	\end{equation}
	Then $\Phi^{-1}_M(0)/H$ is a smooth manifold carrying a unique	pseudo-Riemannian metric $\gred$ and complex structure $J_{\mathrm{red}}$ whose pull-backs agree with the restrictions of $g$ and $J$, and
	$\omega_{\mathrm{red}}=\gred(\cdot,J_{\mathrm{red}}\cdot)$ is symplectic.
\end{theorem}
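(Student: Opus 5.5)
The plan is to follow the standard Marsden--Weinstein argument and keep track of the pseudo-Riemannian metric at every step. Write $Z:=\Phi_M^{-1}(0)$.

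\textbf{Smooth structure on $Z$.} For $p\in Z$, let $\lieh_M\subseteq T_pM$ be the tangent space $T_p(H\cdot p)$ to the orbit. Equivariance gives $d\Phi_M(v)(\xi)=\omega(\xi_M,v)$, so $\ker d_p\Phi_M=(\lieh_M)^{\omega}$. Freeness on $Z$ makes $\xi\mapsto\xi_M(p)$ injective. Hence $d_p\Phi_M$ is surjective, $0$ is a regular value on $Z$, and $Z$ is a submanifold with $T_pZ=(T_p(H\cdot p))^{\omega}$. Since the action on $Z$ is free and proper, $Z/H$ is a smooth manifold and $\pi:Z\to Z/H$ is a principal $H$-bundle.

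\textbf{Horizontal space.} The key step is to build a complement to the orbit directions. Set $V_p:=T_p(H\cdot p)$ and $W_p:=V_p^{\omega}\cap (V_p^{\omega})^{\perp_g}$. Because $\omega=g(\cdot,J\cdot)$ and $J$ is $g$-orthogonal, we have $V_p^{\omega}=(JV_p)^{\perp_g}$. Therefore
\[
W_p=(V_p\oplus JV_p)^{\perp_g}, \qquad \text{provided } V_p\cap JV_p=0 .
\]
Two facts are needed to make this work.
\begin{itemize}
\item $V_p$ is isotropic for $\omega$, because $\omega(\xi_M,\eta_M)=\Phi_M(p)([\xi,\eta])=0$ on $Z$. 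So $V_p\subseteq V_p^{\omega}$, i.e. $V_p\perp_g JV_p$.
\item Hypothesis \eqref{eq:B6hyp} says $g|_{V_p}$ is non-degenerate, and $g|_{JV_p}$ is then non-degenerate too since $J$ is an isometry.
\end{itemize}
Together these show that $V_p\oplus JV_p$ is a direct sum and that $g$ is non-degenerate on it. Hence $T_pM=V_p\oplus JV_p\oplus W_p$, with $g|_{W_p}$ non-degenerate and $W_p$ $J$-invariant. It also follows that $T_pZ=V_p^{\omega}=V_p\oplus W_p$, so $W_p$ is a complement to $V_p$ in $T_pZ$ and $d\pi_p:W_p\to T_{[p]}(Z/H)$ is an isomorphism.

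I expect this non-degeneracy and direct-sum argument to be the main obstacle. It is exactly where the Riemannian proof uses positivity, and in the pseudo setting it is the hypothesis that supplies it.

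\textbf{Descending the structures.} Define $\gred$ and $J_{\mathrm{red}}$ on $T_{[p]}(Z/H)$ by transporting $g|_{W_p}$ and $J|_{W_p}$ through $d\pi_p$. This is independent of the representative $p$: $h\in H$ preserves $g$ and $J$, hence maps $V_p$ to $V_{hp}$ and $W_p$ to $W_{hp}$, and satisfies $\pi\circ h=\pi$. Smoothness follows because $W$ is a smooth subbundle, as the orthogonal complement of a smooth non-degenerate subbundle; locally one can compute on a slice. Uniqueness holds because $\pi^*\gred=g|_{TZ}$ restricted to the horizontal space forces the values. Here "pull-backs agree" is understood on horizontal vectors, while $\pi^*\gred$ on all of $TZ$ is $g$ with the $V$-directions killed.

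\textbf{Properties of the reduced structures.}
\begin{itemize}
\item $J_{\mathrm{red}}^2=-1$, and $J_{\mathrm{red}}$ is $\gred$-orthogonal.
\item $\omega_{\mathrm{red}}:=\gred(\cdot,J_{\mathrm{red}}\cdot)$ satisfies $\pi^*\omega_{\mathrm{red}}=\iota^*\omega$. This uses that $V_p$ lies in the $\omega$-kernel of $\iota^*\omega$, since $V_p\subseteq T_pZ=V_p^{\omega}$. This is the classical Marsden--Weinstein identity, so $\omega_{\mathrm{red}}$ is closed (as $\pi^*$ is injective) and non-degenerate, hence symplectic.
\item Integrability of $J_{\mathrm{red}}$: for $J$-holomorphic local computations, take horizontal lifts of vector fields $X,Y$ on $Z/H$. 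The Nijenhuis tensor $N_{J_{\mathrm{red}}}(X,Y)$ is the projection to $W$ of $N_J$ applied to the lifts, corrected by $V\oplus JV$ components. These corrections vanish after projection, because $V\oplus JV$ is generated by the holomorphic vector fields $\xi_M$ and $J\xi_M$, which commute with $J$ up to terms tangent to $V\oplus JV$.
\end{itemize}
An alternative for integrability is to identify $Z/H$ with an open subset of the quotient of $M$ by the local $H^{\mathbb{C}}$-action near $Z$. I would try the Nijenhuis computation first.
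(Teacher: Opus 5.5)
Your argument is correct in substance and is the standard one. Its linear-algebraic core is exactly what the paper does in the proof of Proposition \ref{prop:TFAE}; the theorem itself is quoted from \cite{rungi}, not proved. That core consists of isotropy of the orbit on the level set, hence $V_p\perp_g JV_p$; non-degeneracy of $g$ on $V_p\oplus JV_p$; and the $J$-invariant horizontal space $(V_p\oplus JV_p)^{\perp_g}$ with $T_pZ=V_p\oplus W_p$. Two points need repair.

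\textbf{The definition of $W_p$.} As written it is wrong. Since $V_p^{\omega}=(JV_p)^{\perp_g}$, you have $(V_p^{\omega})^{\perp_g}=JV_p$. So $V_p^{\omega}\cap(V_p^{\omega})^{\perp_g}$ is the radical of $g|_{JV_p}$, which is $0$ under \eqref{eq:B6hyp}. What you want is $W_p:=V_p^{\omega}\cap V_p^{\perp_g}=(V_p+JV_p)^{\perp_g}$, and this is what you actually use afterwards.

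\textbf{The integrability step.} It is justified by the wrong mechanism: holomorphicity of the $\xi_M$ plays no role. The correct argument is as follows.
\begin{enumerate}
\item Because $W\subseteq TZ$ is $J$-invariant, the horizontal lifts $\tilde X,\tilde Y$ and $J\tilde X,J\tilde Y$ are all tangent to $Z$. Hence $N_J(\tilde X,\tilde Y)$ is computed from brackets of vector fields on $Z$, with no extension off $Z$ needed, and it vanishes by tensoriality of $N_J$.
\item Let $\mathrm{pr}_W$ be the projection along $V\oplus JV$. Both summands of $TM|_Z=(V\oplus JV)\oplus W$ are $J$-invariant, so $\mathrm{pr}_W J=J\,\mathrm{pr}_W$.
\item On $TZ$ one has $d\pi\circ\mathrm{pr}_W=d\pi$, and on $W$ one has $d\pi\circ J=J_{\mathrm{red}}\circ d\pi$.
\item Brackets of $\pi$-related fields are $\pi$-related, and $J\tilde X$ is the horizontal lift of $J_{\mathrm{red}}X$. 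Therefore
\[
N_{J_{\mathrm{red}}}(X,Y)=d\pi\bigl(\mathrm{pr}_W\,N_J(\tilde X,\tilde Y)\bigr)=0 ,
\]
with no correction terms at all.
\end{enumerate}
Equivalently, $T_{\mathbb C}Z\cap T^{0,1}M=W^{0,1}$ has constant rank and is involutive, being the intersection of two involutive distributions. It is also $H$-invariant and transverse to the orbits, so it descends to $T^{0,1}(Z/H)$.

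With these two fixes the proof is complete. Your reading of ``pull-backs agree'' as a statement on horizontal vectors is also the one the paper uses later, in the proof of Theorem \ref{thm:reproduce}.
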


Our aim is to apply Theorem \ref{thm:B6} to $N$. Thus, we need to study when hypothesis \eqref{eq:B6hyp} holds for the $H$-action on $\psi^{-1}(0)\subset N$. This is a genuine issue already when $Y$ is a point, where $N/\!\!/H=T^*(G/H)$, see Example \ref{ex:TGH} below. 

We also remark that the constructions of this paper admit natural contact and CR counterparts. Replacing a quantizable pseudo-K\"ahler manifold by its quantizing circle bundle, symplectic reduction is replaced by contact and CR reduction, and geometric quantization by the study of the Szeg\H{o} projector; in this setting, theorems of ``quantization commutes with reduction'' type are available, see \cite{hmm}. For a survey on the topic, we refer to \cite{ma}, where the connection with the CR setting is also highlighted. For a strict quantization of compact pseudo-K\"ahler manifolds, compatible with group actions, contextually relevant to this article we refer to \cite{Galasso}. Since the objects considered here lift naturally to the corresponding circle bundles, we expect our constructions to transport to that category with only minor changes; we plan to return to this elsewhere.

Eventually, we explain the organization of this paper. In Section \ref{sec:oscillatorexample} we work out the example of the oscillator Lie groups, which motivates our study and is inspired by \cite{Streater}: we describe the coadjoint orbits and the flat K\"ahler structure of the paraboloid orbit. %For a real \emph{semisimple} group $G$, invariant pseudo-K\"ahler structures on adjoint orbits are tied to ellipticity of the base point: the fine structure of reductive pseudo-K\"ahlerian homogeneous spaces was determined by Dorfmeister--Guan \cite{DorfmeisterGuanGD}, and elliptic adjoint orbits are known to carry invariant pseudo-K\"ahler structures (see e.g.\\cite{Boumuki}); a canonical compatible almost complex structure on adjoint orbits of non-compact semisimple groups, defined through root data, is  studied in \cite{DellaVedovaGatti}. The construction presented below is elementary, is phrased through the functional calculus of Lemma~\ref{lem:calc} rather than through root data, and applies to arbitrary Lie algebras endowed with an invariant pseudo-Euclidean product satisfying \textup{(K1)}--\textup{(K2)} --- in particular to \emph{solvable} {\color{red}groups such} as the oscillator groups of Section~\ref{sec:oscillatorexample}, which lie outside the semisimple theory.
In Section \ref{sec:grauert} we review the theory of Grauert tubes for Loeb type Lie groups. In Section \ref{sec:complexorbit} we explain how to define a natural complex structure on some co-adjoint orbits; our method resembles the one used for compact Lie groups. We recall that, in the setting of solvable Lie groups, a theory of polarizations was developed in \cite{AuslanderKostant:1971}. The two settings are transversal --- Loeb type groups include the compact ones, while not every solvable group is of Loeb type. They overlap, however: nilpotent groups, and the oscillator group of Section~\ref{sec:oscillatorexample}, are both solvable and of Loeb type. It would therefore be interesting to compare the two approaches. In Section \ref{sec:psN} we apply the pseudo-K\"ahler reduction of \cite[Theorem B.6]{rungi} to our setting, and we show by examples --- notably $T^*(G/H)$, see Example \ref{ex:TGH} --- that it is not implied by purely algebraic conditions. In Section \ref{sec:application} we apply our construction to coadjoint orbits, completing the example of the oscillator group, and we prove that self-induction reproduces the canonical structure of the orbit (Theorem \ref{thm:reproduce}). Finally, in Section \ref{sec:stages} we prove a pseudo-K\"ahler version of the induction-in-stages property, condition \eqref{eq:B6hyp} has to be imposed at each stage and we show a counterexample in Example \ref{ex:stagesdiff}.

\section{An example}\label{sec:oscillatorexample}

We consider oscillator Lie groups; we give an example based on \cite{Szoke}, \cite{Streater} and \cite{as}. Let $$(\lambda_1,\dots,\lambda_n)\in (\mathbb{R}^+)^n\,.$$ Let $\mathfrak g$ be the real Lie algebra with basis
\[
X_1,\dots,X_n,\; Y_1,\dots,Y_n,\; Z,\; W,
\]
where all the brackets are zero except
\[
[X_i,Y_i]=\lambda_i Z,\qquad
[W,X_i]=\lambda_i Y_i,\qquad
[W,Y_i]=-\lambda_i X_i.
\]

Note that the ideal spanned by $X_1,\dots,X_n,Y_1,\dots,Y_n,Z$ is the Heisenberg algebra $\mathfrak h_n$.
Moreover,
\[
\mathfrak g = \mathbb{R}W \ltimes \mathfrak h_n.
\]

Let $V$ be the subspace spanned by $\{X_1,\dots,X_n,Y_1,\dots,Y_n\}$, and $S$ the subspace spanned by $\{Z,W\}$. Define a symmetric bilinear form $\kappa_e$ on $\mathfrak g$ by taking
\[
\{X_1,\dots,X_n,Y_1,\dots,Y_n\}
\]
to be an orthonormal set, $W$ and $Z$ to be isotropic with
\[
\kappa_e(Z,W)=1,
\]
and the subspaces $V$ and $S$ to be orthogonal. This form has signature $(2n+1,1)$. By \cite[Section 3]{as} $\kappa_e$ is $\operatorname{ad}(\mathfrak g)$-invariant, i.e., for all $\xi,\zeta,\eta\in\mathfrak g$,
\[
\kappa_e([\xi,\zeta],\eta) + \kappa_e(\zeta,[\xi,\eta]) = 0.
\]

Now, we show explicit computation for the case $n=1$. Consider the four matrices (the definition of \(H\) in \cite{Streater} should be modified by a change of sign):
\[
H=
\begin{pmatrix}
	0&0&0&0\\
	0&0&-1&0\\
	0&1&0&0\\
	0&0&0&0
\end{pmatrix},
\qquad
P=
\begin{pmatrix}
	0&0&1&0\\
	0&0&0&1\\
	0&0&0&0\\
	0&0&0&0
\end{pmatrix},
\]
\[
Q=
\begin{pmatrix}
	0&-1&0&0\\
	0&0&0&0\\
	0&0&0&1\\
	0&0&0&0
\end{pmatrix},
\qquad
E=
\begin{pmatrix}
	0&0&0&2\\
	0&0&0&0\\
	0&0&0&0\\
	0&0&0&0
\end{pmatrix}.
\]

One verifies directly that
\[
[H,P]=Q,\qquad
[H,Q]=-P,\qquad
[P,Q]=E.
\]
Now, we compute a generic element of the group. Since $P^2=Q^2=E^2=0$, their exponentials reduce to
\[
e^{xP}=I+xP,\qquad
e^{yQ}=I+yQ,\qquad
e^{\alpha E}=I+\alpha E.
\]
Furthermore,
\[
e^{tH}
=
\begin{pmatrix}
	1&0&0&0\\
	0&\cos t&-\sin t&0\\
	0&\sin t&\cos t&0\\
	0&0&0&1
\end{pmatrix}.
\]
A direct computation gives	$PQ= E/2$, and hence
\[
(I+xP)(I+yQ)
=
I+xP+yQ+\frac{xy}{2}E,
\]
so that
\[
e^{\alpha E}e^{xP}e^{yQ}
=
I+xP+yQ+\left(\alpha+\frac{xy}{2}\right)E.
\]
Multiplying by $e^{tH}$ on the right, a generic element $g$ is given by
\[	e^{\alpha E}e^{xP}e^{yQ}e^{tH}
=
\begin{pmatrix}
	1 &
	-y\cos t+x\sin t &
	y\sin t+x\cos t &
	2\alpha+xy
	\\[2mm]
	0 & \cos t & -\sin t & x
	\\
	0 & \sin t & \cos t & y
	\\
	0 & 0 & 0 & 1
\end{pmatrix}.
\]

At this point, let $\{H',P',Q',E'\}$ denote the dual basis of $\mathfrak{g}^*$, so that $\langle H',H\rangle = 1$, etc.	For $\mu = h_H H' + h_P P' + h_Q Q' + h_E E' \in \mathfrak{g}^*$ and $X = aH+bP+cQ+dE \in \mathfrak{g}$, the coadjoint action is
\[
\mathrm{ad}^*_X(\mu)(Y) = -\mu([X,Y]).
\]
A direct computation (using the brackets above, and noting that $d$ never contributes since $E$ is central) gives
\[
\mathrm{ad}^*_X(\mu) = (b\,h_Q - c\,h_P)\,H' + (c\,h_E - a\,h_Q)\,P' + (a\,h_P - b\,h_E)\,Q'.
\]
In particular the $E'$-component of every tangent vector vanishes.

The orbit through $\mu_0 = H'+E'$ (i.e.\ $h_H=1$, $h_E=1$, $h_P=h_Q=0$; note that $H'$ itself is a fixed point of the coadjoint action, since every tangent vector has vanishing $E'$-component and vanishes when $h_P=h_Q=h_E=0$) under the connected group generated by $\exp(\alpha E)\exp(xP)\exp(yQ)\exp(tH)$ is the paraboloid
\[
\mathcal{O}_{H'+E'} = \left\{\, h_H H' + h_P P' + h_Q Q' + h_E E' : h_E = 1,\, h_H = 1 + \tfrac{1}{2}\big(h_P^2+h_Q^2\big) \,\right\},
\]
parametrized by $(h_P,h_Q)\in\mathbb{R}^2$.

Now, we write explicit expression for the tangent vectors to the orbit. Writing $w_X := \mathrm{ad}^*_X(\mu)$ for $X\in\{H,P,Q\}$ at a point $\mu$ with coordinates $(h_H,h_P,h_Q,h_E)$:
\[
w_H = -h_Q\,P' + h_P\,Q', \qquad
w_P = h_Q\,H' - h_E\,Q', \qquad
w_Q = -h_P\,H' + h_E\,P'.
\]
These satisfy the linear relation
\[
h_E\, w_H + h_P\, w_P + h_Q\, w_Q = 0,
\]
so for $h_E\neq 0$ the tangent space is
\[
T_\mu\mathcal{O}_{H'+E'} = \mathrm{span}\{w_P, w_Q\}, \qquad \dim T_\mu\mathcal{O}_{H'+E'} = 2.
\]

Natural coordinates on $\mathcal{O}_{H'+E'}$ are $(h_P,h_Q)$, with coordinate vector fields
\[
\partial_{h_P} = \frac{1}{h_E}\,w_Q, \qquad \partial_{h_Q} = -\frac{1}{h_E}\,w_P.
\]

On $\mathfrak{g}^*$, let $\kappa^*$ be the symmetric bilinear form induced by $\kappa_e$, namely $\kappa^*(\alpha,\beta):=\kappa_e(\alpha^\sharp,\beta^\sharp)$, where $\sharp$ inverts $\xi\mapsto\kappa_e(\xi,\cdot)$; explicitly $H'^{\,\sharp}=E$, $E'^{\,\sharp}=H$, $P'^{\,\sharp}=P$, $Q'^{\,\sharp}=Q$, so that
\[
\kappa^*(P',P') = \kappa^*(Q',Q') = 1, \qquad \kappa^*(H',E') = \kappa^*(E',H') = 1,
\]
all other pairings vanishing. In matrix form,
\[
\kappa^* = \begin{pmatrix} 0 & 0 & 0 & 1 \\ 0 & 1 & 0 & 0 \\ 0 & 0 & 1 & 0 \\ 1 & 0 & 0 & 0 \end{pmatrix}, \qquad \det \kappa^* = -1,
\]
with eigenvalues $\{1,1,1,-1\}$: $\kappa^*$ is non-degenerate of signature $(3,1)$, but not positive definite. The pair $\{H',E'\}$ spans a hyperbolic plane, while $\{P',Q'\}$ spans an orthogonal Euclidean plane.

Now, we write the restriction of $\kappa^*$ to $T_\mu\mathcal{O}_{H'+E'}$. For $v=\mathrm{ad}^*_X(\mu)$ and $v'=\mathrm{ad}^*_{X'}(\mu)$ (with $X=aH+bP+cQ+dE$, $X'=a'H+b'P+c'Q+d'E$), write
\[
v = v_H H' + v_P P' + v_Q Q', \qquad v' = v'_H H' + v'_P P' + v'_Q Q'.
\]
Since every tangent vector has vanishing $E'$-component, and $\kappa^*(H',\cdot)$ pairs only with $E'$, the $H'$-component of $v$ and $v'$ never contributes to $\kappa^*(v,v')$. Explicitly,
\[
\kappa^*(v,v') = v_P v'_P + v_Q v'_Q = (c h_E - a h_Q)(c' h_E - a' h_Q) + (a h_P - b h_E)(a' h_P - b' h_E).
\]

In the basis $\{w_P,w_Q\}$ the Gram matrix is
\[
\kappa^*\big|_{T_\mu\mathcal{O}_{H'+E'}} = \begin{pmatrix} h_E^2 & 0 \\ 0 & h_E^2 \end{pmatrix} = h_E^2\, I_2, \qquad \det = h_E^4 \neq 0 \ \ (h_E\neq 0).
\]
In the coordinate basis $\{\partial_{h_P},\partial_{h_Q}\}$ this simplifies to the identity:
\[
\kappa^*\big|_{\mathcal{O}_{H'+E'}} = dh_P^{\,2} + dh_Q^{\,2}.
\]
Thus, $\kappa^*$ restricts to a non-degenerate, positive-definite (Riemannian), and \emph{flat} metric on $\mathcal{O}_{H'+E'}$ at every point.

Eventually, the {Kirillov--Kostant--Souriau} symplectic form on a coadjoint orbit is defined by
\[
\omega_\mu\big(\mathrm{ad}^*_X(\mu),\, \mathrm{ad}^*_Y(\mu)\big) = -\langle \mu, [X,Y]\rangle\,,
\]
consistently with the convention \eqref{eq:kks} adopted in Section \ref{sec:orbits} below (under the identification $\g^*\cong\g$).
Using $[X,Y] = (a'c-ac')\,P + (ab'-a'b)\,Q + (bc'-b'c)\,E$, we obtain
\[
\omega_\mu\big(\mathrm{ad}^*_X(\mu),\, \mathrm{ad}^*_Y(\mu)\big) = (ac'-a'c)\,h_P - (ab'-a'b)\,h_Q - (bc'-b'c)\,h_E.
\]

On the generators:
\[
\omega(w_P,w_Q) = -h_E, \qquad \omega(w_H,w_P) = -h_Q, \qquad \omega(w_H,w_Q) = h_P.
\]
In intrinsic form, for tangent vectors with components $(v_P,v_Q)$ and $(v'_P,v'_Q)$,
\[
\omega_\mu(v,v') = \frac{v_Q v'_P - v_P v'_Q}{h_E},
\]
and in the coordinates $(h_P,h_Q)$,
\[
\omega = -\frac{1}{h_E}\, dh_P \wedge dh_Q.
\]

Thus $(\mathcal{O}_{H'+E'},\omega)$ is symplectomorphic to $(\mathbb{R}^2, -\tfrac{1}{h_E}\,dp\wedge dq)$, i.e.\ the standard symplectic plane up to a constant rescaling.

The two structures are related through the complex structure
\[
J(v_P,v_Q) = (-v_Q,\,v_P)\,,
\qquad
\gamma(v,v'):=\omega(Jv,v') = \frac{1}{h_E}\,\kappa^*(v,v')\,,
\]
where $J$ is the $90^\circ$ rotation in the $(h_P,h_Q)$-plane --- precisely the infinitesimal rotation generated by $H$, whose flow is $$(h_P,h_Q)\mapsto(\cos t\,h_P-\sin t\,h_Q,\ \sin t\,h_P+\cos t\,h_Q),$$ as one reads off from $w_H$. Thus the triple
\[
\Bigl(\omega\,,\ J\,,\ \gamma=\tfrac1{h_E}\bigl(dh_P^{\,2}+dh_Q^{\,2}\bigr)\Bigr)
\]
equips $\mathcal{O}_{H'+E'}$ with the structure of a \emph{flat K\"ahler manifold}: this is the geometric realization, via the orbit method, of the phase space of the harmonic oscillator. As we shall see, $(J,\gamma)$ is exactly the \emph{canonical} invariant pseudo-K\"ahler structure produced by Theorem \ref{thm:orbit} of Section \ref{sec:orbits}; the corresponding induction picture is completed in Example \ref{ex:oscillator} of Section \ref{sec:application}.

\section{Entire Grauert tubes}
\label{sec:grauert}

The literature on Grauert tubes is extensive, and we do not attempt a comprehensive review here. Instead, we refer the reader to \cite{Szoke}, which is the most relevant to our present work. Let $M$ be an $n$--dimensional complete (not necessarily compact) connected real--analytic manifold. The Bruhat--Whitney complexification of $M$ is a complex manifold $(\tilde M,J_{\tilde M})$ in which $M$ embeds as a totally real submanifold. The choice of a real--analytic pseudo--Riemannian metric $\kappa_M$ on $M$ determines a canonical pseudo--Kähler structure on {domains of} $(\tilde M,J_{\tilde M})$, symplectomorphic to tubular neighborhoods of the zero section in the tangent bundle of $M$, which we now define.

For $\tau>0$, let $T^\tau M$ be the tubular neighborhood of radius $\tau$ of the zero section in $TM$. For sufficiently small $\tau$ there exists an intrinsic so--called \emph{adapted complex structure} $J_{\mathrm{ad}}$ on $T^\tau M$. Furthermore $J_{\mathrm{ad}}$ is compatible with the canonical symplectic structure $\Omega_{\mathrm{can}}$ on $TM$, and the associated pseudo--Riemannian metric restricts to $\kappa_M$ along $M$. For a general $(M,\kappa_M)$ the structure $J_{\mathrm{ad}}$ need not be defined on the whole tangent bundle $TM$.

An equivalent viewpoint, see \cite[Section~8]{Szoke},
is obtained by starting from the canonical symplectic
structure $\omega_{T^*M}$ on the cotangent bundle $T^*M$.
Instead of first constructing the complexification and then
endowing it with the pseudo--Kähler structure induced by
$\kappa_M$, one introduces a compatible complex structure
$J_{M,\mathrm{ad}}$ on a suitable neighborhood of the zero section in $(T^*M,\omega_{T^*M})$. Such a neighborhood is called a \emph{Grauert tube}. The complex structure $J_{M,\mathrm{ad}}$ is canonically
determined by the metric $\kappa_M$ and is called the
\emph{adapted complex structure}. The Grauert tube is said to be \emph{entire} if the adapted complex structure exists on the whole cotangent bundle $T^*M$.

If $G$ is compact and semisimple, then the metric $\kappa_G$
is Riemannian and the associated Grauert tube is entire. Examples of manifolds carrying entire pseudo--Riemannian metrics are not easy to construct; see \cite{Szoke}. On the other hand, in \cite{Medina} it was shown that the so--called oscillator Lie algebras admit invariant Lorentzian bilinear forms. The corresponding simply connected Lie groups therefore admit a bi--invariant Lorentzian metric and they are the only solvable non--commutative simply connected Lie groups
with this property. The Grauert tubes of these Lie groups are entire, see \cite[Theorem~0.5]{Szoke}. In the next subsection, we introduce notation and we make the preceding discussion more precise. 

%We also refer to \cite{HalverscheidIannuzzi} for the study of the Grauert tube on $SL(2,\mathbb R)$. However, by Cartan's criteria for solvability and semi-simplicity, these structures cannot be induced by the Killing form.

\subsection{Lie groups with entire Grauert tubes}

In this subsection we recall standard preliminaries about Grauert tubes of Lie groups, the main references for this section are \cite{Szoke} and \cite{paogal}. Note that in \cite{paogal} they consider the case of compact Lie group, and we refer to \cite{paogal} for more background. Here, we adopt the perspective of \cite{Szoke} and we always assume $G$ being a connected Lie group of Loeb type. Denote by $\mathfrak g$ the Lie algebra of left--invariant vector fields, whose dual is $\mathfrak g^{*}$. We shall canonically identify $\mathfrak g$ with the tangent space of $G$ at the identity $e \in G$, namely $T_eG$. The dimension of $G$ will be denoted by $d_G$.

Let $\kappa_e$ be an $\mathrm{Ad}$--invariant pseudo--Euclidean product on $\mathfrak g$, where $\mathrm{Ad}$ denotes the adjoint representation. We denote by $\kappa^{*}$ the corresponding product on $\mathfrak g^{*}$ and by $\kappa$ the induced pseudo--Riemannian metric on $G$. The complexification $(\tilde G,J_{\tilde G})$ of $G$ is a connected complex $d_G$--dimensional Lie group endowed with a complex structure $J_{\tilde G}$. Its Lie algebra $\tilde{\mathfrak g}$ is the complexification of $\mathfrak g$, $\tilde{\mathfrak g} = \mathfrak g \otimes \mathbb C$.

Let $\exp_{\tilde G} : \tilde{\mathfrak g} \to \tilde G$ be the exponential map. Then
\[
P(g,\xi) := g\,\exp_{\tilde G}(i\xi),
\qquad g\in G,\ \xi\in\mathfrak g \,.
\]
following \cite{Szoke}, $P$ is called the {polar map}. Since $G$ is of Loeb type, by \cite[Proposition 7.4]{Szoke} the map $P : G \times \mathfrak g \longrightarrow \tilde G$ is a local diffeomorphism at each point of $G\times\mathfrak g$, and thus it is everywhere regular; an explicit formula for $dP$, together with a self-contained proof of this regularity, is given in Lemma \ref{lem:dP} below.

In the sequel we shall use the shorthand notation
\[
g\cdot \xi := \mathrm{d}_e L_g(\xi), \qquad
\xi\cdot g := \mathrm{d}_e R_g(\xi),
\]
where $L_g,R_g : G\to G$ denote the left and right translations by
$g\in G$, respectively. Composing $P$ with the trivialization of the tangent bundle $TG$ given by left translations to the identity, we obtain the following.
If $v\in T_gG$, then $v=g\cdot\xi$ for a unique $\xi\in\mathfrak g$. This yields 
\[
E : TG \longrightarrow \tilde G,
\qquad
E(g,v)=E(g,g\cdot\xi)=g\,\exp_{\tilde G}(i\xi).
\]

In the present setting, however, the following result holds.

\begin{theorem}[{\cite[Theorem 7.5]{Szoke}}]\label{thm:Szoke}
	Let $(G,\kappa_G)$ be a Lie group of Loeb type. Then the adapted complex	structure $J_{\mathrm{ad}}$ is defined on $TG$, and the map
	\[
	E : (TG,J_{\mathrm{ad}}) \longrightarrow (\tilde G,J)
	\]
	is a biholomorphism.
\end{theorem}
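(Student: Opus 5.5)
The natural route is to define the complex structure by pulling back and then check that it is the adapted one. Set $J := E^*J_{\tilde G}$ on $TG$; this makes sense once $E$ is known to be a diffeomorphism onto $\tilde G$. With this definition, the statement reduces to two claims:
\begin{enumerate}
\item $E$ is a global diffeomorphism $TG\to\tilde G$;
\item $E^*J_{\tilde G}$ satisfies the defining property of the adapted complex structure.
\end{enumerate}
The defining property in (2) is the following. For every geodesic $\gamma$ of $(G,\kappa)$, the map $\sigma+i\tau\mapsto \tau\dot\gamma(\sigma)$ from $\C$ (or its domain of definition) to $TG$ must be holomorphic. By uniqueness of adapted complex structures, which holds for pseudo-Riemannian real-analytic metrics just as in the Riemannian case (cf.\ \cite{Szoke}), $E^*J_{\tilde G}$ then coincides with $J_{\mathrm{ad}}$ on all of $TG$. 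In particular, $J_{\mathrm{ad}}$ is defined on the whole tangent bundle.

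For (2), I would use bi-invariance of $\kappa$. Since $\kappa$ is bi-invariant, its geodesics are the curves $\gamma(\sigma)=g\exp(\sigma\xi)$, with velocity $\dot\gamma(\sigma)=\gamma(\sigma)\cdot\xi$ in the left trivialization. Hence
\[
E(\tau\dot\gamma(\sigma))=g\exp(\sigma\xi)\exp_{\tilde G}(i\tau\xi)=g\exp_{\tilde G}((\sigma+i\tau)\xi),
\]
because $\xi$ commutes with itself. The right-hand side is a holomorphic function of $\sigma+i\tau$, being the restriction of a complex one-parameter subgroup of $\tilde G$. This settles (2) without further work.

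For (1), local regularity is \cite[Proposition 7.4]{Szoke}: at $(g,\xi)$ one computes $g^{-1}dP(g\cdot\eta,\zeta)=\Ad(\exp(-i\xi))\eta+\frac{1-e^{-\ad_{i\xi}}}{\ad_{i\xi}}(i\zeta)$. The Loeb condition makes $\ad_{i\xi}$ have no nonzero eigenvalues in $2\pi i\Z$, and makes the real/imaginary splitting non-degenerate, so $dP$ is invertible. The remaining task is global injectivity and surjectivity, and this is the step I expect to be the main obstacle. I would attempt it as follows. For surjectivity, use that $P$ is $G$-equivariant for the left action and a local diffeomorphism, and show that its image is open and closed. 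Closedness would come from a properness argument: if $g_n\exp(i\xi_n)$ converges in $\tilde G$, project to $\tilde G/G$, which is legitimate because the map $\xi\mapsto G\exp(i\xi)$ into $G\backslash\tilde G$ should itself be a local diffeomorphism. For injectivity, suppose $g\exp(i\xi)=g'\exp(i\xi')$. Applying the anti-holomorphic involution fixing $G$ gives $\exp(2i\xi)$-type relations, namely $\exp(-i\xi)\exp(i\xi)$ compared with conjugate data. I would then reduce to showing that $\xi\mapsto \exp(i\xi)\overline{\exp(i\xi)}^{-1}=\exp(2i\xi)$ is injective on $\mathfrak g$, which again uses the Loeb spectral condition, together with a covering argument in the connected, simply connected setting. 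If a clean direct argument fails, I would fall back on Szőke's approach and check that $E$ is a covering map whose degree is forced to be one by the existence of the global section $G\subset\tilde G$, given that $E^{-1}(G)$ is the zero section.
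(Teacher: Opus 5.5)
Your step (2) is correct and is the standard argument. Bi-invariance makes the geodesics $g\exp(\sigma\xi)$, and $E(\tau\dot\gamma(\sigma))=g\exp_{\tilde G}((\sigma+i\tau)\xi)$ is holomorphic in $\sigma+i\tau$. This step only needs $E$ to be a \emph{local} diffeomorphism, which is Lemma~\ref{lem:dP}. The precise criterion there is invertibility of $\esym(\ad_\xi)=\sin(\ad_\xi)/\ad_\xi$, whose zeros lie at $\pi(\Z\setminus\{0\})$. Your condition that $\ad_{i\xi}$ has no eigenvalues in $2\pi i(\Z\setminus\{0\})$ only controls the $d\exp$ factor. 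Hence $E^*J_{\tilde G}$ is an adapted complex structure on all of $TG$, and the first assertion of the theorem follows from your argument plus uniqueness. The genuine gap is the second assertion, global bijectivity of $E$. This is exactly what the paper takes from Sz\H{o}ke, and your proposal leaves it as a sketch with no working mechanism.

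For surjectivity, openness of the image is automatic, but closedness requires showing that $(\xi_n)$ stays bounded whenever $g_n\exp_{\tilde G}(i\xi_n)$ converges. Knowing that $\xi\mapsto G\exp_{\tilde G}(i\xi)$ is a local diffeomorphism into $G\backslash\tilde G$ gives no such bound, and it presupposes $G$ closed in $\tilde G$.

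For injectivity, your reduction works if you use $\theta(x)^{-1}x=\exp_{\tilde G}(2i\xi)$ for $x=g\exp_{\tilde G}(i\xi)$; note that $x\theta(x)^{-1}=g\exp_{\tilde G}(2i\xi)g^{-1}$ instead. But the Loeb condition only makes $\xi\mapsto\exp_{\tilde G}(i\xi)$ an immersion, and no infinitesimal condition can do more. Take $G=\R_{>0}\subset\tilde G=\C^*$ with $\theta(z)=\bar z$. All Lie-algebraic data coincide with those of $\R\subset\C$, yet $E(r,\xi)=re^{i\xi}$ is a non-injective covering, and $E^{-1}(G)=G\times 2\pi\Z$ is not the zero section.

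The same example breaks your fallback degree-one argument at the step ``$E^{-1}(G)$ is the zero section''. Its other input, that $E$ is a covering, is again the unproved properness. A proof must therefore use a global property of $\tilde G$, namely that it is the (universal) complexification of $G$ rather than just some complex group containing $G$ as a totally real subgroup, together with a real properness argument. Your ``covering argument in the connected, simply connected setting'' supplies neither. Also, the theorem does not assume $G$ simply connected, since compact $G$ is included.
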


This result puts the theory of complex reductive groups in contact with the general theory of Grauert tubes. In particular, Theorem \ref{thm:Szoke}, Theorem \cite[Theorem 0.5 and Theorem 0.6]{Szoke} implies that $(TG,J_{\mathrm{ad}},\Omega_{\mathrm{can}})$ is a pseudo-Kähler manifold. Furthermore, if
	\[
	\OtG := (E^{-1})^*(\Omega_{\mathrm{can}}),
	\]
then $(\tilde G,J_{\tilde G},\OtG)$ is also a pseudo-Kähler manifold and $E$ is an isomorphism of pseudo--Kähler manifolds. By \cite[Theorem 0.6]{Szoke} the induced pseudo-Riemannian metric
	\[
	\gtG(\cdot,\cdot):=\OtG(J_{\tilde G}\cdot,\cdot)
	\]
on $\tilde G$ restricts to $\kappa$ along $G$. (We adopt throughout the convention $g=\omega(J\cdot,\cdot)$ for the metric of a compatible pair; the opposite convention $\omega(\cdot,J\cdot)$, also common in the literature, amounts to replacing $J$ by $-J$; cf.\ Convention \ref{conv:g}.)
	
Now, consider the norm function $\|\cdot\|_\kappa$ on $TG$, defined as	the pull--back of the norm $\|\cdot\|_{\kappa_e}$ on $\mathfrak g$	under the above trivialization:
	\[
	\|(g,g\cdot\xi)\|_\kappa
	:=
	\|g\cdot\xi\|_{\kappa_g}
	=
	\|\xi\|_{\kappa_e}.
	\]
	
For $x\in\tilde G$, let
\[	\mathrm{val}_x:\mathfrak g\to T_x\tilde G,	\qquad	\xi\mapsto \xi_{\tilde G}(x),	\]
where $\xi_{\tilde G}\in\mathfrak X(\tilde G)$ is the vector field induced by $\xi$ under the action of $G$ on $\tilde G$ by left	translations. Then
\[ \sigma_x := \mathrm{val}_x^*\bigl((\gtG)_x\bigr)\]
is a pseudo--Euclidean scalar product on $\mathfrak g$. For a vector subspace $\mathfrak h\subset\mathfrak g$ we define the vector subbundle
\[	\mathfrak h_{\tilde G}(x):=\mathrm{val}_x(\mathfrak h)\,.	\]

{\begin{remark}\label{rem:sigma}
		In the notation of Section \ref{sec:polar} below, at $x=P(q,\zeta)$ one computes
		$$\sigma_x(\xi,\eta)=\kappa_e\bigl(\fsym(\ad_\zeta)\Ad_{q^{-1}}\xi,\,\Ad_{q^{-1}}\eta\bigr)
		=\kappa_e\bigl(\fsym(\ad_{\zeta_q})\,\xi,\,\eta\bigr),
		\qquad \zeta_q:=\Ad_q\zeta,$$
		with $\fsym(z)=z\cot z$, since the generator of the left action is the
		horizontal vector $(\Ad_{q^{-1}}\xi,0)$ in the trivialization
		\eqref{eq:ident}, the second equality following from
		$\Ad_q\ad_\zeta\Ad_{q^{-1}}=\ad_{\Ad_q\zeta}$ and the $\Ad$-invariance of
		$\kappa_e$. In particular $\sigma_x$ is non-degenerate for every $x$
		(Lemma \ref{lem:loeb}), and, since $\fsym(0)=1$ and
		$\zeta_q\in\ker\ad_{\zeta_q}$ force $\fsym(\ad_{\zeta_q})\zeta_q=\zeta_q$,
		the radial vector $\zeta_q$ satisfies
		$\zeta_q^{\perp_{\sigma_x}}=\zeta_q^{\perp_{\kappa_e}}$: this is the content
		of \cite[Lemma 4.11]{paogal} in the present setting. For a subspace
		$\lieh$, however, one only has
		$\lieh^{\perp_{\sigma_x}}=\bigl(\fsym(\ad_{\zeta_q})\lieh\bigr)^{\perp_{\kappa_e}}$,
		which differs from $\lieh^{\perp_{\kappa_e}}$ in general.
\end{remark}}

\section{Complex structures on coadjoint orbits}
\label{sec:complexorbit}

\subsection{The linear--algebraic construction}\label{sec:linalg}

Let $V$ be a finite-dimensional real vector space and let $\omega\colon V\times V\to\R$ be an antisymmetric bilinear form, \emph{possibly degenerate}, with radical
\[
W:=\rad\omega=\{\,w\in V:\ \omega(v,w)=0\ \ \forall v\in V\,\}.
\]
Let $g$ be a pseudo-Euclidean product on $V$ (a nondegenerate symmetric
bilinear form of arbitrary signature){, and let $A\in\End(V)$ be defined by}
\begin{equation}\label{eq:defA}
	\omega(v,w)=g(v,Aw)\qquad\forall\,v,w\in V,
\end{equation}
{the existence and uniqueness of such an $A$ being consequences of the non-degeneracy of $g$.}
For $T\in\End(V)$ we write $T^{*}$ for the $g$-adjoint, characterized by
$g(Tv,w)=g(v,T^{*}w)$, and $S^{\perp}$ for the $g$-orthogonal complement of a
subspace $S\subseteq V$. Spectra always refer to the complexified operators on
$V_{\C}=V\otimes_{\R}\C$; bilinear forms are extended $\C$-bilinearly, and
$\sigma\colon V_{\C}\to V_{\C}$ denotes the conjugation fixing $V$. An operator
$T\in\End(V_{\C})$ is called \emph{real} if $\sigma T\sigma=T$, i.e.\ if it
preserves $V$.

\medskip
\noindent\textbf{Hypotheses.} Throughout this section we assume:
\begin{itemize}
	\item[\textup{(H1)}] $W\cap W^{\perp}=\{0\}$;
	\item[\textup{(H2)}] $\Spec(A)\cap\R=\{0\}$.
\end{itemize}

The following two lemmas are proved via standard arguments in linear algebra.

\begin{lemma}\label{lem:kerim}
	The operator $A$ is $g$-antiadjoint, $A^{*}=-A$, and
	\[
	\ker A=W,\qquad \im A=W^{\perp}.
	\]
\end{lemma}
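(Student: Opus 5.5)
The plan is to read off all three claims directly from the defining identity \eqref{eq:defA}, using only the non-degeneracy of $g$ and the antisymmetry of $\omega$. Neither (H1) nor (H2) will be needed at this stage.

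For antiadjointness, I will fix $v,w\in V$ and expand both sides through \eqref{eq:defA} together with the symmetry of $g$:
\[
g(Av,w)=g(w,Av)=\omega(w,v)=-\omega(v,w)=-g(v,Aw).
\]
Comparing this with the definition $g(Av,w)=g(v,A^{*}w)$ for every $v$, non-degeneracy of $g$ gives $A^{*}w=-Aw$, hence $A^{*}=-A$.

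For the kernel, I will use that $w\in W$ means $\omega(v,w)=0$ for all $v$, i.e.\ $g(v,Aw)=0$ for all $v$. By non-degeneracy of $g$ this holds exactly when $Aw=0$, so $\ker A=W$.

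For the image, the standard fact is that for any $T\in\End(V)$ one has $\im T=(\ker T^{*})^{\perp}$. To see it, first note the inclusion $\im T\subseteq(\ker T^{*})^{\perp}$: if $T^{*}u=0$ then $g(Tv,u)=g(v,T^{*}u)=0$. Then compare dimensions. Since $g$ is non-degenerate, $\dim S^{\perp}=\dim V-\dim S$ for every subspace $S$, and $\operatorname{rank}T^{*}=\operatorname{rank}T$, because $T^{*}$ is the transpose of $T$ under the isomorphism $V\cong V^{*}$ induced by $g$. Applying this with $T=A$ and $\ker A^{*}=\ker(-A)=W$ gives $\im A=W^{\perp}$.

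The only point needing care is the dimension formula $\dim S^{\perp}=\dim V-\dim S$ in indefinite signature. It holds because $S^{\perp}$ is the kernel of the surjection $V\to S^{*}$, $v\mapsto g(v,\cdot)|_S$, which is surjective since $g$ is non-degenerate. No positivity of $g$ enters, so the lemma holds for arbitrary signature.
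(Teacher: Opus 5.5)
Your proof is correct and follows the paper's argument: the same chain of equalities gives $A^{*}=-A$, non-degeneracy of $g$ gives $\ker A=W$, and then $\im A=(\ker A^{*})^{\perp}=(\ker A)^{\perp}=W^{\perp}$. The only difference is that you justify $\im T=(\ker T^{*})^{\perp}$ by an explicit dimension count valid in any signature, whereas the paper simply invokes non-degeneracy of $g$ for this step.
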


\begin{proof}
	By antisymmetry of $\omega$, by \eqref{eq:defA} and by symmetry of $g$,
	\[
	g(Av,w)=g(w,Av)=\omega(w,v)=-\omega(v,w)=-g(v,Aw)\qquad\forall v,w\in V,
	\]
	so $A^{*}=-A$. By nondegeneracy of $g$,
	\[
	Aw=0\iff g(v,Aw)=0\ \ \forall v\iff\omega(v,w)=0\ \ \forall v\iff w\in W,
	\]
	hence $\ker A=W$. Finally, using nondegeneracy of $g$, $$\im A=(\ker A^{*})^{\perp}=(\ker A)^{\perp}=W^{\perp}\,.$$
\end{proof}

\begin{lemma}\label{lem:red}
	Assume \textup{(H1)} and \textup{(H2)}. Then:
	\begin{enumerate}
		\item $0$ is a semisimple eigenvalue of $A$, i.e.\ $\ker A^{2}=\ker A$.
		In fact, independently of \textup{(H2)}, there is a linear isomorphism
		$$\ker A^{2}/\ker A\cong W\cap W^{\perp}\,,$$ so semisimplicity of $0$ is
		\emph{equivalent} to \textup{(H1)}.
		\item $V=W\oplus W^{\perp}$ and this decomposition is $A$-invariant;
		$A':=A|_{W^{\perp}}$ is invertible with
		$$\Spec(A')=\Spec(A)\setminus\{0\}\subseteq\C\setminus\R\,;$$
		$g':=g|_{W^{\perp}}$ is nondegenerate and $A'$ is $g'$-antiadjoint.
		\item The projection $\pi\colon V\to V/W$ restricts to a linear isomorphism
		$$\pi|_{W^{\perp}}\colon W^{\perp}\to V/W$$ intertwining $A'$ with the
		operator $\bar A$ induced by $A$ on $V/W$. The reduced form
		$\bar\omega(\bar v,\bar w):=\omega(v,w)$ is a well-defined
		\emph{nondegenerate} symplectic form on $V/W$, and
		$\bar\omega(\pi u,\pi u')=g'(u,A'u')$ for $u,u'\in W^{\perp}$.
		\item \textup{(H1)}\,$\wedge$\,\textup{(H2)} $\iff$
		$\bigl[\,W\neq\{0\}$ and $\Spec(\bar A)\cap\R=\varnothing\,\bigr]$.
	\end{enumerate}
\end{lemma}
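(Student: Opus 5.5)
The plan is to prove the four items in order, using Lemma \ref{lem:kerim} throughout: $A^{*}=-A$, $\ker A=W$ and $\im A=W^{\perp}$.

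\textbf{Item 1.} I would construct the isomorphism explicitly. Define $\Phi\colon \ker A^{2}\to W\cap W^{\perp}$ by $\Phi(v)=Av$. The map lands where claimed: $A(Av)=0$ gives $Av\in\ker A=W$, and $Av\in\im A=W^{\perp}$. Its kernel is $\ker A\cap\ker A^{2}=\ker A$. For surjectivity, take $w\in W\cap W^{\perp}$. Since $w\in\im A$, we can write $w=Av$, and then $A^{2}v=Aw=0$ because $w\in W=\ker A$. So $\Phi$ descends to an isomorphism $\ker A^{2}/\ker A\cong W\cap W^{\perp}$. This step uses neither (H1) nor (H2), and semisimplicity of $0$ is therefore equivalent to (H1).

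\textbf{Item 2.} Nondegeneracy of $g$ gives $\dim W+\dim W^{\perp}=\dim V$. Together with (H1) this yields $V=W\oplus W^{\perp}$, and $g'=g|_{W^{\perp}}$ is nondegenerate since its radical lies in $W\cap W^{\perp}=\{0\}$. Invariance is immediate: $A W=0$, and $A W^{\perp}\subseteq\im A=W^{\perp}$.

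For invertibility of $A'$, note $\ker A'=W\cap W^{\perp}=0$. In the block decomposition $A=0\oplus A'$, so $\Spec(A')\subseteq\Spec(A)$ and $\Spec(A)=\{0\}\cup\Spec(A')$ with $0\notin\Spec(A')$. Hence $\Spec(A')=\Spec(A)\setminus\{0\}$, which avoids $\R$ by (H2). Antiadjointness of $A'$ with respect to $g'$ is inherited by restriction.

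\textbf{Item 3.} The map $\pi|_{W^{\perp}}$ is injective, since $W\cap W^{\perp}=0$, and the dimensions match, so it is an isomorphism. The operator $\bar A$ is well defined because $AW\subseteq W$, and $\pi A=\bar A\pi$ gives the intertwining. The form $\bar\omega$ is well defined because $W$ is the radical of $\omega$. For $u,u'\in W^{\perp}$ we have $\bar\omega(\pi u,\pi u')=g(u,Au')=g'(u,A'u')$. Nondegeneracy then follows from nondegeneracy of $g'$ and invertibility of $A'$.

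\textbf{Item 4.} For ($\Rightarrow$): $\bar A$ is conjugate to $A'$, so $\Spec(\bar A)\cap\R=\varnothing$ by item 2. Also $W\neq0$, since otherwise $0\notin\Spec(A)$, contradicting (H2), which asserts $\Spec(A)\cap\R=\{0\}$.

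For ($\Leftarrow$), which I expect to be the main subtle point, I would show that $0\notin\Spec(\bar A)$ forces semisimplicity. Take $v\in\ker A^{2}$. Then $\bar A(\pi(Av))=\pi(A^{2}v)=0$, and also $\bar A\pi v=\pi(Av)=0$ because $Av\in W$. Invertibility of $\bar A$ then gives $\pi v=0$, i.e.\ $v\in W=\ker A$. By item 1 this yields (H1).

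Then $V=W\oplus W^{\perp}$, and $A'$ is conjugate to $\bar A$ via $\pi|_{W^{\perp}}$, so $\Spec(A)=\{0\}\cup\Spec(\bar A)$, with $0$ present because $W\neq0$. This gives $\Spec(A)\cap\R=\{0\}$, which is (H2).
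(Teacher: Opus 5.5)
Your proof is correct and follows essentially the same route as the paper for items 1–3 (the same map $v\mapsto Av$ for the isomorphism $\ker A^{2}/\ker A\cong W\cap W^{\perp}$, the same dimension count, and the same block/characteristic-polynomial argument for $\Spec(A')$). The only real divergence is item 4. The paper uses the factorization $p_A(\lambda)=\lambda^{\dim W}p_{\bar A}(\lambda)$ with respect to an arbitrary complement and compares the algebraic multiplicity of $0$ with $\dim\ker A$. You instead argue directly that invertibility of $\bar A$ forces $\ker A^{2}=\ker A$, and then transfer the spectrum through the conjugacy of item 3, which is available once (H1) holds. Your route is equally valid, and it also makes explicit why $W\neq\{0\}$ is needed, which the paper leaves implicit.
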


\begin{proof}
	\emph{(1).} Define $$\varphi\colon\ker A^{2}\to\ker A\cap\im A$$ by
	$\varphi(x):=Ax$; it is well defined because $A(Ax)=0$, its kernel is
	$\ker A$, and it is surjective: if $y\in\ker A\cap\im A$, say $y=Ax$, then
	$A^{2}x=Ay=0$, so $x\in\ker A^{2}$ and $\varphi(x)=y$. By
	Lemma~\ref{lem:kerim}, $$\ker A\cap\im A=W\cap W^{\perp},$$ whence
	$$\ker A^{2}/\ker A\cong W\cap W^{\perp}.$$ Thus \textup{(H1)} is equivalent to
	$\ker A^{2}=\ker A$. Finally, $\ker A^{2}=\ker A$ implies
	$\ker A^{m}=\ker A$ for all $m\geq1$ by induction. %if $x\in\ker A^{m+1}$ then	$$A^{m-1}x\in\ker A^{2}=\ker A.$$ so $x\in\ker A^{m}=\ker A$. Hence the	generalized $0$-eigenspace equals $\ker A$, i.e.\ $0$ is semisimple.
	
	\emph{(2).} Nondegeneracy of $g$ gives $\dim W^{\perp}=\dim V-\dim W$, so
	\textup{(H1)} yields $V=W\oplus W^{\perp}$. Then, we check the invariance: $A(W)=\{0\}\subseteq W$
	by Lemma~\ref{lem:kerim}; and for $u\in W^{\perp}$, $w\in W$ we get
	$$g(Au,w)=-g(u,Aw)=0,$$ so $A(W^{\perp})\subseteq W^{\perp}$. 
	
	The radical of
	$g'$ is $$W^{\perp}\cap(W^{\perp})^{\perp}=W^{\perp}\cap W=\{0\}$$ (using
	$(W^{\perp})^{\perp}=W$), so $g'$ is nondegenerate, and antiadjointness of
	$A'$ is inherited from $A^{*}=-A$. Next,
	$$\ker A'=\ker A\cap W^{\perp}=W\cap W^{\perp}=\{0\}\,,$$ so $A'$ is invertible.
	In a basis adapted to $V=W\oplus W^{\perp}$ the operator $A$ is block diagonal
	with blocks $0$ and $A'$, so the characteristic polynomials satisfy
	$$p_{A}(\lambda)=\lambda^{\dim W}p_{A'}(\lambda);$$ since $0\notin\Spec(A')$,
	this gives $\Spec(A')=\Spec(A)\setminus\{0\}$, which avoids $\R$ by
	\textup{(H2)}.
	
	\emph{(3).} $\pi|_{W^{\perp}}$ has kernel $W\cap W^{\perp}=\{0\}$ and the
	dimensions agree, so it is an isomorphism; the intertwining relation
	$\pi\circ A'=\bar A\circ\pi|_{W^{\perp}}$ is immediate. The form $\bar\omega$
	is well defined because $W=\rad\omega$, and it is nondegenerate by
	construction. For $u,u'\in W^{\perp}$,
	$$\bar\omega(\pi u,\pi u')=\omega(u,u')=g(u,Au')=g'(u,A'u').$$
	
	\emph{(4).} With respect to \emph{any} complement of $W$ the operator $A$ is
	block triangular with vanishing block on $W$, so
	$$p_{A}(\lambda)=\lambda^{\dim W}p_{\bar A}(\lambda).$$ Hence
	$0\in\Spec(\bar A)$ iff the algebraic multiplicity of $0$ in $A$ exceeds the dimension
	$\dim W=\dim\ker A$, i.e.\ iff $0$ is not semisimple, i.e.\ (by (1)) iff
	\textup{(H1)} fails; and the nonzero real eigenvalues of $\bar A$ are exactly
	those of $A$. The claimed equivalence follows. %(the condition $W\neq\{0\}$ accounting for the requirement $0\in\Spec(A)$ contained in \textup{(H2)}).
\end{proof}

Now, we would like to adapt the proof of \cite[Proposition 3.4, Lecture 1]{w}, see also \cite{ms}, to the Pseudo-Euclidean setting. Thus, we need to review the property of the holomorphic functional calculus.

\begin{lemma}\label{lem:calc}
	Let $T\in\End(V_{\C})$, let $U\subseteq\C$ be open with $\Spec(T)\subseteq U$,
	and let $\Gamma\subset U\setminus\Spec(T)$ be a finite union of positively
	oriented contours winding once around every point of $\Spec(T)$ and zero
	times around every point of $\C\setminus U$. For $\varphi$ holomorphic on $U$
	set
	\[
	\varphi(T):=\frac{1}{2\pi i}\oint_{\Gamma}\varphi(z)\,(z\Id-T)^{-1}\,dz .
	\]
	Then:
	\begin{enumerate}
		\item $\varphi\mapsto\varphi(T)$ is a unital algebra homomorphism extending
		the polynomial calculus; in particular every identity between
		holomorphic functions on $U$ passes to the corresponding operators,
		and $\varphi(T)$ commutes with every operator commuting with $T$;
		\item  $S\,\varphi(T)\,S^{-1}=\varphi(STS^{-1})$ for
		every invertible $S$;
		\item if $b$ is a nondegenerate \emph{bilinear} form on
		$V_{\C}$ and $T^{\star}$ denotes the $b$-adjoint, then
		$\varphi(T)^{\star}=\varphi(T^{\star})$;
		\item if $T$ is real, $U$ is stable under conjugation and
		$\varphi(\bar z)=\overline{\varphi(z)}$, then $\varphi(T)$ is real;
		\item if
		$\Spec(T)=S_{1}\sqcup\dots\sqcup S_{k}$ is a partition into relatively
		open closed subsets, with Riesz projections
		$P_{j}$, and if
		$\varphi\equiv c_{j}$ on a neighborhood of $S_{j}$, then
		$\varphi(T)=\sum_{j}c_{j}P_{j}$.
	\end{enumerate}
\end{lemma}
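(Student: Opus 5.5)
All five properties are standard facts about the Riesz--Dunford calculus on the finite-dimensional space $V_{\C}$, and the plan is to derive each of them from the resolvent formula together with Cauchy's theorem. We first note that $\varphi(T)$ does not depend on the choice of $\Gamma$: two admissible contours are homologous in $U\setminus\Spec(T)$, and $z\mapsto\varphi(z)(z\Id-T)^{-1}$ is holomorphic there.

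For (1), linearity in $\varphi$ is immediate. To see that the calculus extends the polynomial one, we first check that $\varphi(z)=z^{n}$ gives $T^{n}$. We do this on a large circle $|z|=R$, using the Neumann series $(z\Id-T)^{-1}=\sum_{k\ge0}T^{k}z^{-k-1}$ and integrating term by term.

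Multiplicativity is the main step, and we follow the usual two-contour argument. Choose $\Gamma'$ enclosing $\Gamma$ and still admissible. Write $\varphi(T)\psi(T)$ as a double integral and apply the resolvent identity
\[
(z\Id-T)^{-1}(w\Id-T)^{-1}=\frac{(z\Id-T)^{-1}-(w\Id-T)^{-1}}{w-z}.
\]
For $z\in\Gamma$, $w\in\Gamma'$, Cauchy's formula gives $\frac1{2\pi i}\oint_{\Gamma'}\frac{\psi(w)}{w-z}dw=\psi(z)$. For $w\in\Gamma'$, on the other hand, $\oint_{\Gamma}\frac{\varphi(z)}{w-z}dz=0$. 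Together these yield $(\varphi\psi)(T)$. Unitality is the case $n=0$ above.

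Commutation with any $S$ satisfying $ST=TS$ follows because $S$ commutes with every resolvent $(z\Id-T)^{-1}$. Finally, since the map is an algebra homomorphism, every identity between holomorphic functions on $U$ transfers to the corresponding operators.

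Property (2) comes from $S(z\Id-T)^{-1}S^{-1}=(z\Id-STS^{-1})^{-1}$, which holds because $\Spec(STS^{-1})=\Spec(T)$ and so the same $\Gamma$ is admissible. For (3), since $b$ is bilinear rather than sesquilinear, adjunction is $\C$-linear, and $(z\Id-T)^{-1\,\star}=(z\Id-T^{\star})^{-1}$ holds with the same $z$, not $\bar z$. Moreover $\Spec(T^{\star})=\Spec(T)$, since $T^{\star}=B^{-1}T^{t}B$ for the Gram matrix $B$. Integrating then gives the claim.

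For (4), first symmetrize $\Gamma$ under conjugation; this is possible because $U$ and $\Spec(T)$ are conjugation-stable. Then compute $\sigma\varphi(T)\sigma$. Conjugation turns $\frac{1}{2\pi i}dz$ along $\Gamma$ into the same expression along $\bar\Gamma$ with reversed orientation, and the factor $\overline{1/i}$ exactly compensates this reversal. Combining this with $\sigma(z\Id-T)^{-1}\sigma=(\bar z\Id-T)^{-1}$ and $\overline{\varphi(z)}=\varphi(\bar z)$, the integral is reproduced.

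For (5), choose $\Gamma=\bigsqcup_j\Gamma_j$ with $\Gamma_j$ enclosing only $S_j$, inside the neighborhood where $\varphi\equiv c_j$. Then $\varphi(T)=\sum_j c_j\frac1{2\pi i}\oint_{\Gamma_j}(z\Id-T)^{-1}dz=\sum_jc_jP_j$ by the definition of the Riesz projections.

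Everything here is routine. The only place requiring care is the multiplicativity in (1), specifically the nesting of the contours and Fubini; the orientation bookkeeping in (4) is a minor second point.
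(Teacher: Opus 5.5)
Your proposal is correct and follows the paper's route. The paper cites the standard Riesz--Dunford theory for (1), (2) and (5), proves (3) from $b\bigl((z\Id-T)^{-1}v,w\bigr)=b\bigl(v,(z\Id-T^{\star})^{-1}w\bigr)$ together with $\Spec(T^{\star})=\Spec(T)$, and proves (4) by conjugating the integral and substituting $z\mapsto\bar z$, exactly as you do; the only cosmetic difference is that in (4) the paper does not symmetrize $\Gamma$ but records the general identity $\sigma\varphi(T)\sigma=\varphi_{\dagger}(T)$ with $\varphi_{\dagger}(z)=\overline{\varphi(\bar z)}$, which is equivalent to your argument by contour independence.
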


\begin{proof}
	(1), (2) and (5) are the standard theory; see e.g.\ \cite[Ch.~VII]{DS} or \cite[Ch.~1]{Hig}. For (3), note that
	$$b\bigl((z\Id-T)^{-1}v,w\bigr)=b\bigl(v,(z\Id-T^{\star})^{-1}w\bigr)\,,$$ since
	$$b\bigl((z\Id-T)x,y\bigr)=b\bigl(x,(z\Id-T^{\star})y\bigr)$$ %(no conjugationvoccurs: $b$ is bilinear); 
	integrating over $\Gamma$, which is admissible for
	$T^{\star}$ as well because $$\Spec(T^{\star})=\Spec(T),$$ gives the claim. For
	(4), $\sigma T\sigma=T$ and conjugate-linearity of $\sigma$ give
	$$\sigma(z\Id-T)^{-1}\sigma=(\bar z\Id-T)^{-1}\,;$$ conjugating the integral and
	substituting $z\mapsto\bar z$ (which reverses the orientation of the contour
	and conjugates the factor $(2\pi i)^{-1}$) yields
	$\sigma\varphi(T)\sigma=\varphi_{\dagger}(T)$ with
	$\varphi_{\dagger}(z):=\overline{\varphi(\bar z)}$; hence $\varphi(T)$ is real
	whenever $\varphi_{\dagger}=\varphi$.
\end{proof}

Eventually, we are ready to state the main theorem of this section. First, we state the following simple Lemma which collects some properties of the function $f(z):=z\,(-z^{2})^{-1/2}$. It will allow us to define $A'\,(-A'^{2})^{-1/2}$ as in the standard Euclidean case, see \cite{w} or \cite{ms}.

\begin{lemma}\label{lem:f}
	Let $f(z):=z\,(-z^{2})^{-1/2}$, where $(\,\cdot\,)^{1/2}$ denotes the
	principal branch of the square root, holomorphic on
	$\C\setminus(-\infty,0]$. Then:
	\begin{enumerate}
		\item $f$ is holomorphic on $\C\setminus\R$, and this domain is maximal:
		$f$ does not extend holomorphically (nor continuously) across any
		real point;
		\item $f(z)^{2}=-1$ and $f(-z)=-f(z)$ for all $z\in\C\setminus\R$;
		\item $f(\bar z)=\overline{f(z)}$;
		\item $f$ is locally constant: $f\equiv i$ on $\{\operatorname{Im}z>0\}$ and
		$f\equiv-i$ on $\{\operatorname{Im}z<0\}$.
	\end{enumerate}
\end{lemma}

%\begin{proof}	For $z\in\C\setminus\R$ we have $z^{2}\in\C\setminus[0,+\infty)$, hence	$-z^{2}\in\C\setminus(-\infty,0]$, and the principal square root of $-z^{2}$	is holomorphic and nonvanishing there; this proves the first part of (1).	Then $f(z)^{2}=z^{2}/(-z^{2})=-1$ and $f(-z)=(-z)(-z^{2})^{-1/2}=-f(z)$,	which is (2). Since $-z^{2}\in\C\setminus(-\infty,0]$, the principal branch	satisfies $\overline{(-z^{2})^{1/2}}=(-\bar z^{2})^{1/2}$, whence	$f(\bar z)=\overline{f(z)}$, i.e.\ (3). By (2) and continuity, $f$ takes	values in $\{\pm i\}$ and is therefore constant on each of the two connected	half-planes; evaluating $f(i)=i/\sqrt{1}=i$ and $f(-i)=-i$ gives (4).	Maximality in (1): by (3)--(4) the boundary values from the two sides of $\R$	are $i$ and $-i$, so no continuous extension across $\R$ exists.\end{proof}

\begin{theorem}[Canonical complex structure on $V/W$]\label{thm:main}
	Assume \textup{(H1)} and \textup{(H2)}, and let $A'$, $g'$, $\bar A$,
	$\bar\omega$ be as in Lemma~\ref{lem:red}. Set
	\[
	J:=f(A')=A'\,(-A'^{2})^{-1/2}\in\End(W^{\perp}).
	\]
	Then $J$ is well defined and real, and:
	\begin{enumerate}
		\item $J^{2}=-\Id$; thus $J$ is a complex structure on
		$W^{\perp}\cong V/W$;
		\item $J^{*}=-J$ with respect to $g'$; in particular $J\in\Ort(g')$;
		\item $J$ preserves the reduced symplectic form:
		$\bar\omega$ transported to $W^{\perp}$ satisfies
		$g'(Ju,A'Ju')=g'(u,A'u')$ for all $u,u'\in W^{\perp}$, i.e.\
		$J\in\Sp(\bar\omega)$ under the identification of
		Lemma~\ref{lem:red}(3);
		\item with $R:=(-A'^{2})^{1/2}$ \textup(principal branch\textup), the
		operator $R$ is real, $g'$-selfadjoint, invertible, commutes with
		$A'$ and $J$, and
		\[
		A'=JR=RJ
		\]
		\textup(a polar-type decomposition of $A'$\textup);
		\item the bilinear form
		\[
		G(u,u'):=\bar\omega(Ju,u')=g'(Ru,u'),\qquad u,u'\in W^{\perp},
		\]
		is symmetric, nondegenerate and $J$-invariant, and
		$\sgn G=\sgn g'$; in particular $G$ is positive definite if and only
		if $g|_{W^{\perp}}$ is;
		\item $J=i\,(P_{+}-P_{-})$, where $P_{\pm}$ are the Riesz projections of
		$A'$ associated with the spectral subsets
		$\{\lambda\in\Spec(A'):\pm\operatorname{Im}\lambda>0\}$.
		Consequently the $(+i)$-eigenspace of $J$ in $(W^{\perp})_{\C}$ is
		\[
		E_{+}:=\bigoplus_{\substack{\lambda\in\Spec(A')\\ \operatorname{Im}\lambda>0}}
		\ker\bigl(A'-\lambda\bigr)^{\dim V},
		\]
		and $J$ is the unique real complex structure on $W^{\perp}$ with this
		$(+i)$-eigenspace.
	\end{enumerate}
	All of the above transports to $V/W$ through $\pi|_{W^{\perp}}$; the
	resulting complex structure on $(V/W,\bar\omega)$ is $\bar J=f(\bar A)$, it
	satisfies $\bar\omega(\bar J\cdot,\bar J\cdot)=\bar\omega$, and
	$\bar G:=\bar\omega(\bar J\cdot,\cdot)$ is a nondegenerate symmetric
	$\bar J$-invariant form with $\sgn\bar G=\sgn\bigl(g|_{W^{\perp}}\bigr)$.
\end{theorem}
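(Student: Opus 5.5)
The argument is holomorphic functional calculus (Lemma~\ref{lem:calc}) applied to $A'$, using the properties of $f$ in Lemma~\ref{lem:f}. By Lemma~\ref{lem:red}(2), $\Spec(A')\subseteq\C\setminus\R$, so take $U=\C\setminus\R$. This is conjugation-stable, and both $f$ and $r(z):=(-z^2)^{1/2}$ (principal branch) are holomorphic on $U$: for $z\notin\R$ one has $-z^2\notin(-\infty,0]$. Hence $J=f(A')$ and $R=r(A')$ are well defined. Since $f(\bar z)=\overline{f(z)}$ and likewise $r(\bar z)=\overline{r(z)}$, and $A'$ is real, Lemma~\ref{lem:calc}(4) shows that $J$ and $R$ are real.

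The algebraic identities come from Lemma~\ref{lem:calc}(1). On $U$ we have $f^2=-1$, $f(z)=z/r(z)$, $r$ nonvanishing, and $r(z)^2=-z^2$. These give $J^2=-\Id$, $R$ invertible, $A'=JR=RJ$, and commutation of $R$ and $J$ with $A'$, which proves (1) and the algebraic part of (4).

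For adjoints, extend $g'$ bilinearly to $(W^\perp)_\C$; it remains nondegenerate by Lemma~\ref{lem:red}(2). The $g'$-adjoint of $A'$ is $-A'$, so Lemma~\ref{lem:calc}(3) gives $J^*=f(-A')=-f(A')=-J$, using oddness of $f$, and $R^*=r(-A')=R$, using evenness of $r$. This proves (2) and the self-adjointness in (4).

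For (3), expand $g'(Ju,A'Ju')=g'(u,J^*A'Ju')=-g'(u,JA'Ju')=-g'(u,A'J^2u')=g'(u,A'u')$, using that $J$ commutes with $A'$. For (5), $G(u,u')=g'(Ju,A'u')=-g'(u,JA'u')=-g'(u,J^2Ru')=g'(u,Ru')$. This is symmetric because $R$ is self-adjoint, nondegenerate because $R$ is invertible, and $J$-invariant because $J\in\Ort(g')$ commutes with $R$.

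The signature statement in (5) is the step I expect to need real care. My plan is a homotopy. Consider $R_t:=r_t(A')$, where $r_t$ is holomorphic on $U$ with $r_0\equiv1$ and $r_1=r$; for example take $r_t(z)=r(z)^t$, defined via the principal logarithm, which is legitimate because $r(z)$ has positive real part on $U$. Each $R_t$ is real, $g'$-selfadjoint (since $r_t$ is even) and invertible (since $r_t$ never vanishes). So $t\mapsto g'(R_t\cdot,\cdot)$ is a continuous path of nondegenerate real symmetric forms joining $g'$ to $G$, and the signature, being locally constant on nondegenerate forms, is constant along it. Hence $\sgn G=\sgn g'$.

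For (6), $f\equiv\pm i$ near the spectral parts in the half-planes $\{\pm\operatorname{Im}z>0\}$, so Lemma~\ref{lem:calc}(5) gives $J=iP_+-iP_-$. Since $P_++P_-=\Id$, the $(+i)$-eigenspace of $J$ is $\im P_+=E_+$, the sum of the generalized eigenspaces of $A'$ with $\operatorname{Im}\lambda>0$. For uniqueness, a real complex structure has $(-i)$-eigenspace equal to $\sigma$ of its $(+i)$-eigenspace, so $E_+$ determines both eigenspaces and hence the operator.

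Finally, transport everything to $V/W$ through the isomorphism $\pi|_{W^\perp}$, which intertwines $A'$ with $\bar A$ by Lemma~\ref{lem:red}(3). Lemma~\ref{lem:calc}(2) then gives $\bar J=f(\bar A)$, and the remaining properties of $\bar J$, $\bar\omega$ and $\bar G$ follow from (1)--(5).
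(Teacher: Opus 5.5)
Your proof is correct. For (1)--(4), (6) and the transport to $V/W$ it follows the paper's argument, with one cosmetic difference. You define $R=r(A')$ with $r(z)=(-z^{2})^{1/2}$, whereas the paper writes $R=\psi(-A'^{2})$ with $\psi(z)=z^{1/2}$. The two agree by the composition rule of the holomorphic calculus. That rule is not listed in Lemma~\ref{lem:calc}, but the paper uses it implicitly when it passes $f(z)(-z^{2})^{1/2}=z$ to operators. Your choice makes $A'=JR$, and $R^{*}=R$ via evenness of $r$, immediate.

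The genuine divergence is the signature claim in (5). The paper takes $S:=R^{1/2}$ (principal branch, legitimate since $\Spec(R)\subseteq\{\operatorname{Re}z>0\}$). By the same calculus $S$ is real, invertible and $g'$-selfadjoint, so $G(u,u')=g'(S^{2}u,u')=g'(Su,Su')$. Thus $G$ is congruent to $g'$, and Sylvester's law finishes the argument purely algebraically.

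Your homotopy $R_t=r(A')^{t}$ is also valid. Each $R_t$ is real, selfadjoint and invertible for the reasons you give. Continuity in $t$ follows from the contour representation, since $(t,z)\mapsto r_t(z)$ is jointly continuous on $[0,1]\times\Gamma$; you should state this explicitly. The route does, however, bring in a topological input, namely local constancy of the signature on nondegenerate symmetric forms, that is unnecessary here. Your own family at $t=1/2$ is exactly the paper's $S$, and $g'(R\,\cdot,\cdot)=g'(R_{1/2}\cdot,R_{1/2}\cdot)$ gives the congruence in one line. The only extra the homotopy buys is a continuous path of $J$-invariant nondegenerate forms from $g'$ to $G$, since each $R_t$ commutes with $J\in\Ort(g')$; nothing in the paper needs this.
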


\begin{proof}
	By Lemma~\ref{lem:red}(2), $\Spec(A')\subseteq\C\setminus\R$, and $\C\setminus\R$ is the domain of $f$ (Lemma~\ref{lem:f}(1)). Hence $J=f(A')$ is well defined. Since $A'$ is real, the domain $\C\setminus\R$ is stable under conjugation and
	$f(\bar z)=\overline{f(z)}$ (Lemma~\ref{lem:f}(3)), then $J$ is real by	Lemma~\ref{lem:calc}(4).
	
	\emph{(1).} By Lemma~\ref{lem:calc}(1) and $f^{2}\equiv-1$
	(Lemma~\ref{lem:f}(2)), we have $$J^{2}=(f^{2})(A')=-\Id.$$
	
	\emph{(2).} By Lemma~\ref{lem:calc}(3) applied to $b=g'$ (extended
	bilinearly) and by $A'^{*}=-A'$,
	\[
	J^{*}=f(A'^{*})=f(-A')=-f(A')=-J,
	\]
	where the middle equality uses that $f$ is odd (Lemma~\ref{lem:f}(2)); note
	that $$\Spec(-A')=-\Spec(A')\subseteq\C\setminus\R\,,$$ so $f(-A')$ is defined.
	Then $J^{*}J=-J^{2}=\Id$, i.e.\ $J\in\Ort(g')$.
	
	\emph{(3).} $J$ commutes with $A'$ by Lemma~\ref{lem:calc}(1). Hence, for
	$u,u'\in W^{\perp}$,
	\[
	g'(Ju,A'Ju')=g'(Ju,JA'u')=g'(u,J^{*}JA'u')=g'(u,A'u'),
	\]
	which, by Lemma~\ref{lem:red}(3), is exactly the statement
	$\bar\omega(\bar J\bar u,\bar J\bar u')=\bar\omega(\bar u,\bar u')$.
	
	\emph{(4).} For $\lambda\in\Spec(A')\subseteq\C\setminus\R$ one has
	$$-\lambda^{2}\in(-\infty,0]\iff\lambda^{2}\in[0,+\infty)\iff\lambda\in\R\,,$$
	which is excluded; hence
	$$\Spec(-A'^{2})=\{-\lambda^{2}:\lambda\in\Spec(A')\}$$ avoids $(-\infty,0]$
	and the principal square root $\psi(z)=z^{1/2}$ is holomorphic on a
	conjugation-stable neighborhood of it. Thus $R=\psi(-A'^{2})$ is defined,
	invertible ($0\notin\Spec(-A'^{2})$, and $\psi$ never vanishes there), real
	(Lemma~\ref{lem:calc}(4), since $\psi(\bar z)=\overline{\psi(z)}$ on
	$\C\setminus(-\infty,0]$), and $g'$-selfadjoint:
	$$R^{*}=\psi\bigl((-A'^{2})^{*}\bigr)=\psi\bigl(-(A'^{*})^{2}\bigr)
	=\psi(-A'^{2})=R$$ by Lemma~\ref{lem:calc}(3). It commutes with $A'$ and $J$	by Lemma~\ref{lem:calc}(1). 
	
	Finally, the identity $f(z)\,(-z^{2})^{1/2}=z$, valid on $\C\setminus\R$, passes to operators:
	$JR=A'$, and $RJ=JR$ by commutation.
	
	\emph{(5).} Using $$g'(Ju,x)=g'(u,J^{*}x)=-g'(u,Jx),$$ the intertwining
	$JA'=A'J$ and $A'J=RJ^{2}=-R$ from (4):
	\begin{align*}
	G(u,u')&=\bar\omega(Ju,u')=g'(Ju,A'u')=-g'(u,JA'u') \\ &=-g'(u,A'Ju')
	=g'(u,Ru')=g'(Ru,u'),
	\end{align*}
	the last step by $R^{*}=R$. Symmetry follows from
	$$g'(Ru,u')=g'(u,Ru')=g'(Ru',u);$$ nondegeneracy from invertibility of $R$;
	$J$-invariance from
	$$G(Ju,Ju')=g'(RJu,Ju')=g'(JRu,Ju')=g'(Ru,J^{*}Ju')=G(u,u').$$
	{For the signature, let $S:=R^{1/2}$ be the square root of $R$ defined by the
		holomorphic functional calculus with the principal branch: this is legitimate
		since $\Spec(A')\cap\R=\emptyset$ gives
		$$\Spec(-A'^{2})\subseteq\C\setminus(-\infty,0],$$ whence
		$\Spec(R)\subseteq\{\operatorname{Re}z>0\}$. As for $R$, the operator $S$ is
		real, invertible and $g'$-selfadjoint by Lemma~\ref{lem:calc}(1),(3). Hence
		\[
		G(u,u')=g'(Ru,u')=g'(S^{2}u,u')=g'(Su,Su') ,
		\]
		so $G$ is congruent to $g'$ and $\sgn G=\sgn g'$ by Sylvester's law of inertia.}
	
	\emph{(6).} The partition of $\Spec(A')$ into its upper and lower parts is a
	partition into relatively open closed subsets, and $f$ is locally constant with values $i$, $-i$ on the
	corresponding half-planes (Lemma~\ref{lem:f}(4)). Lemma~\ref{lem:calc}(5)
	gives $J=iP_{+}-iP_{-}$ with $P_{+}+P_{-}=\Id$. The image of $P_{\pm}$ is the
	sum of the generalized eigenspaces with $\pm\operatorname{Im}\lambda>0$, so
	for $v=P_{+}v+P_{-}v$ one has $$Jv=iv\iff P_{-}v=0\iff v\in\im P_{+}=E_{+}\,.$$
	{Finally,} we note that a real complex structure $J_{0}$ on $W^{\perp}$ is determined by
	its $(+i)$-eigenspace $E\subseteq(W^{\perp})_{\C}$, because its
	$(-i)$-eigenspace is then $\sigma E$ and $(W^{\perp})_{\C}=E\oplus\sigma E$.
\end{proof}

\begin{remark}[The Euclidean case]\label{rem:euclid}
	If $g$ is positive definite, then $A$ is skew-symmetric, hence normal, so
	$\Spec(A)\subseteq i\R$ and $A$ is semisimple; thus \textup{(H1)} is
	automatic and \textup{(H2)} holds whenever $W\neq\{0\}$. Moreover $g'>0$ implies $G>0$ by Theorem~\ref{thm:main}(5): one recovers the classical statement that every
	symplectic vector space, together with an inner product, carries a canonical
	\emph{compatible} complex structure. In this case $A'^{*}=-A'$ gives
	$-A'^{2}=A'A'^{*}$, so $R=(A'A'^{*})^{1/2}$ and $A'=JR$ is precisely the
	polar decomposition of $A'$, matching the classical construction
	$J=(\sqrt{AA^{*}})^{-1}A$ (See \cite[Proposition 3.4, Lecture 1]{w}).
\end{remark}

\subsection{Pseudo-K\"ahler structures on co-adjoint orbits}\label{sec:orbits}

In this section, we apply the linear construction to co-adjoint orbits of Loeb-type Lie groups. Let $G$ be a Lie group with finite-dimensional Lie algebra $\g$, and let
$\kappa_{e}$ be a pseudo-Euclidean product on $\g$ which is $\ad(\g)$-invariant:
\begin{equation}\label{eq:inv}
	\kappa_{e}([\xi,\zeta],\eta)+\kappa_{e}(\zeta,[\xi,\eta])=0
	\qquad\forall\,\xi,\zeta,\eta\in\g .
\end{equation}
We assume in addition that $\kappa_{e}$ is $\Ad(G)$-invariant; if $G$ is
connected this follows from \eqref{eq:inv} by differentiation. We use
$\kappa_{e}$ to identify $\g^{*}\cong\g$; under this identification the
coadjoint action corresponds to the adjoint action and coadjoint orbits to
adjoint orbits. Orthogonals $(\cdot)^{\perp}$ and adjoints are taken with
respect to $\kappa_{e}$.

Fix $\mu\in\g$, $\mu\neq0$, and let
$\mathcal O:=\mathcal O_{\mu}=\Ad(G)\mu\subseteq\g$ be its orbit, endowed
with the smooth structure of $G/G_{\mu}$, where
$G_{\mu}=\{a\in G:\Ad_{a}\mu=\mu\}$ is the stabilizer, a closed subgroup with
Lie algebra $\g_{\mu}=\ker\ad_{\mu}$. For $\nu\in\mathcal O$ the tangent
space is
\[
T_{\nu}\mathcal O=\{[\xi,\nu]:\xi\in\g\}=\im\ad_{\nu}\subseteq\g .
\]
The Kirillov--Kostant--Souriau (KKS) form~\cite{Kirillov:1962,Kostant:1970,Souriau:1970} on $\mathcal O$ is, in the
convention fixed above and at the point $\nu$,
\begin{equation}\label{eq:kks}
	\omega_{\nu}\bigl([\xi,\nu],[\eta,\nu]\bigr)=-\kappa_{e}(\nu,[\xi,\eta]),
	\qquad\xi,\eta\in\g ,
\end{equation}
where $[\xi,\nu]$ is the value at $\nu$ of the fundamental vector field of
$\xi$; the form \eqref{eq:kks} is well defined, nondegenerate and closed
\cite{Kir}. It will be convenient to introduce the auxiliary bilinear form on
all of $\g$
\[
\Omega_{\nu}(\xi,\eta):=-\kappa_{e}(\nu,[\xi,\eta]),\qquad\xi,\eta\in\g .
\]

\begin{lemma}\label{lem:orbitalg}
	For every $\nu\in\g$:
	\begin{enumerate}
		\item $\Omega_{\nu}(\xi,\eta)=\kappa_{e}(\xi,[\nu,\eta])
		=\kappa_{e}(\xi,\ad_{\nu}\eta)$. Hence, taking
		$$(V,\omega,g)=(\g,\Omega_{\nu},\kappa_{e})$$ in
		Section~\ref{sec:linalg}, the operator defined by \eqref{eq:defA} is
		\[
		A=\ad_{\nu} .
		\]
		\item $\ad_{\nu}$ is $\kappa_{e}$-antiadjoint, and
		\[
		W=\rad\Omega_{\nu}=\ker\ad_{\nu}=\g_{\nu},\qquad
		W^{\perp}=\im\ad_{\nu}=\g_{\nu}^{\perp}=T_{\nu}\mathcal O .
		\]
		\item if $[\xi,\nu]=0$ then
		$\kappa_{e}(\nu,[\xi,\eta])=0$ for all $\eta$; so \eqref{eq:kks}
		depends only on the tangent vectors $[\xi,\nu]$, $[\eta,\nu]$.
		\item the map
		\[
		\beta_{\nu}\colon\g/\g_{\nu}\longrightarrow T_{\nu}\mathcal O,
		\qquad
		\beta_{\nu}(\bar\xi):=[\xi,\nu]=-\ad_{\nu}\xi ,
		\]
		is a linear isomorphism which intertwines the induced operator
		$\bar A_{\nu}$ on $\g/\g_{\nu}$ with
		$A'_{\nu}:=\ad_{\nu}|_{T_{\nu}\mathcal O}$, and which pulls the KKS
		form back to the reduced form:
		$\beta_{\nu}^{*}\omega_{\nu}=\bar\Omega_{\nu}$.
	\end{enumerate}
\end{lemma}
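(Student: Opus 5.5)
The plan is to derive everything from the invariance identity \eqref{eq:inv}, the symmetry of $\kappa_e$ and the antisymmetry of the bracket, and then to use Lemma~\ref{lem:kerim} to identify $W$ and $W^\perp$. For (1) we compute
\[
\Omega_\nu(\xi,\eta)=-\kappa_e(\nu,[\xi,\eta])=\kappa_e(\nu,[\eta,\xi]),
\]
and then apply \eqref{eq:inv} with $\eta$ acting: $\kappa_e([\eta,\nu],\xi)+\kappa_e(\nu,[\eta,\xi])=0$. This gives $\Omega_\nu(\xi,\eta)=-\kappa_e([\eta,\nu],\xi)=\kappa_e(\xi,[\nu,\eta])$. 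Comparing with \eqref{eq:defA}, nondegeneracy of $\kappa_e$ then forces $A=\ad_\nu$.

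For (2), antiadjointness follows either directly from \eqref{eq:inv} with $\xi=\nu$, or from Lemma~\ref{lem:kerim}, which also yields $W=\ker A=\ker\ad_\nu$ and $W^\perp=\im A=\im\ad_\nu$. The remaining identifications are then immediate: $\ker\ad_\nu=\g_\nu$ by definition, $\im\ad_\nu=T_\nu\mathcal O$ by the stated description of the tangent space, and $W^\perp=\g_\nu^\perp$ is simply a renaming.

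For (3), we note that $\kappa_e(\nu,[\xi,\eta])=-\Omega_\nu(\xi,\eta)$ and that, by (2), $\xi\in\ker\ad_\nu=\rad\Omega_\nu$. Bilinearity then shows that \eqref{eq:kks} depends only on the classes of $\xi,\eta$ modulo $\g_\nu$, which are exactly the tangent vectors $[\xi,\nu],[\eta,\nu]$.

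For (4), $\beta_\nu$ is well defined and injective because its kernel is $\g_\nu$, and it is surjective onto $\im\ad_\nu=T_\nu\mathcal O$. The pullback identity is a rewriting of \eqref{eq:kks}: $\omega_\nu(\beta_\nu\bar\xi,\beta_\nu\bar\eta)=\Omega_\nu(\xi,\eta)=\bar\Omega_\nu(\bar\xi,\bar\eta)$. For the intertwining we have $\beta_\nu(\bar A_\nu\bar\xi)=-\ad_\nu(\ad_\nu\xi)=\ad_\nu(\beta_\nu\bar\xi)=A'_\nu\beta_\nu\bar\xi$, using that $\ad_\nu$ preserves $\im\ad_\nu$.

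The only step needing care is the sign bookkeeping in (1) with the convention \eqref{eq:kks}. Everything else is formal, so no real obstacle is expected.
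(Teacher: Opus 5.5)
Your proposal is correct and follows essentially the same route as the paper. You obtain (1) from the invariance identity \eqref{eq:inv}, using one application with $\eta$ acting plus antisymmetry where the paper chains two; (2) from Lemma~\ref{lem:kerim} together with $T_\nu\mathcal O=\im\ad_\nu$; and (4) by the identical computations. The only cosmetic difference is in (3): you deduce $\kappa_e(\nu,[\xi,\eta])=0$ from $\xi\in\rad\Omega_\nu$ via (2), whereas the paper reads it off directly as $-\kappa_e(\nu,[\xi,\eta])=\kappa_e([\xi,\nu],\eta)=0$.
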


\begin{proof}
	\emph{(1).} It follows by the application of \eqref{eq:inv}. %with	$(\xi,\zeta,\eta)\mapsto(\xi,\nu,\eta)$,	$$\kappa_{e}([\xi,\nu],\eta)=-\kappa_{e}(\nu,[\xi,\eta]);$$ with	$(\xi,\zeta,\eta)\mapsto(\nu,\xi,\eta)$,	$\kappa_{e}([\nu,\xi],\eta)=-\kappa_{e}(\xi,[\nu,\eta])$. Hence	\[	\Omega_{\nu}(\xi,\eta)=-\kappa_{e}(\nu,[\xi,\eta])=\kappa_{e}([\xi,\nu],\eta)=-\kappa_{e}([\nu,\xi],\eta)=\kappa_{e}(\xi,[\nu,\eta]) .\]
	
	\emph{(2).} Antiadjointness is Lemma~\ref{lem:kerim} applied to
	$\Omega_{\nu}$; the identifications of $W$ and
	$W^{\perp}$ are Lemma~\ref{lem:kerim}, together with
	$T_{\nu}\mathcal O=\im\ad_{\nu}$.
	
	\emph{(3).} If $[\xi,\nu]=0$, then
	$$-\kappa_{e}(\nu,[\xi,\eta])=\kappa_{e}([\xi,\nu],\eta)=0$$ for every $\eta$.
	
	\emph{(4).} $\beta_{\nu}$ is well defined and injective because
	$\ker\ad_{\nu}=\g_{\nu}$, and surjective onto
	$T_{\nu}\mathcal O=\im\ad_{\nu}$. Furthermore,
	\[
	\beta_{\nu}(\bar A_{\nu}\bar\xi)
	=\beta_{\nu}\bigl(\overline{\ad_{\nu}\xi}\bigr)
	=-\ad_{\nu}(\ad_{\nu}\xi)
	=A'_{\nu}\bigl(-\ad_{\nu}\xi\bigr)
	=A'_{\nu}\beta_{\nu}(\bar\xi).
	\]
	Finally,
	$$\beta_{\nu}^{*}\omega_{\nu}(\bar\xi,\bar\eta)
	=\omega_{\nu}([\xi,\nu],[\eta,\nu])=-\kappa_{e}(\nu,[\xi,\eta])
	=\Omega_{\nu}(\xi,\eta)=\bar\Omega_{\nu}(\bar\xi,\bar\eta)\,.$$
\end{proof}

\medskip
\noindent\textbf{Hypotheses at $\mu$.} We assume the hypotheses of
Section~\ref{sec:linalg} for $A=\ad_{\mu}$:
\begin{itemize}
	\item[\textup{(K1)}] $\g_{\mu}\cap\g_{\mu}^{\perp}=\{0\}$;
	\item[\textup{(K2)}] $\Spec(\ad_{\mu})\cap\R=\{0\}$.
\end{itemize}

\begin{remark}\label{rem:zeroalways}
	Since $[\mu,\mu]=0$, one always has $\mu\in\g_{\mu}$, so
	$\g_{\mu}\neq\{0\}$ and $0\in\Spec(\ad_{\mu})$ for every $\mu\neq0$
	(similarly, the center of $\g$ is contained in every $\g_{\mu}$). Hence in
	the orbit setting the spectral hypothesis necessarily takes the form
	``$\Spec(\ad_{\mu})\cap\R=\{0\}$'', as in \textup{(K2)}. By
	Lemma~\ref{lem:red}(1), hypothesis \textup{(K1)} is equivalent to the
	$\kappa_{e}$-independent condition $\ker\ad_{\mu}^{2}=\ker\ad_{\mu}$.
\end{remark}

\begin{lemma}\label{lem:equiv}
	Let $a\in G$ and $\nu=\Ad_{a}\mu$. Then:
	\begin{enumerate}
		\item $\ad_{\nu}=\Ad_{a}\circ\ad_{\mu}\circ\Ad_{a}^{-1}$;
		\item $\Ad_{a}\in\Ort(\kappa_{e})$, and consequently
		$\g_{\nu}=\Ad_{a}\g_{\mu}$,
		$T_{\nu}\mathcal O=\g_{\nu}^{\perp}=\Ad_{a}\bigl(\g_{\mu}^{\perp}\bigr)$,
		$\Spec(\ad_{\nu})=\Spec(\ad_{\mu})$, and $0$ is semisimple for
		$\ad_{\nu}$ iff it is for $\ad_{\mu}$. In particular \textup{(K1)}
		and \textup{(K2)} hold at every point of $\mathcal O$.
	\end{enumerate}
\end{lemma}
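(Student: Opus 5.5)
The plan is to derive everything from the fact that $\Ad_a$ is a Lie algebra automorphism of $\g$, together with the standing assumption that $\kappa_e$ is $\Ad(G)$-invariant.

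For (1), I would apply $\Ad_a[\xi,\eta]=[\Ad_a\xi,\Ad_a\eta]$ with $\xi=\mu$ and $\eta=\Ad_a^{-1}\zeta$. This gives
\[
\Ad_a\ad_\mu\Ad_a^{-1}\zeta=[\Ad_a\mu,\zeta]=\ad_\nu\zeta
\]
for every $\zeta\in\g$, which is the claimed conjugation identity.

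For (2), the statement $\Ad_a\in\Ort(\kappa_e)$ is just a rephrasing of $\Ad(G)$-invariance of $\kappa_e$. The remaining claims then follow from (1) in order.
\begin{itemize}
\item Conjugation transports kernels, so $\g_\nu=\ker\ad_\nu=\Ad_a\ker\ad_\mu=\Ad_a\g_\mu$.
\item An orthogonal map sends orthogonal complements to orthogonal complements, so $\g_\nu^\perp=\Ad_a(\g_\mu^\perp)$.
\item Lemma \ref{lem:orbitalg}(2) identifies this with $T_\nu\mathcal O$.
\item Similar operators have the same characteristic polynomial, hence $\Spec(\ad_\nu)=\Spec(\ad_\mu)$.
\item For semisimplicity of $0$, I would use $\ker\ad_\nu^{k}=\Ad_a\ker\ad_\mu^{k}$ for $k=1,2$, which follows from (1). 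So $\ker\ad_\nu^2=\ker\ad_\nu$ holds iff $\ker\ad_\mu^2=\ker\ad_\mu$.
\end{itemize}

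To conclude that (K1) holds along the orbit, I would invoke Remark \ref{rem:zeroalways} (equivalently Lemma \ref{lem:red}(1)): (K1) at a point is equivalent to semisimplicity of $0$ there, and this property is invariant. Alternatively one can argue directly: $\g_\nu\cap\g_\nu^\perp=\Ad_a(\g_\mu\cap\g_\mu^\perp)=\{0\}$, since $\Ad_a$ is a bijection. (K2) transfers because the spectrum is invariant.

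There is no real obstacle. The only point needing care is making sure the orthogonal complement commutes with $\Ad_a$, which uses orthogonality of $\Ad_a$ and not merely the fact that it is an automorphism. This is why the standing $\Ad(G)$-invariance assumption (and not just $\ad$-invariance when $G$ is disconnected) is invoked.
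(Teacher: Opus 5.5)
Your proof is correct and follows the paper's argument: (1) comes from $\Ad_a$ being a Lie algebra automorphism, and (2) from $\Ad(G)$-invariance of $\kappa_e$ together with the invariance of kernels, spectra and orthogonal complements under conjugation by an isometry. Your explicit checks, $\ker\ad_\nu^k=\Ad_a\ker\ad_\mu^k$ for semisimplicity and $\g_\nu\cap\g_\nu^{\perp}=\Ad_a(\g_\mu\cap\g_\mu^{\perp})$ for (K1), simply spell out what the paper summarizes as ``conjugation-invariance of kernels, images, spectra and Jordan structure.''
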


\begin{proof}
	(1) is the identity $[\Ad_{a}\mu,x]=\Ad_{a}[\mu,\Ad_{a}^{-1}x]$, valid
	because $\Ad_{a}$ is a Lie algebra automorphism. (2): orthogonality of
	$\Ad_{a}$ is the assumed $\Ad(G)$-invariance of $\kappa_{e}$; the remaining
	statements follow from (1) by conjugation-invariance of kernels, images,
	spectra and Jordan structure, and from orthogonality for the statement about
	$\perp$.
\end{proof}

{The following construction is classical for compact semisimple Lie groups, where co-adjoint orbits carry a natural K\"ahler structure: see \cite[Chapter 8]{Besse}. %and, for an operator-theoretic treatment close in spirit to ours, \cite[\S1]{Rieffel:2009}. Both ingredients in the proof of integrability are classical when taken separately: Step 1 is the primary decomposition with respect to a derivation \cite{Humphreys}, and Step 2 is the classical criterion for the integrability of invariant almost	complex structures on homogeneous spaces, which goes back to \cite{Koszul:1955,Wang:1954}. 
}

\begin{theorem}[Invariant pseudo-K\"ahler structure on the orbit]\label{thm:orbit}
	Assume \textup{(K1)} and \textup{(K2)}. For $\nu\in\mathcal O$ set
	$A'_{\nu}:=\ad_{\nu}|_{T_{\nu}\mathcal O}\in\End(T_{\nu}\mathcal O)$ and
	\[
	J_{\nu}:=f(A'_{\nu})=A'_{\nu}\,\bigl(-A'^{2}_{\nu}\bigr)^{-1/2}.
	\]
	Then:
	\begin{enumerate}
		\item $J=(J_{\nu})_{\nu\in\mathcal O}$ is a well-defined, smooth,
		$G$-invariant almost complex structure on $\mathcal O$:
		$J^{2}=-\Id$ and $J_{\Ad_{a}\nu}=\Ad_{a}\circ J_{\nu}\circ\Ad_{a}^{-1}$
		for all $a\in G$;
		\item $J$ is compatible with the KKS symplectic form in the sense that
		$$\omega(JX,JY)=\omega(X,Y),$$ and
		\[
		\gamma(X,Y):=\omega(JX,Y)
		\]
		defines a $G$-invariant, $J$-invariant pseudo-Riemannian metric on
		$\mathcal O$ of constant signature
		$\sgn\bigl(\kappa_{e}|_{\g_{\mu}^{\perp}}\bigr)$; it is positive
		definite iff $\kappa_{e}|_{\g_{\mu}^{\perp}}$ is positive definite;
		\item $J$ is integrable. Hence $(\mathcal O,J)$ is a complex manifold and
		$(\mathcal O,\omega,J,\gamma)$ is a pseudo-K\"ahler manifold; it is
		K\"ahler iff $\kappa_{e}|_{\g_{\mu}^{\perp}}$ is positive definite;
		\item for each $\nu$, the $(+i)$-eigenspace of
		$(J_{\nu})_{\C}$ is the sum of the generalized eigenspaces of
		$\ad_{\nu}$ in $(T_{\nu}\mathcal O)_{\C}$ with positive imaginary
		part, and $J$ is the unique $G$-invariant almost complex structure
		with this property at one (equivalently, every) point. Moreover
		$J_{\nu}$ depends only on $\ad_{\nu}$: the product $\kappa_{e}$
		enters only through the identification $\g^{*}\cong\g$, the
		hypothesis \textup{(K1)} \textup(itself equivalent to
		$\ker\ad_{\mu}^{2}=\ker\ad_{\mu}$, cf.\
		Remark~\ref{rem:zeroalways}\textup) and the metric $\gamma$.
	\end{enumerate}
\end{theorem}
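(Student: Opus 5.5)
Parts (1), (2) and (4) are pointwise and reduce to Section~\ref{sec:linalg}; only (3) needs a new argument. At a point $\nu\in\mathcal O$ we take $(V,\omega,g)=(\g,\Omega_\nu,\kappa_e)$. By Lemma~\ref{lem:orbitalg} this gives $A=\ad_\nu$, $W=\g_\nu$ and $W^\perp=T_\nu\mathcal O$, and Lemma~\ref{lem:equiv} shows that (K1), (K2) hold at $\nu$. Theorem~\ref{thm:main} then says that $J_\nu=f(A'_\nu)$ is a real complex structure on $T_\nu\mathcal O$. Via $\beta_\nu$ (Lemma~\ref{lem:orbitalg}(4)), $J_\nu$ preserves the KKS form $\omega_\nu$, and $\gamma_\nu=\kappa_e(R_\nu\cdot,\cdot)$ is symmetric, nondegenerate and $J_\nu$-invariant with $\sgn\gamma_\nu=\sgn(\kappa_e|_{T_\nu\mathcal O})$. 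To match the convention $\gamma=\omega(J\cdot,\cdot)$, note $\omega_\nu(X,Y)=\kappa_e(X,A'_\nu Y)$ on $T_\nu\mathcal O$. Indeed, for $X=[\xi,\nu]$ one has $\kappa_e([\xi,\nu],[\nu,\eta'])=-\kappa_e(\nu,[\xi,[\nu,\eta']])$; this is the same linear algebra as in Lemma~\ref{lem:red}(3).

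Equivariance follows from Lemma~\ref{lem:equiv}(1) and Lemma~\ref{lem:calc}(2) with $S=\Ad_a$: we get $J_{\Ad_a\nu}=f(\Ad_aA'_\nu\Ad_a^{-1})=\Ad_aJ_\nu\Ad_a^{-1}$. Since $\Ad_a\in\Ort(\kappa_e)$, the signature is constant along the orbit. For smoothness, write $J=(\Ad_a)_*J_\mu$ via $G/G_\mu$. Alternatively, use the contour integral $\frac{1}{2\pi i}\oint f(z)(z-\ad_\nu)^{-1}dz$ on $\g$ with a contour avoiding $\R$. The spectrum of $\ad_\nu$ is constant along $\mathcal O$, so one fixed contour works for all $\nu$. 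This integral acts on $T_\nu\mathcal O$ as $J_\nu$ and depends smoothly on $\nu$. The contour must enclose the nonreal spectrum and avoid $0$; we take $\varphi=f$ near the nonreal spectrum and $\varphi=0$ near $0$, which gives a smooth family of projector-weighted operators. Part (4) is Theorem~\ref{thm:main}(6) transported to $T_\nu\mathcal O$. Uniqueness among invariant structures holds because an invariant structure is determined by its value at $\mu$, and that value is determined by its $(+i)$-eigenspace.

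The main step is integrability (3). We work at $\mu$, complexify, and set $\mathfrak q:=\g_{\mu,\C}\oplus E_+^\mu$, where $E_+^\mu\subseteq(\g_\mu^\perp)_\C$ is the sum of the generalized eigenspaces $\g_\C^\lambda$ of $\ad_\mu$ with $\operatorname{Im}\lambda>0$. By (K1) and Lemma~\ref{lem:red}(1), $\g_{\mu,\C}=\ker\ad_\mu=\g^0_\C$ is the full generalized $0$-eigenspace. The standard criterion (Koszul, Wang; Frölicher's theorem for homogeneous spaces) says an invariant almost complex structure on $G/G_\mu$ is integrable iff $\mathfrak q$ is a complex subalgebra with $\mathfrak q+\bar{\mathfrak q}=\g_\C$ and $\mathfrak q\cap\bar{\mathfrak q}=\g_{\mu,\C}$. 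The identification of $T_\mu\mathcal O$ with $\g/\g_\mu$ uses $\beta_\mu$, which intertwines the operators, so $\mathfrak q/\g_{\mu,\C}$ is the $+i$-eigenspace up to the sign in $\beta_\mu=-\ad_\mu$. That sign commutes with $\ad_\mu$ and does not change eigenspaces. The two decomposition conditions follow from $\bar{\g^\lambda_\C}=\g^{\bar\lambda}_\C$.

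Subalgebra closure is the crux. Since $\ad_\mu$ is a derivation, $(\ad_\mu-\lambda-\lambda')^n[x,y]=\sum\binom{n}{k}[(\ad_\mu-\lambda)^kx,(\ad_\mu-\lambda')^{n-k}y]$. Hence $[\g^\lambda_\C,\g^{\lambda'}_\C]\subseteq\g^{\lambda+\lambda'}_\C$, where the right side is zero if $\lambda+\lambda'\notin\Spec\ad_\mu$. Imaginary parts add, so brackets of two elements of $\mathfrak q$ land in eigenvalues with $\operatorname{Im}\ge0$. The case $\operatorname{Im}=0$ occurs only when both inputs lie in $\g^0_\C$, and then the sum is exactly $0$: a real nonzero sum would need both imaginary parts zero, i.e.\ both eigenvalues real, hence zero by (K2). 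So $\mathfrak q$ is a subalgebra, and the structure is integrable. The pseudo-K\"ahler conclusion follows because $\omega$ is closed and $\gamma$ is nondegenerate; it is K\"ahler exactly in the positive-definite case by (2).
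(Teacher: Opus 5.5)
Your overall route is the paper's. You apply Theorem~\ref{thm:main} pointwise to $(\g,\Omega_\nu,\kappa_e)$ and transport by $\beta_\nu$, get equivariance from Lemma~\ref{lem:calc}(2) with $S=\Ad_a$, get smoothness from one fixed contour with a symbol vanishing near $0$, and prove integrability by the Leibniz rule for generalized eigenspaces applied to $\mathfrak q=E_0\oplus E_+$. Those parts are fine.

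The step that fails is your explicit description of the forms on $T_\nu\mathcal O$. The identity $\omega_\nu(X,Y)=\kappa_e(X,A'_\nu Y)$ is false in general, and so is $\gamma_\nu=\kappa_e(R_\nu\cdot,\cdot)$. You are conflating two different isomorphisms $\g/\g_\nu\to T_\nu\mathcal O$:
\begin{itemize}
\item Lemma~\ref{lem:red}(3) writes $\bar\Omega_\nu$ as $\kappa_e(\cdot,A'_\nu\cdot)$ through $(\pi|_{W^\perp})^{-1}$;
\item the KKS form corresponds to $\bar\Omega_\nu$ through $\beta_\nu$.
\end{itemize}
These differ, since $\beta_\nu\circ\pi|_{W^\perp}=-A'_\nu$. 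Transporting correctly gives
\[
\omega_\nu(X,Y)=-\kappa_e\bigl(X,A'^{-1}_\nu Y\bigr),\qquad
\gamma_\nu(X,Y)=\kappa_e\bigl(R_\nu^{-1}X,Y\bigr).
\]
Your own displayed computation already shows the discrepancy. Its right-hand side is $\omega_\nu\bigl(X,[[\nu,Y],\nu]\bigr)=\omega_\nu(X,-A'^2_\nu Y)$, which equals $\omega_\nu(X,Y)$ only when $A'^2_\nu=-\Id$. That holds at $\nu=H+E$ in the oscillator, which may be why it looked right. It fails at $\nu=2H+E$: there $\omega_\nu(P,Q)=-\tfrac12$ but $\kappa_e(P,A'_\nu Q)=-2$. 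This agrees with the factors $h_E^{-1}$ in the formulas of Section~\ref{sec:oscillatorexample}, since $2H+E$ corresponds to $h_E=2$.

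The damage is contained, and the conclusions of (2) survive:
\begin{itemize}
\item The operator attached to $(T_\nu\mathcal O,\omega_\nu,\kappa_e)$ by \eqref{eq:defA} is $-A'^{-1}_\nu$. Since $z\mapsto-1/z$ preserves each half-plane, Theorem~\ref{thm:main}(6) gives $f(-A'^{-1}_\nu)=f(A'_\nu)=J_\nu$.
\item $R_\nu^{-1}=S^{-2}$ with $S=R_\nu^{1/2}$ real, invertible and $\kappa_e$-selfadjoint. Hence $\sgn\gamma_\nu=\sgn(\kappa_e|_{T_\nu\mathcal O})$ still follows from Sylvester's law.
\end{itemize}
The cleanest repair is the paper's: write no explicit formula at all. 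Observe that $\beta_\nu^*\gamma_\nu=\bar G_\nu$, and read off symmetry, nondegeneracy, $J$-invariance and signature from the last clause of Theorem~\ref{thm:main}.

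You also skip the final clause of (4), that $J_\nu$ depends only on $\ad_\nu$. It is immediate from $T_\nu\mathcal O=\im\ad_\nu$ and $J_\nu=f(\ad_\nu|_{\im\ad_\nu})$.
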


\begin{proof}
	\emph{(1) and (2)} First, we recall the pointwise construction. Fix $\nu\in\mathcal O$. By
	Lemma~\ref{lem:orbitalg}, the data $(V,\omega,g,A)=(\g,\Omega_{\nu},
	\kappa_{e},\ad_{\nu})$ fit the setup of Section~\ref{sec:linalg} with
	$W=\g_{\nu}$ and $W^{\perp}=T_{\nu}\mathcal O$; by Lemma~\ref{lem:equiv},
	hypotheses \textup{(H1)}--\textup{(H2)} hold at $\nu$. Theorem~\ref{thm:main}
	applied on $V/W=\g/\g_{\nu}$ yields the real complex structure
	$\bar J_{\nu}=f(\bar A_{\nu})$, which preserves the reduced form
	$\bar\Omega_{\nu}$ and for which
	$\bar G_{\nu}:=\bar\Omega_{\nu}(\bar J_{\nu}\cdot,\cdot)$ is symmetric,
	nondegenerate, $\bar J_{\nu}$-invariant, of signature
	$\sgn(\kappa_{e}|_{\g_{\nu}^{\perp}})$.
	
	Now transport through the isomorphism $\beta_{\nu}$ of
	Lemma~\ref{lem:orbitalg}(4). Since $\beta_{\nu}$ intertwines
	$\bar A_{\nu}$ with $A'_{\nu}$, Lemma~\ref{lem:calc}(2) gives
	$$\beta_{\nu}\circ f(\bar A_{\nu})\circ\beta_{\nu}^{-1}=f(A'_{\nu})=J_{\nu}\,;$$
	and since $\beta_{\nu}^{*}\omega_{\nu}=\bar\Omega_{\nu}\,,$ the operator
	$J_{\nu}$ preserves $\omega_{\nu}$, satisfies $J_{\nu}^{2}=-\Id$, and
	$\gamma_{\nu}:=\omega_{\nu}(J_{\nu}\cdot,\cdot)$ corresponds to
	$\bar G_{\nu}$, hence is symmetric, nondegenerate, $J_{\nu}$-invariant, of
	signature $\sgn(\kappa_{e}|_{\g_{\nu}^{\perp}})
	=\sgn(\kappa_{e}|_{\g_{\mu}^{\perp}})$, the last equality because
	$\Ad_{a}$ is a $\kappa_{e}$-isometry carrying $\g_{\mu}^{\perp}$ onto
	$\g_{\nu}^{\perp}$ (Lemma~\ref{lem:equiv}).
	
	Let $\nu=\Ad_{a}\mu$. By Lemma~\ref{lem:equiv}(1) and Lemma~\ref{lem:calc}(2),
	\[
	J_{\Ad_{a}\mu}=f\bigl(\Ad_{a}\,A'_{\mu}\,\Ad_{a}^{-1}\bigr)
	=\Ad_{a}\,f(A'_{\mu})\,\Ad_{a}^{-1}
	=\Ad_{a}\,J_{\mu}\,\Ad_{a}^{-1},
	\]
	where we used that $\Ad_{a}$ maps $T_{\mu}\mathcal O$ onto
	$T_{\nu}\mathcal O$. In particular, for $h\in G_{\mu}$ we get
	$\Ad_{h}J_{\mu}\Ad_{h}^{-1}=J_{\mu}$: the tensor $J_{\mu}$ is
	$G_{\mu}$-invariant, so the family $(J_{\nu})_{\nu}$ is a well-defined
	$G$-invariant tensor field on $\mathcal O\cong G/G_{\mu}$. The same argument
	applies to $\gamma$, whose $G$-invariance also follows from that of $\omega$
	(KKS) and of $J$.
	
	Eventually, we prove the smoothness of $J$. Choose pairwise disjoint open sets
	$U_{0}\ni0$, $$U_{+}\supseteq\Spec(\ad_{\mu})\cap\{\operatorname{Im}z>0\}\,,\quad U_{-}\supseteq\Spec(\ad_{\mu})\cap\{\operatorname{Im}z<0\}\,,$$ and define
	$h\in\mathcal O(U_{0}\cup U_{+}\cup U_{-})$ by $h\equiv0$ on $U_{0}$,
	$h\equiv i$ on $U_{+}$, $h\equiv-i$ on $U_{-}$. By Lemma~\ref{lem:equiv},
	$\Spec(\ad_{\nu})=\Spec(\ad_{\mu})$ for every $\nu\in\mathcal O$, so a single
	finite contour system $$\Gamma\subset(U_{0}\cup U_{+}\cup U_{-})	\setminus\Spec(\ad_{\mu})$$ computes
	\[
	H_{\nu}:=h(\ad_{\nu})
	=\frac{1}{2\pi i}\oint_{\Gamma}h(z)\,(z\Id-\ad_{\nu})^{-1}\,dz
	\in\End(\g)
	\]
	simultaneously for all $\nu\in\mathcal O$. The map $\nu\mapsto\ad_{\nu}$ is
	linear, hence smooth; $\Gamma$ is compact and contained in the resolvent set
	of every $\ad_{\nu}$, and matrix inversion is smooth, so
	$(z,\nu)\mapsto(z\Id-\ad_{\nu})^{-1}$ is smooth on
	$\Gamma\times\mathcal O$ and $\nu\mapsto H_{\nu}$ is a smooth
	$\End(\g)$-valued map. By Lemma~\ref{lem:calc}(5),
	$H_{\nu}=i\bigl(P_{+}(\nu)-P_{-}(\nu)\bigr)$, where $P_{0},P_{\pm}$ are the
	Riesz projections of $\ad_{\nu}$ for the spectral subsets $\{0\}$ and
	$\{\pm\operatorname{Im}>0\}$; hence $H_{\nu}$ vanishes on
	$(\g_{\nu})_{\C}=\im P_{0}(\nu)$ (semisimplicity of $0$,
	Lemma~\ref{lem:red}(1)) and restricts on
	$(T_{\nu}\mathcal O)_{\C}=\im P_{+}(\nu)\oplus\im P_{-}(\nu)$ (the
	complexification of Lemma~\ref{lem:red}(2)) to $f(A'_{\nu})=J_{\nu}$,
	because $h\equiv f$ on $U_{+}\cup U_{-}$. Therefore
	$J=H|_{T\mathcal O}$ is a smooth bundle endomorphism. This proves (1) and
	(2).
	
	\emph{(3) Integrability.} We use the following two ingredients.
	
	\medskip
	\noindent\textbf{Step 1 (Leibniz rule for generalized eigenspaces).}
	\emph{Let $D$ be a derivation of a complex Lie algebra $\mathfrak l$ and let
		$$E_{\lambda}:=\bigcup_{m}\ker(D-\lambda)^{m}$$ denote its generalized
		eigenspaces. Then $[E_{\lambda},E_{\rho}]\subseteq E_{\lambda+\rho}$.}
	
	Indeed, by induction on $m$ one proves
	\[
	(D-\lambda-\rho)^{m}[x,y]
	=\sum_{j=0}^{m}\binom{m}{j}
	\bigl[(D-\lambda)^{j}x,\;(D-\rho)^{m-j}y\bigr]\,.
	\]
	The case $m=0$ is trivial, and applying $D-\lambda-\rho$ to a summand and
	using $D[a,b]=[Da,b]+[a,Db]$ gives
	\begin{multline*}
	(D-\lambda-\rho)\bigl[(D-\lambda)^{j}x,(D-\rho)^{m-j}y\bigr]
	\\=\bigl[(D-\lambda)^{j+1}x,(D-\rho)^{m-j}y\bigr]
	+\bigl[(D-\lambda)^{j}x,(D-\rho)^{m-j+1}y\bigr],
	\end{multline*}
	so the inductive step follows from Pascal's rule. If now
	$(D-\lambda)^{p}x=0$ and $(D-\rho)^{q}y=0$, take $m=p+q-1$: in every summand
	either $j\geq p$ or $m-j\geq q$, so $(D-\lambda-\rho)^{p+q-1}[x,y]=0$.
	
	\medskip
	\noindent\textbf{Step 2 (integrability criterion).}
	\emph{Let $M=G/H$ with base point $o$, let $J$ be a $G$-invariant almost
		complex structure, let $q\colon\g_{\C}\to(\g/\mathfrak h)_{\C}$ be the
		complexified quotient map, let
		$\mathfrak m^{+}\subseteq(\g/\mathfrak h)_{\C}$ be the $(+i)$-eigenspace of
		$J_{o}$, and set $\mathfrak q:=q^{-1}(\mathfrak m^{+})$. If $\mathfrak q$ is
		a Lie subalgebra of $\g_{\C}$, then $J$ is integrable, i.e.\ its Nijenhuis
		tensor vanishes and $J$ underlies a complex manifold structure on $M$.}
	
	%This criterion is classical: by $G$-invariance, the vanishing of the Nijenhuis tensor reduces to the algebraic condition	$[\mathfrak q,\mathfrak q]\subseteq\mathfrak q$ at the base point	(Fr\"olicher \cite{Fro}); the passage from $N_{J}=0$ to an underlying complex	structure is the Newlander--Nirenberg theorem \cite{NN} (in the present	real-analytic homogeneous situation Fr\"olicher's original argument	suffices).

	This criterion is classical (Fr\"olicher \cite{Fro}; via the Newlander--Nirenberg theorem \cite{NN}); we apply Step 2 to $M=\mathcal O\cong G/G_{\mu}$, $o=\mu$,
	$\mathfrak h=\g_{\mu}$. Write
	$\g_{\C}=E_{0}\oplus E_{+}\oplus E_{-}$ for the decomposition of $\g_{\C}$
	into the generalized eigenspaces of $\ad_{\mu}$ grouped by
	$\lambda=0$, $\operatorname{Im}\lambda>0$, $\operatorname{Im}\lambda<0$
	(exhaustive by \textup{(K2)}). By semisimplicity of $0$
	(Lemma~\ref{lem:red}(1)), $E_{0}=(\g_{\mu})_{\C}=\ker q$. The differential at
	$o$ of the identification $G/G_{\mu}\cong\mathcal O$ is exactly
	$\beta_{\mu}$; since $\beta_{\mu}$ intertwines $\bar A_{\mu}$ with
	$A'_{\mu}$, and since by Theorem~\ref{thm:main}(6) the $(+i)$-eigenspace of
	$J_{\mu}$ in $(T_{\mu}\mathcal O)_{\C}$ is the sum of the generalized
	eigenspaces of $A'_{\mu}$ with positive imaginary part, i.e.\
	$E_{+}\subseteq(T_{\mu}\mathcal O)_{\C}$, the corresponding
	$(+i)$-eigenspace of $J_{o}$ on $(\g/\g_{\mu})_{\C}$ is
	$\mathfrak m^{+}=q(E_{+})$ (note $\beta_{\mu}(q(x))=-\ad_{\mu}x\in
	E_{\lambda}$ for $x\in E_{\lambda}$, so $\beta_{\mu}$ matches the two
	gradings). Hence
	\[
	\mathfrak q=q^{-1}\bigl(q(E_{+})\bigr)=E_{0}\oplus E_{+}.
	\]
	Now $\ad_{\mu}$ is a derivation of $\g_{\C}$ (Jacobi identity), so Step 1
	gives $[E_{\lambda},E_{\rho}]\subseteq E_{\lambda+\rho}$. Let
	$\lambda,\rho\in\{0\}\cup\{\operatorname{Im}>0\}$-part of the spectrum. Then
	$\operatorname{Im}(\lambda+\rho)=\operatorname{Im}\lambda
	+\operatorname{Im}\rho\geq0$, with equality iff $\lambda=\rho=0$ (if
	$\lambda+\rho$ were a nonzero real number it would be a nonzero real point
	of $\Spec(\ad_{\mu})$, excluded by \textup{(K2)}; and $\lambda+\rho=0$ with
	both imaginary parts $\geq0$ forces $\lambda,\rho\in\R$, hence
	$\lambda=\rho=0$). Therefore $E_{\lambda+\rho}\subseteq E_{0}\oplus E_{+}$
	in all cases (it is $\{0\}$ if $\lambda+\rho\notin\Spec(\ad_{\mu})$), and
	$[\mathfrak q,\mathfrak q]\subseteq\mathfrak q$. By Step 2, $J$ is
	integrable.
	
	Finally, $d\omega=0$ (see \cite{Kir} for example), $\omega(J\cdot,J\cdot)=\omega$,
	$N_{J}=0$ and $\gamma=\omega(J\cdot,\cdot)$ nondegenerate symmetric imply
	that $(\mathcal O,\omega,J,\gamma)$ is pseudo-K\"ahler, i.e.\
	$\nabla J=0$ for the Levi-Civita connection of $\gamma$: the classical proof
	of the equivalence ``$d\omega=0$ and $N_{J}=0$ $\iff$ $\nabla J=0$'' does not
	use definiteness of the metric. It is K\"ahler precisely when $\gamma>0$,
	i.e.\ when $\kappa_{e}|_{\g_{\mu}^{\perp}}>0$ by (2).
	
	\emph{(4).} The description of the $(+i)$-eigenspace is
	Theorem~\ref{thm:main}(6) transported by $\beta_{\nu}$ as above; a
	$G$-invariant tensor is determined by its value at one point, which gives
	uniqueness. For the last statement: $T_{\nu}\mathcal O=\im\ad_{\nu}$ and
	$A'_{\nu}=\ad_{\nu}|_{\im\ad_{\nu}}$ are defined by $\ad_{\nu}$ alone, and
	$J_{\nu}=f(A'_{\nu})$; $\kappa_{e}$ is used to write $\omega$ via the
	identification $\g^{*}\cong\g$, to formulate \textup{(K1)} (equivalent to
	$\ker\ad_{\mu}^{2}=\ker\ad_{\mu}$ by Lemma~\ref{lem:red}(1) and
	Lemma~\ref{lem:orbitalg}(2)) and to produce $\gamma$, whose signature is
	$\sgn(\kappa_{e}|_{\g_{\mu}^{\perp}})$.
\end{proof}

\begin{example}[Compact case]\label{ex:compact}
	Let $G$ be compact and connected and let $\kappa_{e}$ be any
	$\Ad(G)$-invariant inner product (e.g.\ $-$Killing for $G$ semisimple). Then
	every $\ad_{\xi}$ is $\kappa_{e}$-skew, hence normal, so
	$\Spec(\ad_{\mu})\subseteq i\R$ and $\ad_{\mu}$ is semisimple: \textup{(K1)}
	and \textup{(K2)} hold for every $\mu\neq0$. Since
	$\kappa_{e}|_{\g_{\mu}^{\perp}}>0$, Theorem~\ref{thm:orbit} produces the
	classical $G$-invariant K\"ahler structure on (co)adjoint orbits; for $\mu$
	in the interior of a Weyl chamber of a maximal torus, $E_{+}$ is the sum of
	the root spaces on which the corresponding roots are positive. For explicit invariant \emph{indefinite}
	pseudo-K\"ahler metrics on flag manifolds see also \cite{MasonZiegler}.
\end{example}

\section{The pseudo-K\"ahler reduction}
\label{sec:psN}

\subsection{Preliminaries} \label{sec:polar}

In this section we make the pseudo-K\"ahler structure of Theorem \ref{thm:Szoke} completely explicit. For the general theory of adapted complex structures and of the polar map we refer to \cite{Szoke}. Throughout, left trivialization and the isomorphism $\lieg^*\cong\lieg$ induced by $\ka_e$ identify
\begin{equation}\label{eq:ident}
	T^*G\;\cong\;TG\;\cong\;G\times\lieg,\qquad p\longleftrightarrow(q,x),\quad
	x=q^{-1}p,
\end{equation}
tangent vectors at $(q,x)$ being pairs $(\xi,\eta)\in\lieg\oplus\lieg$ with $\xi=q^{-1}\dot q$ and $\eta=\dot x$; under \eqref{eq:ident} the canonical symplectic structure $\Omega_{\mathrm{can}}$ of $TG$ corresponds to $\omega=d\varpi$, and the map $E$ of the previous section becomes the polar map $P$ below, so that the complex structure computed here is precisely the adapted structure $\Jad$ of Theorem \ref{thm:Szoke}. Every linear operator occurring below is an analytic function $F(A)$ of the single $\ka_e$-skew operator $A=\ad_x$, interpreted through the holomorphic calculus of Lemma \ref{lem:calc}. Recall that
\begin{equation}\label{eq:symbolrules}
	\ka\bigl(F(A)\xi,\eta\bigr)=\ka\bigl(\xi,F(-A)\eta\bigr),
	\qquad F(A)G(A)=(FG)(A)\,,
\end{equation}
and a $\ka$-symmetric $F(A)$ is invertible if and only if $F$ has no zero on $\Spec(A)$.

Introduce the four functions
\begin{equation}\label{eq:symbols}
	\esym(z)= \frac{\sin z}{z},\qquad
	\fsym(z)=z\cot z,\qquad
	\tsym(z)=\tan\frac z2,\qquad
	\vsym(z)=\frac2z\tan\frac z2,
\end{equation}
so that $	z\,\vsym(z)=2\,\tsym(z)$,
with $$\lim_{z\rightarrow 0}\esym(z)=\lim_{z\rightarrow 0}\fsym(z)=\lim_{z\rightarrow 0}\vsym(z)=1\,,$$ $\tsym(0)=0\,.$

\begin{lemma}\label{lem:loeb}
	The zeros of $\esym$ and the poles of
	$\fsym,\tsym,\vsym$ lie in $\pi(\Z\setminus\{0\})\subset\R^*$, and the
	zeros of $\fsym$ lie in $\frac\pi2+\pi\Z\subset\R^*$. Hence if $G$ is of
	Loeb type, then for \emph{every} $x\in\lieg$ the operators
	$\esym(\ad_x)$, $\fsym(\ad_x)$, $\tsym(\ad_x)$, $\vsym(\ad_x)$ are defined, and
	$\esym({\ad_x})$ and $\fsym(\ad_x)$ are invertible.
\end{lemma}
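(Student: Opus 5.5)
The plan is to treat the lemma as an elementary statement about the four scalar functions, followed by a routine application of the holomorphic calculus of Lemma~\ref{lem:calc}. First I locate the zeros and poles. The function $\esym(z)=\sin z/z$ has a removable singularity at $0$ with value $1$, so it is entire. Its zeros are the zeros of $\sin z$ other than $0$, namely $\pi(\Z\setminus\{0\})$; here I use that $\sin z=0$ forces $z\in\pi\Z$, since $\sin$ has no non-real zeros.

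For the others I write $\fsym(z)=z\cos z/\sin z$. Its possible poles are at $\pi\Z$, and the one at $0$ is removable because $z/\sin z\to 1$. Its zeros are the zeros of $\cos z$, namely $\frac\pi2+\pi\Z$, as $\cos$ likewise has only real zeros. For $\tsym(z)=\tan\frac z2$, the poles are where $\cos\frac z2=0$, i.e.\ $z\in\pi+2\pi\Z\subset\pi(\Z\setminus\{0\})$. Finally $\vsym=2\tsym/z$ has the same poles as $\tsym$, with a removable singularity at $0$ where its value is $1$. All these exceptional sets lie in $\R^*$.

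Next comes the operator statement. Since $G$ is of Loeb type, $\Spec(\ad_x)\cap\R\subseteq\{0\}$ for every $x\in\lieg$. Each of $\esym,\fsym,\tsym,\vsym$ is therefore holomorphic on the open set $U=\C\setminus(\R^*\cap\text{poles})$, which contains $\Spec(\ad_x)$ because the poles are nonzero reals. This set $U$ is stable under conjugation, and each function satisfies $\varphi(\bar z)=\overline{\varphi(z)}$. Lemma~\ref{lem:calc} then defines the four operators and, by part (4), shows they are real operators on $\lieg$. For invertibility of $\esym(\ad_x)$ I would take $\varphi=\esym$, which has no zeros on $\Spec(\ad_x)$. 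Then $1/\esym$ is holomorphic on a neighbourhood of the spectrum, namely the complement of the real zero set, which avoids the spectrum. The homomorphism property, Lemma~\ref{lem:calc}(1), gives $\esym(\ad_x)\,(1/\esym)(\ad_x)=\Id$. The same argument works for $\fsym$: its zeros $\frac\pi2+\pi\Z$ and its poles are all nonzero reals, so $1/\fsym=\tan z/z$ is holomorphic near $\Spec(\ad_x)$.

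The only genuinely delicate point is the choice of a common open domain $U$ on which the function is holomorphic and which contains the spectrum. This requires confirming that the point $0\in\Spec(\ad_x)$ is a removable singularity, not a pole, in each case. For the inverse of $\fsym$ it additionally requires checking that $\fsym(0)=1\neq0$, so that $1/\fsym$ is holomorphic at $0$ as well. Once these removable singularities are recorded, the rest is an immediate consequence of Lemma~\ref{lem:calc}.
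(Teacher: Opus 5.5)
Your proposal is correct and follows the same route as the paper. The paper's proof simply observes that $\sin z=0$ iff $z\in\pi\Z$ and $\cos z=0$ iff $z\in\frac\pi2+\pi\Z$, that both sets are real, and that a Loeb-type spectrum meets $\R$ only in $0$; your extra bookkeeping (removable singularities at $0$, reality via Lemma~\ref{lem:calc}(4), and the inverse as $(1/\varphi)(\ad_x)$ via the homomorphism property) just makes explicit what the paper leaves implicit.
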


\begin{proof}
	$\sin z=0$ iff $z\in\pi\Z$; $\cos z=0$ iff $z\in\frac\pi2+\pi\Z$; both
	sets are real, and a Loeb-type spectrum meets $\R$ only in $0$.
\end{proof}

With $\tilde G$ the complexification of $G$ as above, write again
\[
P:\ G\times\lieg\longrightarrow \tilde G,\qquad
P(q,x)=q\,\exp_{\tilde G}(ix),
\]
for the polar map. Since $G$ is of Loeb type, Theorem \ref{thm:Szoke} shows that $P$ is a global diffeomorphism onto its image and that
\[
\Jad:=P^*(J_{\tilde G})
\]
is defined on all of $G\times\lieg\cong T^*G$, since the Grauert tube is entire.

\begin{lemma}\label{lem:dP}
	In the complex left trivialization of $TG_\C$,
	\begin{equation}\label{eq:dP}
		P(q,x)^{-1}\,dP_{(q,x)}(\xi,\eta)
		=e^{-i\ad_x}\,\xi+\frac{1-e^{-i\ad_x}}{\ad_x}\,\eta ,
	\end{equation}
	and $P$ is a local diffeomorphism at $(q,x)$ if and only if
	$\esym( \ad_x)$ is invertible. For Loeb-type $G$ this holds at every point, by Lemma \ref{lem:loeb}.
\end{lemma}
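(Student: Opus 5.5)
We compute the derivative of $P(q,x)=q\exp_{\tilde G}(ix)$ along a curve $t\mapsto(q(t),x(t))$ with $q^{-1}\dot q=\xi$ and $\dot x=\eta$. Differentiating the product gives
\[
P^{-1}\tfrac{d}{dt}P=\Ad_{\exp(-ix)}\xi+\exp_{\tilde G}(ix)^{-1}\tfrac{d}{dt}\exp_{\tilde G}(ix).
\]
We handle the two terms separately.

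For the first term we use $\Ad_{\exp(-ix)}=e^{-i\ad_x}$; this holds on $\tilde\lieg$ because $\Ad\circ\exp=\exp\circ\ad$.

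For the second term we use the standard left-trivialized derivative of the exponential map:
\[
\exp(X)^{-1}d\exp_X(Y)=\frac{1-e^{-\ad_X}}{\ad_X}\,Y,
\]
where the right-hand side is understood as the entire power series. With $X=ix$ and $Y=i\eta$ this gives
\[
\frac{1-e^{-i\ad_x}}{i\ad_x}\,i\eta=\frac{1-e^{-i\ad_x}}{\ad_x}\,\eta.
\]
The right-hand side is again an entire function of $\ad_x$, so it is meaningful through Lemma~\ref{lem:calc}(1). Adding the two contributions yields \eqref{eq:dP}.

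For the regularity criterion, note that $dP$ is a real linear map from $\lieg\oplus\lieg$ into $\tilde\lieg=\lieg\oplus i\lieg$, and both spaces have real dimension $2d_G$. So $P$ is a local diffeomorphism at $(q,x)$ if and only if this map is injective.

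Factor out the invertible operator $e^{-i\ad_x}$. The condition becomes injectivity of
\[
(\xi,\eta)\mapsto\xi+\frac{e^{i\ad_x}-1}{\ad_x}\,\eta .
\]
Since $\ad_x$ is real, $\cos$ and $\sin$ of it are real operators, so we may split into real and imaginary parts. The real part is
\[
\xi+\frac{\cos\ad_x-1}{\ad_x}\,\eta,
\]
and the imaginary part is
\[
\frac{\sin\ad_x}{\ad_x}\,\eta=\esym(\ad_x)\,\eta,
\]
with both operators again defined via Lemma~\ref{lem:calc}.

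The map is block triangular. Vanishing of the imaginary part forces $\eta\in\ker\esym(\ad_x)$, and then $\xi$ is uniquely determined by $\eta$ from the real part. Hence the kernel is isomorphic to $\ker\esym(\ad_x)$, and $dP$ is injective exactly when $\esym(\ad_x)$ is invertible. In the Loeb case, Lemma~\ref{lem:loeb} gives this invertibility at every point.

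The main subtlety is justifying the formula for $d\exp$ on the complex group $\tilde G$, and doing so consistently with the left-trivialization conventions. One route is to cite the classical formula. Alternatively, we can derive it by setting $F(s)=\exp(-sX)\,\partial_t\exp(sX)$ and solving the ODE $F'(s)=e^{-s\ad_X}Y$, which integrates to the stated power series. The remaining care point is the real/imaginary splitting: it is valid only because $\ad_x$ preserves $\lieg$ (Lemma~\ref{lem:calc}(4)).
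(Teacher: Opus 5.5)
Your proposal is correct and follows the paper's proof. The formula comes from $\Ad_{\exp(-ix)}=e^{-i\ad_x}$ and the left-trivialized differential of $\exp$ at $X=ix$, $\dot X=i\eta$, and regularity is read off from the real form of the map. The only difference is bookkeeping: the paper takes the ``determinant'' $\mathsf C\mathsf Q+\mathsf S\mathsf P=\esym(z)$ of the commuting-block matrix $\bigl(\begin{smallmatrix}\mathsf C&\mathsf P\\ -\mathsf S&\mathsf Q\end{smallmatrix}\bigr)$, whereas you first multiply by $e^{i\ad_x}$, which turns it into the block upper-triangular $\bigl(\begin{smallmatrix}1&(\cos z-1)/z\\ 0&\esym(z)\end{smallmatrix}\bigr)$ and makes the kernel computation explicit without the commuting-block determinant identity.
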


\begin{proof}
	Equation \eqref{eq:dP} follows from $\Ad_{\exp(-ix)}=e^{-i\ad_x}$ and the
	differential-of-exponential formula (see e.g.\ \cite{DuistermaatKolk})
	$$\exp(X)^{-1}d\exp_X(\dot X)=\frac{1-e^{-\ad_X}}{\ad_X}\dot X$$ with
	$X=ix$, $\dot X=i\eta$. Writing $e^{-iz}=\mathsf C-i \mathsf S$ and
	$$\frac{1-e^{-iz}}{z}=\mathsf P+i\mathsf Q$$ with
	\[
	\mathsf C=\cos z,\quad \mathsf S=\sin z,\quad \mathsf P=\frac{1-\cos z}{z},
	\quad \mathsf Q=\frac{\sin z}{z},
	\]
	the real form of \eqref{eq:dP} has block matrix
	$\bigl(\begin{smallmatrix} \mathsf C&\mathsf P\\ - \mathsf S&\mathsf Q\end{smallmatrix}\bigr)${, the symbols being evaluated at $z=\ad_x$ through the holomorphic calculus of Lemma \ref{lem:calc},}
	whose determinant is $\mathsf C\mathsf Q+ \mathsf S\mathsf P=\esym(z)$.
\end{proof}

\begin{proposition}\label{prop:J}
	At $(q,x)$, in the trivialization \eqref{eq:ident},
	\begin{equation}\label{eq:J}
		\Jad(\xi,\eta)=\Bigl(\ \tsym(\ad_x)\,\xi-\vsym(\ad_x)\,\eta,\ \
		\frac{1}{\esym(\ad_x)}\,\xi-\tsym(\ad_x)\,\eta\ \Bigr).
	\end{equation}
\end{proposition}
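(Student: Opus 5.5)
The plan is to compute $\Jad=P^*J_{\tilde G}$ directly from Lemma \ref{lem:dP}. In the complex left trivialization of $T\tilde G$, the complex structure $J_{\tilde G}$ is multiplication by $i$ on $\lieg^{\C}=\lieg\oplus i\lieg$, because left translations of $\tilde G$ are holomorphic. Thus $\Jad(\xi,\eta)=(\xi',\eta')$ is characterized by
\[
D(\xi',\eta')=i\,D(\xi,\eta),\qquad D(\xi,\eta):=e^{-i\ad_x}\xi+\frac{1-e^{-i\ad_x}}{\ad_x}\eta .
\]
Since every operator involved is a function of $A=\ad_x$, I will work throughout with scalar symbols in a variable $z$ and invoke Lemma \ref{lem:calc}(1) to transfer identities between symbols to identities between operators. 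Lemma \ref{lem:loeb} guarantees that the relevant symbols ($\esym^{-1}$, $\tsym$, $\vsym$) are holomorphic on a neighborhood of $\Spec(\ad_x)$ for Loeb-type $G$.

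\textbf{Real block form.} As in the proof of Lemma \ref{lem:dP}, with $\mathsf C,\mathsf S,\mathsf P,\mathsf Q$ defined there, $D$ has real matrix
\[
M=\begin{pmatrix}\mathsf C&\mathsf P\\-\mathsf S&\mathsf Q\end{pmatrix},
\]
where the first row gives the real part and the second the imaginary part. Multiplication by $i$ acts on (real, imaginary) pairs as $\mathcal I=\bigl(\begin{smallmatrix}0&-1\\1&0\end{smallmatrix}\bigr)$. Hence $\Jad=M^{-1}\mathcal I M$. The entries of $M$ commute, and $\det M=\mathsf C\mathsf Q+\mathsf S\mathsf P=\esym$ is invertible. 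So
\[
M^{-1}=\esym^{-1}\begin{pmatrix}\mathsf Q&-\mathsf P\\ \mathsf S&\mathsf C\end{pmatrix},
\]
and multiplying out the three $2\times2$ matrices of commuting symbols gives the entries of $\Jad$.

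\textbf{Simplification.} The remaining task is to identify the four resulting entries with the symbols in \eqref{eq:J}.
\begin{itemize}
\item The upper-left entry is $(\mathsf Q\mathsf S+\mathsf P\mathsf C)/\esym$. Here $\mathsf Q\mathsf S+\mathsf P\mathsf C=(\sin^2z+\cos z-\cos^2z)/z=(1-\cos z)(1+\cos z)\cdot\ldots$; more directly, it equals $\bigl(1-\cos z\bigr)/z\cdot\ldots$. I will reduce it using the half-angle identities $1-\cos z=2\sin^2(z/2)$ and $\sin z=2\sin(z/2)\cos(z/2)$, which should produce $\tsym(z)$.
\item The upper-right entry reduces to $-(\mathsf P^2+\mathsf Q^2)/\esym=-\frac{2(1-\cos z)}{z^2}\cdot\frac{z}{\sin z}=-\frac2z\tan\frac z2=-\vsym$.
\item The lower-left entry reduces to $(\mathsf S^2+\mathsf C^2)/\esym=1/\esym$.
\item The lower-right entry is forced to be $-\tsym$, by the trace-zero property of a conjugate of $\mathcal I$.
\end{itemize}

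\textbf{Main obstacle.} The only real difficulty is bookkeeping of conventions: the sign of $\mathcal I$, and the orientation of the real and imaginary rows, which depends on whether the paper identifies $\lieg^{\C}$ as (real, imaginary) or (imaginary, real). A wrong choice there flips the overall sign of $\Jad$. I will fix these conventions by checking the result at $x=0$, where \eqref{eq:J} gives $\Jad(\xi,\eta)=(-\eta,\xi)$. This must agree with $dP_{(q,0)}(\xi,\eta)=\xi+i\eta$ together with multiplication by $i$, and that check settles the sign. I will also confirm $\Jad^2=-\Id$ via the symbol identity $\tsym^2-\vsym/\esym=-1$ as a consistency check.
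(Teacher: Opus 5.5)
Your strategy is the same as the paper's. The paper writes $dP(\tilde\xi,\tilde\eta)=i\,dP(\xi,\eta)$ in the real block form of Lemma \ref{lem:dP} and solves it, which amounts to $\Jad=M^{-1}\mathcal I M$ with commuting symbol entries. However, one step fails as written.

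The upper-left entry of $M^{-1}\mathcal I M$ is not $(\mathsf Q\mathsf S+\mathsf P\mathsf C)/\esym$. Multiplying the first row $(\mathsf Q,-\mathsf P)/\esym$ of $M^{-1}$ by the first column $(\mathsf S,\mathsf C)$ of $\mathcal I M=\bigl(\begin{smallmatrix}\mathsf S&-\mathsf Q\\ \mathsf C&\mathsf P\end{smallmatrix}\bigr)$ gives $(\mathsf Q\mathsf S-\mathsf P\mathsf C)/\esym$. With your sign the numerator is $(\sin^2z+\cos z-\cos^2z)/z=(1-\cos z)(1+2\cos z)/z$, so the entry would be $\tsym(z)\,(1+2\cos z)$. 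No half-angle manipulation turns that into $\tsym(z)$, and your trace argument would then carry the error into the lower-right entry. With the correct sign one has the exact identity
\[
\mathsf Q\mathsf S-\mathsf P\mathsf C=\frac{\sin^2z+\cos^2z-\cos z}{z}=\mathsf P ,
\]
which is the first identity the paper uses. Hence the entry is $\mathsf P/\esym=(1-\cos z)/\sin z=\tan\frac z2=\tsym(z)$. The lower-right entry is then $(\mathsf C\mathsf P-\mathsf S\mathsf Q)/\esym=-\mathsf P/\esym=-\tsym(z)$, either directly or by your trace-zero argument. Your upper-right and lower-left entries are correct.

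The convention issue you single out as the main obstacle is not actually open. Lemma \ref{lem:dP} already fixes the first row as real part and the second as imaginary part, and $\mathcal I$ is forced by $i(a+ib)=-b+ia$. Your normalization at $x=0$ also cannot catch the slip above, because $\tsym(0)$ and $\tsym(0)(1+2\cos 0)$ both vanish. Only your $\Jad^2=-\Id$ check would detect it: with the wrong diagonal entry, $\tsym^2(1+2\cos z)^2-\vsym/\esym$ is not identically $-1$. The real content of the proof is therefore the verification of the four symbol identities, and that is where the care is needed.
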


\begin{proof}
	Write $\Jad(\xi,\eta)=(\tilde\xi,\tilde\eta)$; by definition
	$dP(\tilde\xi,\tilde\eta)=i\,dP(\xi,\eta)$, i.e.
	\[
	\begin{pmatrix}\mathsf C&\mathsf P\\ -\mathsf S&\mathsf Q\end{pmatrix}
	\begin{pmatrix}\tilde\xi\\ \tilde\eta\end{pmatrix}
	=\begin{pmatrix}\mathsf S&-\mathsf Q\\ \mathsf C&\mathsf P\end{pmatrix}
	\begin{pmatrix}\xi\\ \eta\end{pmatrix},
	\]
	which can be solved by Lemma \ref{lem:dP}. Using
	$$\mathsf Q \mathsf S-\mathsf P \mathsf C=\mathsf P\,,\quad \mathsf Q^2+\mathsf P^2=\frac{2(1-\mathsf C)}{z^2}\,,\quad
	\mathsf C\mathsf P-\mathsf S\mathsf Q=-\mathsf P$$ and
	$$\frac{1-\cos z}{\sin z}=\tan\frac z2\,,$$ the solution simplifies to
	\eqref{eq:J}. Finally, $\Jad^2=-\id$ follows from
	$$\tsym^2(z)-\vsym(z)\cdot\frac{z}{\sin z}
	=\tan^2\frac z2-\sec^2\frac z2=-1\,.$$
\end{proof}

%{\color{red}Formula \eqref{eq:J} is essentially known: it is the specialization to the polar map of the general expression for admissible complex structures on $G\times\lieg$ given in \cite[Prop.~3.7 and (3.19)]{HuebschmannLeicht}, going back to \cite[Prop.~3.1]{Bielawski:2003}; neither compactness of $G$ nor definiteness of $\ka$ plays any role in the computation.}

\begin{convention}\label{conv:g}
	$\omega:=d\varpi$ on $T^*G$, and $\gad(u,v):=\omega(\Jad u,v)$; the
	sign is fixed so that $\gad$ is positive in the compact/Riemannian
	case (Remark \ref{rem:flatlimit}). In the notation of the previous sections, $\gad=E^*\gtG$ under \eqref{eq:ident}.
\end{convention}

\begin{proposition}\label{prop:blocks}
	At $(q,x)$,
	\begin{align}
		\omega\bigl((\xi_1,\eta_1),(\xi_2,\eta_2)\bigr)
		&=\ka(\xi_1,\ad_x\xi_2)-\ka(\xi_1,\eta_2)+\ka(\eta_1,\xi_2),
		\label{eq:omega}
	\end{align}
	\begin{align}
		\gad\bigl((\xi_1,\eta_1),(\xi_2,\eta_2)\bigr)
		=&\ka\bigl(\fsym(\ad_x)\xi_1,\xi_2\bigr)
		+\ka\bigl(\xi_1,\tsym(\ad_x)\eta_2\bigr) \notag\\
		&+\ka\bigl(\xi_2,\tsym(\ad_x)\eta_1\bigr)
		+\ka\bigl(\vsym(\ad_x)\eta_1,\eta_2\bigr).
		\label{eq:g}
	\end{align}
	The triple $(\omega,\Jad,\gad)$ is pseudo-K\"ahler on $T^*G$:
	$$\omega(\Jad\cdot,\Jad\cdot)=\omega,\quad
	\gad(\Jad\cdot,\Jad\cdot)=\gad,\quad \omega=\gad(\cdot,\Jad\cdot),\quad
	d\omega=0$$ and $\Jad$ integrable.
\end{proposition}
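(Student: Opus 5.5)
The plan is to compute $\omega=d\varpi$ in the trivialization \eqref{eq:ident}, define $\gad$ through Convention \ref{conv:g} using Proposition \ref{prop:J}, and then read off the pseudo-K\"ahler properties from Theorem \ref{thm:Szoke}.

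\emph{Step 1: the form \eqref{eq:omega}.} Under \eqref{eq:ident} the canonical $1$-form is $\varpi_{(q,x)}(\xi,\eta)=\ka(x,\xi)$, where $\xi=q^{-1}\dot q$. Extend $(\xi_j,\eta_j)$ to vector fields on $G\times\lieg$ with $\xi_j$ left invariant and $\eta_j$ constant. Then
\[
d\varpi(X_1,X_2)=X_1\varpi(X_2)-X_2\varpi(X_1)-\varpi([X_1,X_2]).
\]
The first two terms give $\ka(\eta_1,\xi_2)-\ka(\eta_2,\xi_1)$. Since $[X_1,X_2]=([\xi_1,\xi_2],0)$, the last term is $-\ka(x,[\xi_1,\xi_2])$, and by \eqref{eq:inv} this equals $\ka(\xi_1,\ad_x\xi_2)$ up to the sign fixed by the convention on left-invariant brackets. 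This yields \eqref{eq:omega}. If the orientation of the sign disagrees, I will fix it by matching the KKS convention \eqref{eq:kks}.

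\emph{Step 2: the metric \eqref{eq:g}.} I substitute \eqref{eq:J} into $\gad(u,v)=\omega(\Jad u,v)$. I move every operator across $\ka$ using \eqref{eq:symbolrules}, with $\ad_x$ being $\ka$-skew, and collect the $\xi\xi$, $\xi\eta$ and $\eta\eta$ blocks.
\begin{itemize}
\item The $\eta_1\eta_2$ block is $\ka(\vsym(\ad_x)\eta_1,\eta_2)$ directly.
\item The cross terms combine $-\tsym$ with the $\ka(\eta_1,\xi_2)$ term.
\item The $\xi\xi$ block is $\ka\bigl((\tsym(-z)(-z)\text{-type}+1/\esym)\xi_1,\xi_2\bigr)$. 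It should reduce to $\fsym$ via the scalar identity
\[
\frac{z}{\sin z}-z\tan\frac z2=z\,\frac{1-2\sin^2(z/2)}{\sin z}=z\cot z .
\]
\end{itemize}
All these are scalar identities in $z$, transferred by Lemma \ref{lem:calc}(1); the calculus is legitimate for all $x$ by Lemma \ref{lem:loeb}. Symmetry of $\gad$ follows because $\fsym$ and $\vsym$ are even, so $\fsym(\ad_x)$ and $\vsym(\ad_x)$ are $\ka$-symmetric, while the $\tsym$ terms appear symmetrically.

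\emph{Step 3: the pseudo-K\"ahler properties.} $d\omega=0$ holds since $\omega$ is exact. Integrability of $\Jad$ holds because $\Jad=P^*J_{\tilde G}$ with $P$ a local biholomorphism (Lemma \ref{lem:dP}, Theorem \ref{thm:Szoke}). I will verify $\omega(\Jad\cdot,\Jad\cdot)=\omega$ directly: it is equivalent to symmetry of $\gad$, since $\omega(\Jad u,\Jad v)=\gad(u,\Jad v)$ and $\Jad^2=-\id$. With that in hand, $\gad(\Jad\cdot,\Jad\cdot)=\gad$ and $\omega=\gad(\cdot,\Jad\cdot)$ follow formally. Nondegeneracy of $\gad$ comes from that of $\omega$.

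The main obstacle is Step 2's bookkeeping: the signs from transposing odd symbols ($\tsym$ is odd, $\ad_x$ is skew) and the sign convention in Step 1. I would check the result against the abelian/flat limit $x\to0$, where $\gad=\ka\oplus\ka$ with the correct cross terms.
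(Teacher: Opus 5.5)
Your proposal is correct and follows the paper's proof: the same Cartan-formula computation with left-invariant/constant extensions for \eqref{eq:omega}, the same substitution of \eqref{eq:J} using $\frac{z}{\sin z}-z\tan\frac z2=z\cot z$ for \eqref{eq:g}, and integrability by pullback from $\tilde G$. Your deduction of $\omega(\Jad\cdot,\Jad\cdot)=\omega$ from the evident symmetry of \eqref{eq:g} usefully makes explicit a point the paper leaves implicit; however, the sign in Step 1 does not depend on any convention and should not be ``fixed'' by appeal to \eqref{eq:kks}, since $[\xi_1^L,\xi_2^L]=[\xi_1,\xi_2]^L$ and \eqref{eq:inv} give exactly $-\ka(x,[\xi_1,\xi_2])=\ka(\xi_1,\ad_x\xi_2)$.
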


\begin{proof}
	First we prove equation \eqref{eq:omega}. Since $d\varpi$ is tensorial, we may compute it on any extensions of the given tangent vectors; we extend $(\xi_i,\eta_i)$ to the vector field $U_i$ that is constant in the trivialization \eqref{eq:ident}, i.e.\ left-invariant on the $G$-factor, $U_i(q,x)=\bigl(\xi_i^{L}(q),\eta_i\bigr)$. Then $\varpi_{(q,\nu)}(U_i)=\pair{\nu}{\xi_i}$ depends on $\nu$ alone and linearly, while $[U_1,U_2]=\bigl([\xi_1,\xi_2]^{L},0\bigr)$, and the Cartan formula gives
	$$d\varpi(U_1,U_2)=\pair{\eta_1^\flat}{\xi_2}-\pair{\eta_2^\flat}{\xi_1}
	-\pair{\nu}{[\xi_1,\xi_2]},$$ and
	$\ka(x,[\xi_1,\xi_2])=\ka(\ad_x\xi_1,\xi_2)$ by invariance. For
	\eqref{eq:g}, insert \eqref{eq:J} into \eqref{eq:omega}: e.g.\ on
	horizontal pairs,
	\begin{align*}
	\gad\bigl((\xi,0),(\eta,0)\bigr)
	&=\omega\Bigl(\bigl(\tsym(\ad_x)\,\xi,\ \tfrac{1}{\esym(\ad_x)}\,\xi\bigr),\ (\eta,0)\Bigr)
	\\ &=\ka\bigl(\tsym(\ad_x)\,\xi,\ \ad_x\eta\bigr)
	+\ka\bigl(\tfrac{1}{\esym(\ad_x)}\,\xi,\ \eta\bigr)
	\\ &=\ka\Bigl(\bigl(\tfrac{1}{\esym(\ad_x)}-\ad_x\,\tsym(\ad_x)\bigr)\xi,\ \eta\Bigr)
	=\ka\bigl(\fsym(\ad_x)\,\xi,\ \eta\bigr),
	\end{align*}
	and $\frac{z}{\sin z}-z\tan\frac z2=z\cot z=\fsym(z)$ by
	$\cot z+\tan\frac z2=\csc z$. The remaining blocks are obtained in the same way, the only tool being \eqref{eq:symbolrules} for the odd function $F(z)=z$, i.e.\ $\ka(\zeta,\ad_x\zeta')=-\ka(\ad_x\zeta,\zeta')$. On $(\xi_1,\eta_2)$ one gets $-\ka(\tsym(\ad_x)\xi_1,\eta_2)=\ka(\xi_1,\tsym(\ad_x)\eta_2)$, since $\tsym$ is odd; on $(\eta_1,\xi_2)$,
	\begin{align*}
	-\ka\bigl(\vsym(\ad_x)\eta_1,\ad_x\xi_2\bigr)-\ka\bigl(\tsym(\ad_x)\eta_1,\xi_2\bigr)
	&=\ka\bigl(\bigl(\ad_x\vsym(\ad_x)-\tsym(\ad_x)\bigr)\eta_1,\ \xi_2\bigr) \\
	&=\ka\bigl(\xi_2,\tsym(\ad_x)\eta_1\bigr),
	\end{align*}
	because $z\,\vsym(z)-\tsym(z)=\tsym(z)$; and on $(\eta_1,\eta_2)$ the single term $\ka(\vsym(\ad_x)\eta_1,\eta_2)$ appears directly. Closedness of $\omega=d\varpi$ is automatic, and $\Jad$ is integrable
	because it is pulled back from $G_\C$.
\end{proof}

\begin{lemma}\label{lem:invariance}
	The lifted left and right actions on $T^*G$ read
	$L_g(q,x)=(gq,x)$ and $R_h(q,x)=(qh^{-1},\Ad_hx)$ in \eqref{eq:ident}; both preserve $\varpi$, $\omega$,
	$\Jad$ and $\gad$.
\end{lemma}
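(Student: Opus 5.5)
First I will derive the formulas for the two lifted actions in the trivialization \eqref{eq:ident}. Take $p\in T^*_qG$ with $x=q^{-1}p$. The left translation $L_g$ is lifted to $T^*G$ as the cotangent lift, sending $p$ to $g p\in T^*_{gq}G$. Then $(gq)^{-1}(gp)=q^{-1}p=x$, which gives $L_g(q,x)=(gq,x)$. For the right action, the lifted point $ph^{-1}$ lies over $qh^{-1}$. Its left-trivialized value is $(qh^{-1})^{-1}ph^{-1}=h(q^{-1}p)h^{-1}=\Ad_h x$. Under $\lieg^*\cong\lieg$ the coadjoint action becomes the adjoint action, because $\ka_e$ is $\Ad$-invariant. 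This gives $R_h(q,x)=(qh^{-1},\Ad_hx)$.

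Next, $\varpi$. Cotangent lifts of diffeomorphisms always preserve the tautological $1$-form, so both actions preserve $\varpi$, and hence $\omega=d\varpi$. As a consistency check I will also verify this directly on \eqref{eq:omega}. The differentials in the trivialization are $dL_g(\xi,\eta)=(\xi,\eta)$, since left translation does not change the left-trivialized velocity $q^{-1}\dot q$, and $dR_h(\xi,\eta)=(\Ad_h\xi,\Ad_h\eta)$, since $(qh^{-1})^{-1}\tfrac{d}{dt}(qh^{-1})=\Ad_h(q^{-1}\dot q)$.

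For $L_g$ the formulas \eqref{eq:omega}, \eqref{eq:J} and \eqref{eq:g} do not depend on $q$. The differential is the identity, so invariance of $\Jad$ and $\gad$ is immediate.

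For $R_h$ I will use the intertwining relation
\[
\ad_{\Ad_hx}=\Ad_h\,\ad_x\,\Ad_h^{-1}.
\]
By Lemma \ref{lem:calc}(2) it gives $F(\ad_{\Ad_hx})=\Ad_h F(\ad_x)\Ad_h^{-1}$ for each of $F=\esym^{-1},\fsym,\tsym,\vsym$. These operators are defined by Lemma \ref{lem:loeb}, and $\Ad_h$ preserves the spectrum. Substituting $(\Ad_h\xi,\Ad_h\eta)$ at the point $(qh^{-1},\Ad_hx)$ into \eqref{eq:J}, every entry becomes $\Ad_h$ applied to the corresponding entry at $(q,x)$. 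This gives $dR_h\circ\Jad=\Jad\circ dR_h$.

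The same substitution into \eqref{eq:omega} and \eqref{eq:g} leaves every term unchanged, because $\ka(\Ad_h\cdot,\Ad_h\cdot)=\ka$. Alternatively, invariance of $\gad$ follows at once from $\gad=\omega(\Jad\cdot,\cdot)$.

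An intrinsic alternative is to observe that, under the polar map, both actions are holomorphic on $\tilde G$: $P(gq,x)=gP(q,x)$ and $P(qh^{-1},\Ad_hx)=P(q,x)h^{-1}$. I will use this only as a cross-check.

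The main obstacle is the bookkeeping for the right action. I need to confirm that the differential of $R_h$ really is $\Ad_h\oplus\Ad_h$ in the trivialization. The base point also moves, so this must be checked against the $q$-independence of all the formulas. Beyond that, the functional-calculus conjugation identity has to hold for operators defined only on the Loeb-type spectrum.
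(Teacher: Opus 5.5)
Your proof is correct. For $\varpi$ and $\omega$ it coincides with the paper's: cotangent lifts preserve the tautological form.

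For $\Jad$ you take a different route. The paper argues intrinsically. It notes that $P(gq,x)=g\,P(q,x)$ and $P(qh^{-1},\Ad_hx)=P(q,x)\,h^{-1}$, so under the polar map both actions become translations of $\tilde G$, which are biholomorphic. Hence $\Jad=P^*J_{\tilde G}$ is preserved, and invariance of $\gad=\omega(\Jad\cdot,\cdot)$ follows. You instead stay inside the trivialization \eqref{eq:ident}. You derive the coordinate form of the two lifts and of their differentials, namely $\Id$ and $\Ad_h\oplus\Ad_h$. You then check invariance of \eqref{eq:J}, \eqref{eq:omega} and \eqref{eq:g} via $\ad_{\Ad_hx}=\Ad_h\ad_x\Ad_h^{-1}$, Lemma \ref{lem:calc}(2), and $\Ad$-invariance of $\ka$.

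What each route buys:
\begin{itemize}
\item The paper's argument is shorter and does not use the closed formula of Proposition \ref{prop:J} at all.
\item Yours uses \eqref{eq:J} only structurally. It needs that $\Jad$ at $(q,x)$ is $q$-independent and built from functions of $\ad_x$ holomorphic near $\Spec(\ad_x)$, which already follows from Lemma \ref{lem:dP}.
\item In exchange, yours justifies the coordinate expressions for $L_g$ and $R_h$, which the statement asserts and the paper's proof does not derive. It also serves as a consistency check on \eqref{eq:J} and \eqref{eq:g}.
\end{itemize}

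What you flag as the ``main obstacle'' is not one. The differential of $R_h$ is the one-line identity $(qh^{-1})^{-1}\tfrac{d}{dt}(qh^{-1})=\Ad_h(q^{-1}\dot q)$, which you already wrote. Since $\Spec(\ad_{\Ad_hx})=\Spec(\ad_x)$, a single contour computes both sides of the conjugation identity.
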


\begin{proof}
	Cotangent lifts preserve $\varpi$; and
	$P(gq,x)=g\,P(q,x)$,
	$$P(qh^{-1},\Ad_hx)=P(q,x)\,h^{-1},$$ with translations of $\tilde G$
	biholomorphic; then $\gad=\omega(\Jad\cdot,\cdot)$ is preserved.
\end{proof}

\begin{remark}\label{rem:flatlimit}
	At $x=0$, \eqref{eq:g} is $\ka\oplus\ka$ and \eqref{eq:J} is the flat
	$J(\xi,\eta)=(-\eta,\xi)$. If $\ka$ is definite,
	$\Spec(\ad_x)\subseteq i\R$ and $\fsym(it)=t\coth t\geq1$,
	$$\vsym(it)=\frac2t\tanh\frac t2>0.$$ The classical positive adapted
	K\"ahler metric, cf.\ \cite{Szoke,paogal}. The opposite convention
	$\gad=\omega(\cdot,\Jad\cdot)$ flips the global sign of $\gad$ and of
	every Gram form below.
\end{remark}

%\subsection{The Gram form on $N$}\label{sec:gram}

\begin{assumption}\label{ass:H}
	$H\subseteq G$ is a closed connected subgroup satisfying
	\begin{equation}\label{eq:H}
		\lieh\cap\lieh^{\perp_{\ka}}=\{0\} .
	\end{equation}
\end{assumption}

\begin{lemma}\label{lem:H}
	Condition \eqref{eq:H} is equivalent to each of the {following}: $\kah:=\ka|_\lieh$
	non-degenerate; $\ka|_{\lieh^{\perp_\ka}}$ non-degenerate; $\lieg=\lieh\oplus\lieh^{\perp_\ka}$. 
\end{lemma}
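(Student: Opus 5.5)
The plan is to show that all four conditions reduce to the vanishing of the single subspace $\lieh\cap\lieh^{\perp_\ka}$. Two standard facts about the non-degenerate form $\ka$ on $\lieg$ will be used throughout: $\dim\lieh^{\perp_\ka}=\dim\lieg-\dim\lieh$, and $(\lieh^{\perp_\ka})^{\perp_\ka}=\lieh$.

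\emph{Step 1.} First identify the radical of $\kah$. By definition,
\[
\rad\kah=\{\xi\in\lieh:\ \ka(\xi,\eta)=0\ \ \forall\eta\in\lieh\}=\lieh\cap\lieh^{\perp_\ka}.
\]
Hence $\kah$ is non-degenerate if and only if \eqref{eq:H} holds.

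\emph{Step 2.} Apply the same reasoning to the subspace $\lieh^{\perp_\ka}$. Its radical under $\ka$ is
\[
\lieh^{\perp_\ka}\cap(\lieh^{\perp_\ka})^{\perp_\ka}=\lieh^{\perp_\ka}\cap\lieh,
\]
using $(\lieh^{\perp_\ka})^{\perp_\ka}=\lieh$. So $\ka|_{\lieh^{\perp_\ka}}$ is non-degenerate exactly when \eqref{eq:H} holds.

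\emph{Step 3.} For the direct-sum condition, note that $\dim\lieh+\dim\lieh^{\perp_\ka}=\dim\lieg$. Therefore the sum $\lieh+\lieh^{\perp_\ka}$ is all of $\lieg$ if and only if it is direct, that is, if and only if $\lieh\cap\lieh^{\perp_\ka}=\{0\}$. This is the same dimension count already used in Lemma~\ref{lem:red}(2).

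All three equivalences are routine. The only point needing care is the double-orthogonal identity $(\lieh^{\perp_\ka})^{\perp_\ka}=\lieh$ in indefinite signature. It holds because $\lieh\subseteq(\lieh^{\perp_\ka})^{\perp_\ka}$ trivially, and both spaces have the same dimension by applying the dimension formula twice.
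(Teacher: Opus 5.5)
Your proof is correct and is the same standard linear-algebra argument the paper invokes. The paper's proof only cites the identity $\dim\lieh+\dim\lieh^{\perp_\ka}=d_G$; you spell out the details it leaves implicit: the radical computations for $\kah$ and $\ka|_{\lieh^{\perp_\ka}}$, and the double-orthogonal identity $(\lieh^{\perp_\ka})^{\perp_\ka}=\lieh$.
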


\begin{proof}
	Standard linear algebra for the pair $(\lieh,\lieh^{\perp_\ka})$, using
	$$\dim\lieh+\dim\lieh^{\perp_\ka}=d_G.$$
\end{proof}

{For ease of notation, we adopt the following convention $\lies:=\lieh^{\perp_\ka}$. Furthermore, we write $P_\lieh$ for the corresponding $\ka$-orthogonal projection, and $x_\mu\in\lieh$ for the $\kah$-dual of $\mu\in\lieh^*$. Now, we introduce the Hamiltonian $H$-space $Y$ and we study the Gram form of the induced pseudo-Riemannian structure on $N$. }

\begin{assumption}\label{ass:Y} The quadruple
	$(Y,\omega_Y,J_Y,\Psi)$ is a Hamiltonian $H$-space whose symplectic form is pseudo-K\"ahler, $J_Y$ is an integrable complex structure compatible with $\omega_Y$, the metric $g_Y:=\omega_Y(J_Y\cdot,\cdot)$ is non-degenerate \textup(of arbitrary signature\textup), $H$ preserves $\omega_Y$ and
	$J_Y$ \textup(hence $g_Y$\textup), and the equivariant momentum map
	$\Psi:Y\to\lieh^*$ satisfies
	$d\pair{\Psi}{\eta}=\iota_{\eta_Y}\omega_Y$.
\end{assumption}

Recall from the introduction the data \eqref{eq:N}--\eqref{eq:Ind}: $N=T^*G\times Y$ with $\omega_N=d\varpi+\omega_Y$, the $H$-action $h\cdot(p,y)=(ph^{-1},h(y))$ with equivariant momentum map $\psi(p,y)=\Psi(y)-q^{-1}p|_{\lieh}$, and $N/\!\!/H=\psi^{-1}(0)/H$. We endow $N$ with
\begin{equation}\label{eq:JN}
	J_N:=\Jad\oplus J_Y,\qquad g_N:=\gad\oplus g_Y :
\end{equation}
by Proposition \ref{prop:blocks} and Assumption \ref{ass:Y},
$(N,\omega_N,J_N,g_N)$ is a pseudo-K\"ahler manifold, with $H$ acting by pseudo-K\"ahler automorphisms.

\begin{lemma}\label{lem:level}
	Under Assumption \textup{\ref{ass:H}},
	\[
	\psi^{-1}(0)=\bigl\{(p,y):\,q^{-1}p\,\big|_\lieh=\Psi(y)\bigr\}
	\;\cong\;G\times\lies\times Y,
	\]
	a point being $z=\bigl((q,\,x_{\Psi(y)}+\lambda),\,y\bigr)$,
	$\lambda\in\lies$, in the coordinates \eqref{eq:ident}. The
	infinitesimal generator of $\xi\in\lieh$ at $z$ is
	\begin{equation}\label{eq:gen}
		\xi_N(z)=\Bigl(\bigl(-\xi,\ -\ad_x\,\xi\bigr),\ \xi_Y(y)\Bigr),
		\qquad x:=x_{\Psi(y)}+\lambda ,
	\end{equation}
	and the $H$-action on $\psi^{-1}(0)$ is free and proper.
\end{lemma}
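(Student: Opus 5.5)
The plan is to work entirely in the left trivialization \eqref{eq:ident}, where a point of $N$ is $((q,x),y)$ with $x=q^{-1}p\in\lieg$ (via $\ka_e$). First I will translate the level set. By \eqref{eq:RZmom}, $\psi(p,y)=0$ means $\Psi(y)=q^{-1}p|_{\lieh}$. Under $\lieg^*\cong\lieg$, the restriction of $x^\flat$ to $\lieh$ is the functional $\ka(x,\cdot)|_\lieh=\kah(P_\lieh x,\cdot)$. So the condition becomes $P_\lieh x=x_{\Psi(y)}$, which is meaningful because $\kah$ is non-degenerate (Lemma \ref{lem:H}). Since $\lieg=\lieh\oplus\lies$ by Lemma \ref{lem:H}, we get $x=x_{\Psi(y)}+\lambda$ with $\lambda\in\lies$ uniquely determined. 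Hence
\[
(q,\lambda,y)\longmapsto\bigl((q,x_{\Psi(y)}+\lambda),y\bigr)
\]
is a bijection $G\times\lies\times Y\to\psi^{-1}(0)$. It is a smooth embedding with smooth inverse $((q,x),y)\mapsto(q,x-P_\lieh x,y)$. This also shows directly that $\psi^{-1}(0)$ is a submanifold.

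Next I will compute the generator. By Lemma \ref{lem:invariance}, $h$ acts by $((q,x),y)\mapsto((qh^{-1},\Ad_hx),h(y))$. I differentiate at $h=\exp(t\xi)$, $t=0$. The base curve is $q\exp(-t\xi)$, whose left-trivialized velocity is $q^{-1}\dot q=-\xi$. The fibre curve is $\Ad_{\exp t\xi}x$, with derivative $[\xi,x]=-\ad_x\xi$. The $Y$-component is $\xi_Y(y)$ by definition. Together these give \eqref{eq:gen}. I need to check that the fibre coordinate $\eta=\dot x$ in \eqref{eq:ident} is exactly this derivative; this holds because the trivialization is $p\mapsto(q,q^{-1}p)$ and the right-lifted action acts on $x$ by $\Ad_h$.

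Finally, freeness and properness. The action on $\psi^{-1}(0)$ covers the action $q\mapsto qh^{-1}$ of $H$ on $G$. This action is free, so $h\cdot z=z$ forces $qh^{-1}=q$, hence $h=e$. For properness I will use that $H$ is closed in $G$. Right multiplication by a closed subgroup is proper, since $G\to G/H$ is a principal bundle. Suppose $z_n\to z$ and $h_nz_n\to z'$. Then $q_n\to q$ and $q_nh_n^{-1}\to q'$, so $h_n=(q_nh_n^{-1})^{-1}q_n$ converges in $G$, and the limit lies in $H$ by closedness. The $\lies$ and $Y$ components play no role, because properness of a map into a product is implied by properness of the first factor. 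Alternatively, the map $z\mapsto q$ is $H$-equivariant onto the proper free $H$-space $G$, and properness pulls back along equivariant continuous maps.

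I expect the main obstacle to be bookkeeping rather than conceptual. Specifically, I must keep the sign conventions consistent: left versus right trivialization, the identification $p\leftrightarrow x^\flat$, and the sign in $\psi$. These must line up so that the level set is exactly $P_\lieh x=x_{\Psi(y)}$ and the fibre component of the generator is $-\ad_x\xi$ rather than $+\ad_x\xi$. I will verify these once against the momentum identity $d\pair{\psi}{\xi}=\iota_{\xi_N}\omega_N$ using \eqref{eq:omega}. As a sanity check, the $T^*G$ part of $\iota_{\xi_N}\omega$ evaluated on $(\xi_2,\eta_2)$ should equal $-\ka(\xi,\eta_2)$, which is the differential of $-\ka(x,\xi)$.
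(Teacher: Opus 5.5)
Your proof is correct and follows the paper's argument essentially step for step. You solve $\kah(P_\lieh x,\cdot)=\Psi(y)$ via $\lieg=\lieh\oplus\lies$, differentiate $h\cdot((q,x),y)=((qh^{-1},\Ad_h x),h(y))$ to obtain \eqref{eq:gen}, and reduce freeness and properness to the right $H$-action on the $G$ factor; unlike the paper, which simply cites a reference for that last step, you also check smoothness of the parametrization and give the explicit sequence argument for properness.

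One caution about your closing sanity check: inserting $(-\xi,-\ad_x\xi)$ into \eqref{eq:omega} gives $+\ka(\xi,\eta_2)$, not $-\ka(\xi,\eta_2)$. This reflects a sign mismatch in the paper between $\omega_N=d\varpi+\omega_Y$, the convention $d\pair{\Psi}{\eta}=\iota_{\eta_Y}\omega_Y$ and the sign of $q^{-1}p|_{\lieh}$ in $\psi$; it is not an error in \eqref{eq:gen}, which is fixed by the action alone, so do not adjust your formulas because of that check (the lemma never uses the momentum identity).
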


\begin{proof}
	{Since $\ka(\lambda,\eta)=0$ for $\lambda\in\lies$ and $\eta\in\lieh$,  $$\ka(x,\eta)=\pair{\Psi(y)}{\eta}$$ for all $\eta\in\lieh$ reads
		$$\kah\bigl(P_\lieh x,\eta\bigr)=\pair{\Psi(y)}{\eta}\,.$$ It determines
		$P_\lieh x=x_{\Psi(y)}$ uniquely, by non-degeneracy of $\kah$, and leaves
		the component $\lambda:=x-P_\lieh x\in\lies$ free \textup(Lemma
		\ref{lem:H}\textup). Hence $x=x_{\Psi(y)}+\lambda$ and
		$\psi^{-1}(0)\cong G\times\lies\times Y$ with coordinates
		$(q,\lambda,y)$.}

	In the coordinates $(q,\nu)$,
	$\nu=q^{-1}p$, the action reads
	$$h\cdot(q,\nu,y)=(qh^{-1},\Ad^*_h\nu,h(y)),$$ whose generator at $z$ is
	$(-\xi,\ \ad^*_\xi\nu,\ \xi_Y(y))$; and
	$(\ad^*_\xi\nu)^\sharp=[\xi,x]=-\ad_x\xi$ by invariance of $\ka$.
	Freeness and properness hold already on the $T^*G$ factor \cite{bou}.
\end{proof}

\begin{theorem}\label{thm:gram}
	At $z=\bigl((q,x),y\bigr)\in\psi^{-1}(0)$ and for $\xi,\eta\in\lieh$,
	\begin{equation}\label{eq:B}
		B_z(\xi,\eta):=g_N\bigl(\xi_N(z),\eta_N(z)\bigr)
		=\ka\bigl(\fsym(\ad_x)\,\xi,\ \eta\bigr)
		\;+\;\Gform_y(\xi,\eta),
	\end{equation}
	where
	\begin{equation}\label{eq:Gcal}
		\Gform_y(\xi,\eta)=g_Y\bigl(\xi_Y(y),\eta_Y(y)\bigr)
		=-\bigl\langle d\Psi_y\bigl(J_Y\,\xi_Y(y)\bigr),\ \eta\bigr\rangle .
	\end{equation}
	For Loeb-type $G$ the operator $\fsym(\ad_x)$ is defined and invertible
	at every level point \textup(Lemma \ref{lem:loeb}\textup); note however
	that for $\lieh\subsetneq\lieg$ the first summand of \eqref{eq:B} is a
	\emph{compression} of $\fsym(\ad_x)$ to $\lieh$, since $\ad_x$ need not
	preserve $\lieh$.
\end{theorem}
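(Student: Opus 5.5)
Since $g_N=\gad\oplus g_Y$ by \eqref{eq:JN}, the Gram form splits into a $T^*G$ part and a $Y$ part. By Lemma \ref{lem:level}, the generator is $\xi_N(z)=\bigl((-\xi,-\ad_x\xi),\xi_Y(y)\bigr)$. Hence
\[
B_z(\xi,\eta)=\gad\bigl((-\xi,-\ad_x\xi),(-\eta,-\ad_x\eta)\bigr)+g_Y\bigl(\xi_Y(y),\eta_Y(y)\bigr).
\]
The second summand is $\Gform_y(\xi,\eta)$ by definition. The plan is to evaluate the first summand with the block formula \eqref{eq:g} of Proposition \ref{prop:blocks}, and then to rewrite $\Gform_y$ through the momentum map.

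For the $T^*G$ part I substitute $\xi_1=\xi$, $\eta_1=\ad_x\xi$, $\xi_2=\eta$, $\eta_2=\ad_x\eta$ into \eqref{eq:g}; the two minus signs cancel since the form is bilinear. This gives four terms:
\[
\ka(\fsym\xi,\eta)+\ka(\xi,\tsym\,\ad_x\eta)+\ka(\eta,\tsym\,\ad_x\xi)+\ka(\vsym\,\ad_x\xi,\ad_x\eta),
\]
with all symbols evaluated at $\ad_x$. I then move every operator to the first slot using \eqref{eq:symbolrules}, i.e.\ $\ka(F(A)u,v)=\ka(u,F(-A)v)$, together with the symmetry of $\ka$:
\begin{itemize}
\item[(a)] $\ka(\xi,\tsym(\ad_x)\ad_x\eta)=\ka\bigl(z\tsym(z)|_{z=\ad_x}\xi,\eta\bigr)$, because $z\tsym(z)$ is even;
\item[(b)] $\ka(\eta,\tsym\,\ad_x\xi)$ gives the same expression;
\item[(c)] $\ka(\vsym\,\ad_x\xi,\ad_x\eta)=-\ka\bigl(z^2\vsym(z)|_{z=\ad_x}\xi,\eta\bigr)=-\ka\bigl(2z\tsym(z)|_{z=\ad_x}\xi,\eta\bigr)$, using $z\vsym=2\tsym$.
\end{itemize}
Terms (a), (b) and (c) sum to zero, so the $T^*G$ part equals $\ka(\fsym(\ad_x)\xi,\eta)$. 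The sign bookkeeping in this cancellation is the only delicate point. I will check it at $x=0$, where \eqref{eq:g} reduces to $\ka\oplus\ka$: the generator becomes $(-\xi,0)$ and the formula gives $\ka(\xi,\eta)=\ka(\fsym(0)\xi,\eta)$, consistent with the claim.

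For \eqref{eq:Gcal} I use the convention $g_Y=\omega_Y(J_Y\cdot,\cdot)$ and antisymmetry:
\[
g_Y(\xi_Y,\eta_Y)=\omega_Y(J_Y\xi_Y,\eta_Y)=-\omega_Y(\eta_Y,J_Y\xi_Y).
\]
By Assumption \ref{ass:Y}, $\omega_Y(\eta_Y,v)=\iota_{\eta_Y}\omega_Y(v)=d\langle\Psi,\eta\rangle(v)=\langle d\Psi_y v,\eta\rangle$. Taking $v=J_Y\xi_Y(y)$ gives $\Gform_y(\xi,\eta)=-\langle d\Psi_y(J_Y\xi_Y(y)),\eta\rangle$.

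The remaining assertions are immediate. Definedness and invertibility of $\fsym(\ad_x)$ follow from Lemma \ref{lem:loeb}. The compression remark holds because $\ad_x$ with $x=x_{\Psi(y)}+\lambda$ need not preserve $\lieh$, so only $P_\lieh\fsym(\ad_x)|_\lieh$ appears in $B_z$.
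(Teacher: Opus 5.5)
Your proposal is correct and follows the paper's proof essentially step by step. Like the paper, you substitute the generator $\bigl(-\xi,-\ad_x\xi\bigr)$ into the block formula \eqref{eq:g} and cancel the three $\tsym$/$\vsym$ terms using the even function $z\tsym(z)$ and the identity $z\vsym(z)=2\tsym(z)$. You then obtain \eqref{eq:Gcal} from the momentum property together with the antisymmetry of $\omega_Y$, exactly as the paper does.
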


\begin{proof}
	{Put $A=\ad_x$. Inserting the $T^*G$-component $v_\xi=(-\xi,-A\xi)$ of
	\eqref{eq:gen} into \eqref{eq:g}:
	\[
	\gad(v_\xi,v_\eta)
	=\ka(\fsym(A)\xi,\eta)+\ka(\xi,\tsym(A)A\eta)+\ka(\eta,\tsym(A)A\xi)
	+\ka(\vsym(A)A\xi,A\eta).
	\]
	The last three terms cancel. Set $\varphi(z):=z\,\tsym(z)=z\tan\frac z2$,
	an even function, so that $\varphi(A)$ is $\ka$-symmetric by
	\eqref{eq:symbolrules}. The third term equals $\ka(\varphi(A)\xi,\eta)$ by
	symmetry of $\ka$; the second equals it by $\ka$-symmetry of $\varphi(A)$; and the fourth, using $z\,\vsym(z)=2\,\tsym(z)$ and
	$\ka(\zeta,A\eta)=-\ka(A\zeta,\eta)$ \textup(that is,
	\eqref{eq:symbolrules} for the odd function $F(z)=z$\textup), equals
	$-2\,\ka(\varphi(A)\xi,\eta)$.}
	
	For \eqref{eq:Gcal}:
	the momentum property gives
	$\pair{d\Psi(v)}{\eta}=\omega_Y(\eta_Y,v)$, whence
	$$-\pair{d\Psi(J_Y\xi_Y)}{\eta}
	=-\omega_Y(\eta_Y,J_Y\xi_Y)
	=\omega_Y(J_Y\xi_Y,\eta_Y)=g_Y(\xi_Y,\eta_Y).$$
\end{proof}

The same argument, applied to the $H$-action on $N$ itself,
packages the whole Gram form as a {momentum Hessian}. For any Hamiltonian action on a pseudo-K\"ahler manifold
	$(M,\omega,J,g=\omega(J\cdot,\cdot))$ with momentum map $\Phi$,
	\[
	g(\xi_M,\eta_M)=-\pair{d\Phi(J\,\xi_M)}{\eta} .
	\]
	In particular
	\begin{equation}\label{eq:package}
		B_z(\xi,\eta)=-\bigl\langle d\psi_z\bigl(J_N\,\xi_N(z)\bigr),\
		\eta\bigr\rangle
		\qquad\text{on }N .
	\end{equation}

\subsection{Equivalent conditions and a counterexample}\label{subsec:suff}

In the present setting the orbit non-degeneracy condition of Theorem \ref{thm:B6} reads
\begin{equation}\label{eq:star}\tag{$\star$}
	B_z\ \text{is non-degenerate for every }z\in\psi^{-1}(0).
\end{equation}
The following is a reformulation of Theorem \ref{thm:B6} in our setting.

\begin{theorem}\label{thm:suff}
	If \eqref{eq:star} holds, then $N/\!\!/H=\psi^{-1}(0)/H$ is a smooth manifold carrying the reduced pseudo-K\"ahler structure $\bigl(\gred,J_{\mathrm{red}},\omega_{\mathrm{red}}=\gred(\cdot,J_{\mathrm{red}}\cdot)\bigr)$ of Theorem \textup{\ref{thm:B6}}.
\end{theorem}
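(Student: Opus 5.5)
The plan is to verify, one at a time, the hypotheses of Theorem \ref{thm:B6} for $M=N$, $\Phi_M=\psi$ and the $H$-action $h\cdot(p,y)=(ph^{-1},h(y))$, and then simply quote that theorem.

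First, the pseudo-K\"ahler structure and its invariance. By Proposition \ref{prop:blocks} the triple $(\omega,\Jad,\gad)$ is pseudo-K\"ahler on $T^*G$. By Assumption \ref{ass:Y} the triple $(\omega_Y,J_Y,g_Y)$ is pseudo-K\"ahler on $Y$. Hence the product $(N,\omega_N,J_N,g_N)$ of \eqref{eq:JN} is pseudo-K\"ahler, and $g_N=\omega_N(J_N\cdot,\cdot)$ holds blockwise. On the $T^*G$ factor, $H$ acts by the right action $R_h$, which preserves $\omega$, $\Jad$ and $\gad$ by Lemma \ref{lem:invariance}. On $Y$ it preserves $\omega_Y$ and $J_Y$ by assumption. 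So the diagonal action preserves $\omega_N$, $J_N$ and $g_N$.

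Second, the momentum map. The map $\psi$ in \eqref{eq:RZmom} is equivariant, since it is the sum of the cotangent-lift momentum map for the right action restricted to $\lieh$ and the equivariant map $\Psi$. A one-line check with $\varpi$ and the momentum property of $\Psi$ confirms $d\pair{\psi}{\xi}=\iota_{\xi_N}\omega_N$, keeping track of the sign convention used in \eqref{eq:gen}. Freeness and properness of the action on $\psi^{-1}(0)$ are already part of Lemma \ref{lem:level}: they hold on the $T^*G$ factor because $H$ is closed and acts freely and properly on $G$ by right translation.

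Third, the orbit non-degeneracy \eqref{eq:B6hyp}. The tangent space $T_z(H\cdot z)$ is the image of the map $\xi\mapsto\xi_N(z)$. This map is injective, because its $T^*G$ component contains $-\xi$, by \eqref{eq:gen}. Therefore $g_N|_{T_z(H\cdot z)}$ is non-degenerate if and only if its pullback $B_z$ of Theorem \ref{thm:gram} is non-degenerate on $\lieh$, and this is exactly $(\star)$.

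Theorem \ref{thm:B6} then gives that $\psi^{-1}(0)/H$ is a smooth manifold carrying a reduced pseudo-Riemannian metric $\gred$ and a complex structure $J_{\mathrm{red}}$, with $\omega_{\mathrm{red}}=\gred(\cdot,J_{\mathrm{red}}\cdot)$ symplectic. The remaining point is that this form coincides with the Marsden--Weinstein reduced form of $\omega_N$, and that the convention $\omega=g(\cdot,J\cdot)$ of Theorem \ref{thm:B6} is consistent with our $g=\omega(J\cdot,\cdot)$. The consistency follows from $J^2=-\Id$ and the compatibility identity $\omega=\gad(\cdot,\Jad\cdot)$ recorded in Proposition \ref{prop:blocks}. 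Identifying the two reduced forms is immediate, since both pull back to $\omega_N|_{\psi^{-1}(0)}$.

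I expect no real obstacle here. The only delicate points are these sign and convention checks and the injectivity of $\xi\mapsto\xi_N(z)$, which is needed so that non-degeneracy of $B_z$ is equivalent to non-degeneracy of $g_N$ on the orbit tangent space.
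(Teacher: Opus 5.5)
Your proposal is correct and is essentially the paper's own argument: $N$ is pseudo-K\"ahler with $H$ acting by automorphisms (Lemma \ref{lem:invariance} and Assumption \ref{ass:Y}), and $\psi$ is an equivariant momentum map whose level carries a free and proper action (Lemma \ref{lem:level}), so \eqref{eq:star} is the only remaining hypothesis of Theorem \ref{thm:B6}. Your remark that $\xi\mapsto\xi_N(z)$ is injective, so that $B_z$ is exactly the pullback of $g_N|_{T_z(H\cdot z)}$, spells out the identification of \eqref{eq:star} with \eqref{eq:B6hyp}, which the paper takes for granted.
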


%\begin{proof}	$(N,\omega_N,J_N,g_N)$ is pseudo-K\"ahler, the $H$-action preserves all structures, by Lemma \ref{lem:invariance} and Assumption \ref{ass:Y}. Furthermore $\psi$ is its equivariant momentum map \eqref{eq:RZmom}, and freeness and properness on the level hold by Lemma \ref{lem:level}. Thus \eqref{eq:star} is the only hypothesis to apply Theorem \ref{thm:B6}.\end{proof}

Fix $z\in\psi^{-1}(0)$ and set $V:=T_z(H\cdot z)$; by freeness, $\lieh\cong V$ via $\xi\mapsto\xi_N(z)$, and $\dim V=\dim\lieh$. In the following Proposition we collect some equivalent conditions for \eqref{eq:star}.

\begin{proposition}\label{prop:TFAE}
	The following are equivalent:
	\begin{enumerate}
		\item[\textup{(i)}] $g_N|_V$ is non-degenerate \textup(condition \eqref{eq:star} at $z$\textup);
		\item[\textup{(ii)}] the $\kah$-symmetric operator $\mathfrak B_z\in\End(\lieh)$ defined by $$B_z(\xi,\eta)=\kah(\mathfrak B_z\,\xi,\eta)$$ is invertible;
		\item[\textup{(iii)}] the linear map $\lieh\ni\xi\longmapsto d\psi_z\bigl(J_N\,\xi_N(z)\bigr)\in\lieh^*$ is bijective;
		\item[\textup{(iv)}] $J_N$-transversality: $T_zN=T_z\psi^{-1}(0)\ \oplus\ J_N V$;
		\item[\textup{(v)}] $W:=V+J_NV$ is an $\omega_N$-symplectic \textup(equivalently $g_N$-non-degenerate\textup) $J_N$-invariant subspace of $T_zN$; and in that case $T_z\psi^{-1}(0)=V\oplus\mathcal H_z$, where $\mathcal H_z:=W^{\perp_{g_N}}$ is the $J_N$-invariant horizontal space.
	\end{enumerate}
\end{proposition}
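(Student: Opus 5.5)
The plan is to reduce every condition to the single bilinear form $B_z$ on $\lieh$, using the freeness isomorphism $\lieh\cong V$, $\xi\mapsto\xi_N(z)$, and the momentum identity \eqref{eq:package}. The order is (i)$\Leftrightarrow$(ii)$\Leftrightarrow$(iii), then (iii)$\Leftrightarrow$(iv), and finally (iv)$\Leftrightarrow$(v) together with the decomposition of $T_z\psi^{-1}(0)$.

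For (i)$\Leftrightarrow$(ii): under the isomorphism $\lieh\cong V$, the restriction $g_N|_V$ is exactly $B_z$. Since $\kah$ is non-degenerate (Assumption \ref{ass:H}, Lemma \ref{lem:H}), the operator $\mathfrak B_z$ is well defined. It is $\kah$-symmetric because $B_z$ is symmetric, and $B_z$ is non-degenerate iff $\mathfrak B_z$ is invertible. For (ii)$\Leftrightarrow$(iii): by \eqref{eq:package}, the map $L:\xi\mapsto d\psi_z(J_N\xi_N(z))$ satisfies $\langle L\xi,\eta\rangle=-B_z(\xi,\eta)$. So $L$, a map between spaces of equal dimension, is bijective iff $B_z$ is non-degenerate.

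For (iii)$\Leftrightarrow$(iv): freeness makes $0$ a regular value near $z$, so $d\psi_z:T_zN\to\lieh^*$ is surjective with kernel $T_z\psi^{-1}(0)$. Indeed, by the momentum identity $\operatorname{im}d\psi_z=\operatorname{ann}(\lieh_z)$, and the stabilizer $\lieh_z$ is zero (Lemma \ref{lem:level}). Hence $\operatorname{codim}T_z\psi^{-1}(0)=\dim\lieh=\dim J_NV$, and $T_zN=T_z\psi^{-1}(0)\oplus J_NV$ holds iff $J_NV\cap\ker d\psi_z=0$ and $\dim J_NV=\dim\lieh$. The second condition is automatic since $J_N$ is invertible. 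The first says $d\psi_z|_{J_NV}$ is injective, equivalently that $L$ is bijective.

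For (iv)$\Leftrightarrow$(v): the key identity is $g_N(J_N\xi_N,\eta_N)=\omega_N(J_N^2\xi_N,\eta_N)=-\omega_N(\xi_N,\eta_N)$, and this vanishes on the level set. Indeed, $\omega_N(\xi_N,\eta_N)=\langle d\psi(\eta_N),\xi\rangle$ up to sign, and $V\subset T_z\psi^{-1}(0)$. So $V\perp_{g_N}J_NV$. Also $g_N|_{J_NV}\cong g_N|_V$ by $J_N$-invariance of $g_N$.

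Assume (i). Then $V\cap J_NV=0$: if $v=J_Nv'$ with $v,v'\in V$, then $g_N(v,\cdot)|_V=g_N(J_Nv',\cdot)|_V=0$, so $v=0$. Hence the Gram matrix of $W=V\oplus J_NV$ is block diagonal with two non-degenerate blocks, and $W$ is non-degenerate. It is $J_N$-invariant since $J_N^2=-1$. For the symplectic claim, note that $\omega_N=g_N(\cdot,J_N\cdot)$ and $J_NW=W$, so non-degeneracy of $g_N|_W$ and of $\omega_N|_W$ are equivalent.

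Conversely, if $W$ is $g_N$-non-degenerate, then any $v\in V$ with $g_N(v,V)=0$ also satisfies $g_N(v,J_NV)=0$, so $v\in\operatorname{rad}(g_N|_W)=0$, and (i) holds. This route uses (i) as a pivot. I expect this step to need the most care, namely the orthogonality $V\perp J_NV$ and $V\cap J_NV=0$.

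For the decomposition, $\mathcal H_z=W^{\perp_{g_N}}$ is $J_N$-invariant and complementary to $W$. Moreover $T_z\psi^{-1}(0)=(J_NV)^{\perp_{g_N}}$. To see this, note that $g_N(u,J_N\xi_N)=-\omega_N(u,\xi_N)$ is, up to sign, $\langle d\psi_z(u),\xi\rangle$. Then $(J_NV)^{\perp}=V\oplus\mathcal H_z$, since $V\perp J_NV$ and $W=V\oplus J_NV$ is non-degenerate. This gives $T_z\psi^{-1}(0)=V\oplus\mathcal H_z$.
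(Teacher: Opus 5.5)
Your proof is correct and follows the paper's argument essentially step for step. Both use the isotropy of $V$ to get $V\perp_{g_N}J_NV$, identity \eqref{eq:package} for the equivalence with (iii), submersivity of $\psi$ (by freeness) for (iv), and (i) as the pivot for (v), concluding with $T_z\psi^{-1}(0)=(J_NV)^{\perp_{g_N}}=V\oplus\mathcal H_z$. Your explicit check that $V\cap J_NV=0$ under (i) usefully fills in a point that the paper's identification $g_N|_W\cong g_N|_V\oplus g_N|_V$ leaves implicit; separately, the correct sign is $g_N(u,J_N\xi_N)=+\,\omega_N(u,\xi_N)$, though this is immaterial since only kernels matter.
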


\begin{proof}
	First, two automatic facts. Since the orbit lies inside the level, $$\omega_N(\xi_N,\eta_N)=\pair{d\psi(\eta_N)}{\xi}=0\,,$$ then $\omega_N|_V=0\,.$ Hence $g_N(\xi_N,J_N\eta_N)=\omega_N(J_N\xi_N,J_N\eta_N)=\omega_N(\xi_N,\eta_N)=0$: $V\perp_{g_N}J_NV$.
	
	(i)$\Leftrightarrow$(ii): definition of $\mathfrak B_z$, using non-degeneracy of $\kah$ (Lemma \ref{lem:H}). (i)$\Leftrightarrow$(iii): by \eqref{eq:package} the kernel of the map in (iii) is exactly the radical of $B_z$. (iii)$\Leftrightarrow$(iv): $J_Nv\in T_z\psi^{-1}(0)=\Ker d\psi_z$ iff $d\psi_z(J_Nv)=0$, so injectivity in (iii) says $J_NV\cap T_z\psi^{-1}(0)=0$; and $\dim J_NV=\dim\lieh$ equals the codimension of the level, since by freeness the momentum map is submersive there. (i)$\Leftrightarrow$(v): $V\perp_{g_N}J_NV$ and $g_N|_{J_NV}\cong g_N|_V$ ($J_N$ isometry) give $g_N|_W\cong g_N|_V\oplus g_N|_V$, so the two degenerate together; and on the $J_N$-invariant $W$, $\rad_{\omega_N}(W)=J_N\rad_{g_N}(W)$, so $\omega_N|_W$ and $g_N|_W$ degenerate together as well. When these conditions hold, $T_zN=W\oplus W^{\perp_{g_N}}$, the horizontal $\mathcal H_z$ is $J_N$-invariant with $g_N|_{\mathcal H_z}$ non-degenerate, and $T_z\psi^{-1}(0)=(J_NV)^{\perp_{g_N}}=V\oplus\mathcal H_z$.
\end{proof}

\begin{remark}\label{rem:kahlercase}
	If $g_N$ is definite --- $\ka$ definite and $Y$ genuinely K\"ahler --- every restriction of $g_N$ is non-degenerate and all five conditions hold automatically: this is why the classical K\"ahler quotient carries no such hypothesis. In indefinite signature they are genuine conditions, as the following example shows.
\end{remark}

\begin{example}[failure of \eqref{eq:star}]\label{ex:counter}
	Take everything flat: $G=\R^2$ (abelian, hence of Loeb type: $\ad\equiv0$), $\ka=\mathrm{diag}(1,-1)$, $H=\R e_1$, so that $\kah=1>0$ and \eqref{eq:H} holds; the polar map is the global diffeomorphism $T^*\R^2\cong\C^2$ and the adapted structure is the standard flat one. Take
	\[
	Y=\R^2,\qquad \omega_Y=dx\wedge dy,\qquad J_Y=J_{\mathrm{std}},
	\]
	and
	\[
	g_Y=\omega_Y(J_Y\cdot,\cdot)=-(dx^2+dy^2),
	\]
	a flat pseudo-K\"ahler structure of signature $(0,2)$, with $H$ acting by rotations $t\cdot v=e^{t\alpha J_{\mathrm{std}}}v$, $\alpha>0$: rotations preserve $\omega_Y$ and $J_Y$, and $\pair{\Psi(v)}{1}=-\tfrac{\alpha}{2}|v|^2$ is an equivariant momentum map, so Assumption \ref{ass:Y} holds. The generator of $\xi=1$ is $\xi_Y(v)=\alpha J_{\mathrm{std}}v$, so
	\[
	\Gform_v(1,1)=g_Y(\xi_Y,\xi_Y)=\alpha^2 g_Y(v,v)=-\alpha^2|v|^2 ,
	\]
	while the $T^*G$ part of \eqref{eq:B} is $\ka(\fsym(0)e_1,e_1)=\ka(e_1,e_1)=1$ at every point of the level, which contains points over every $v\in Y$ : explicitly, $$\psi^{-1}(0)=\bigl\{(q,x,v):x_{1}=-\tfrac\alpha2|v|^{2}\bigr\}\cong G\times\lies\times Y,$$ with free coordinates $(q,\lambda=x_{2}e_{2},v)$ as in Lemma \ref{lem:level}. Hence
	\[
	B_z\;=\;1-\alpha^2|v|^2 ,
	\]
	which vanishes exactly on the non-empty hypersurface $\{|v|=1/\alpha\}$ of $\psi^{-1}(0)$: condition \eqref{eq:star} \emph{fails}, although $G$ is of Loeb type, $Y$ is a pseudo-K\"ahler Hamiltonian $H$-space, and \eqref{eq:H} holds. With the opposite orientation, i.e. with $J_Y$ equal to $-J_{\mathrm{std}}$, then $Y$ becomes genuinely K\"ahler, $\Gform_v=+\alpha^2|v|^2$ and $B_z=1+\alpha^2|v|^2>0$ everywhere. 
	%In particular, in indefinite signature no purely algebraic condition on $(\lieg,\lieh,\ka)$ can imply \eqref{eq:star}: it must be verified along the level set, through any of the equivalent forms of Proposition \ref{prop:TFAE}.
\end{example}

\begin{example}[$Y$ a point: the case of $T^*(G/H)$]\label{ex:TGH}
	For $Y=\{pt\}$ one has $\Psi=0$ and $\Gform\equiv0$, the level set is $\psi^{-1}(0)=\{(q,\lambda):\lambda\in\lies\}$ in the coordinates \eqref{eq:ident}, and $N/\!\!/H=T^*G/\!\!/H=T^*(G/H)$, the Marsden--Weinstein quotient at $0\in\lieh^*$ \cite[Theorem 2.2.2]{Marsden:2007}; condition \eqref{eq:star} asks that  $$\ka\bigl(\fsym(\ad_\lambda)\,\cdot\,,\cdot\bigr)\big|_{\lieh\times\lieh}$$ be non-degenerate for every $\lambda\in\lies$. This may fail. Take the oscillator group of Section \ref{sec:oscillatorexample} and
	\[
	\lieh=\R\,(H-E),\qquad \kah=\ka_e(H-E,H-E)=-2\neq0 ,
	\]
	a closed one-parameter subgroup for which \eqref{eq:H} holds, with
	$$\lies=\lieh^{\perp_{\ka}}=\{\,a(H+E)+bP+cQ\,\}\,.$$ For $x=a(H+E)+bP+cQ$ the operator $\ad_x$ has minimal polynomial $\lambda(\lambda^{2}+a^{2})$, whence
	\[
	\fsym(\ad_x)=\Id-h(a)\,\ad_x^{2},
	\qquad
	h(a):=\frac{a\coth a-1}{a^{2}}\quad\bigl(h(0)=\tfrac13\bigr),
	\]
	and, since $\ad_x^{2}(H-E)=a\,(bP+cQ)-(b^{2}+c^{2})\,E$,
	\[
	B_z(H-E,\,H-E)
	=\ka_e\bigl(\fsym(\ad_x)(H-E),\,H-E\bigr)
	=h(a)\,\bigl(b^{2}+c^{2}\bigr)-2 .
	\]
	This vanishes on the non-empty hypersurface $\{h(a)(b^{2}+c^{2})=2\}$ of the level set (e.g.\ $a=0$, $b^{2}+c^{2}=6$): condition \eqref{eq:star} \emph{fails}, and $T^*(G/H)$ does not inherit the reduced pseudo-K\"ahler structure globally. Note, for later use, that the failure disappears as soon as $\lieh$ contains the centre with a hyperbolic pairing: compare Example \ref{ex:oscillator}.
\end{example}

Eventually, we give a simple definite-signature criterion whose proof is left to the reader.

\begin{proposition}\label{prop:definite}
	If $\ka$ is definite \textup(then $G$ is automatically of Loeb type and \eqref{eq:H} holds for every $H$\textup) and $g_Y$ is positive semi-definite on the orbit directions \textup(in particular if $Y$ is genuinely K\"ahler\textup), then $B_z\succeq\kah>0$ at every level point, and \eqref{eq:star} holds.
\end{proposition}

%\begin{proof}	For definite $\ka$, every $\ad_x$ is skew-adjoint, so $\Spec(\ad_x)\subseteq i\R$ and $\fsym(\ad_x)-\Id=h(\ad_x)$ with $h(it)=t\coth t-1\geq0$ even: the operator $\fsym(\ad_x)-\Id$ is positive semi-definite, whence $\ka(\fsym(\ad_x)\xi,\xi)\geq\ka(\xi,\xi)$ for every $\xi\in\lieh$, while $\Gform_y\geq0$ by hypothesis.\end{proof}

\section{Application: the coadjoint orbit}\label{sec:application}

We now specialize the results of Section \ref{sec:polar} to 
\[
Y=\orb=\Ad(G)\mu\subseteq\lieg ,
\]
where we fix a bilinear form $\kappa_e$ as in Section \ref{sec:orbits}, and with the KKS form \eqref{eq:kks} and with the canonical invariant pseudo-K\"ahler structure $(J,\gamma)$ of Theorem \ref{thm:orbit}. Here, we always assume the hypotheses \textup{(K1)} and \textup{(K2)} at $\mu$ (note that \textup{(K2)} is implied by the standing Loeb hypothesis on $G$). The orbit is regarded as an $H$-space for a closed connected subgroup
\[
G_\mu\subseteq H\subseteq G
\]
satisfying \eqref{eq:H}. First, we check that $(\orb,\omega)$, together with the structure of Theorem \ref{thm:orbit}, satisfies Assumption \ref{ass:Y}. Then, we study pseudo-K\"ahler induction for $Y=\orb$.

%\subsection{The orbit as a pseudo-K\"ahler Hamiltonian space}
%\subsection{The case of orbits}

\begin{proposition}\label{prop:orbitH}
	For every closed subgroup $H\subseteq G$, the orbit $\orb$, with the KKS form $\omega$ of \eqref{eq:kks}, the restricted action $h\cdot\nu=\Ad_h\nu$, and the map
	\begin{equation}\label{eq:Psiorb}
		\Psi:\orb\longrightarrow\lieh^*,\qquad
		\pair{\Psi(\nu)}{\eta}:=-\,\kappa_e(\nu,\eta)\quad(\eta\in\lieh),
	\end{equation}
	is a Hamiltonian $H$-space; together with $J_Y:=J$ and $g_Y:=\gamma=\omega(J\cdot,\cdot)$ from Theorem \ref{thm:orbit}, it satisfies Assumption \ref{ass:Y}. %The sign in \eqref{eq:Psiorb} is dictated by the convention \eqref{eq:kks}.
\end{proposition}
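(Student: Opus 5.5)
The plan is to verify the items of Assumption \ref{ass:Y} one at a time. All the geometric structure comes from Theorem \ref{thm:orbit}, so the only new computations concern the momentum map \eqref{eq:Psiorb}.

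First, the structural part. By Theorem \ref{thm:orbit}(1)--(3), $(\orb,\omega,J,\gamma)$ is pseudo-K\"ahler:
\begin{itemize}
\item $J$ is integrable and $\omega$-compatible;
\item $\gamma=\omega(J\cdot,\cdot)$ is non-degenerate, of signature $\sgn(\kappa_e|_{\g_\mu^\perp})$.
\end{itemize}
The KKS form is $\Ad(G)$-invariant. Theorem \ref{thm:orbit}(1) gives $J_{\Ad_a\nu}=\Ad_a J_\nu\Ad_a^{-1}$ for all $a\in G$, which says exactly that $\Ad_a$ is $J$-holomorphic. Restricting to $a\in H$ shows that $H$ preserves $\omega$ and $J$, hence also $\gamma$. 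No hypothesis on $H$ is used here beyond $H\subseteq G$, consistent with the statement ``for every closed subgroup''.

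Second, the momentum property $d\pair{\Psi}{\eta}=\iota_{\eta_Y}\omega$. The fundamental vector field of $\eta\in\lieh$ at $\nu$ is $\eta_Y(\nu)=[\eta,\nu]$. Every tangent vector has the form $v=[\xi,\nu]$, and since $\Psi$ is the restriction of a linear map,
\[
d\pair{\Psi}{\eta}_\nu(v)=-\kappa_e([\xi,\nu],\eta).
\]
On the other side, \eqref{eq:kks} gives
\[
(\iota_{\eta_Y}\omega)_\nu(v)=\omega_\nu([\eta,\nu],[\xi,\nu])=-\kappa_e(\nu,[\eta,\xi]).
\]
The two expressions agree by one application of \eqref{eq:inv}:
\[
\kappa_e([\xi,\nu],\eta)=-\kappa_e(\nu,[\xi,\eta])=\kappa_e(\nu,[\eta,\xi]).
\]
The only point needing care is the sign bookkeeping, namely matching the sign in \eqref{eq:Psiorb} with the convention \eqref{eq:kks}. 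I expect this to be the main, though mild, obstacle, so I would double-check it against the explicit oscillator computation of Section \ref{sec:oscillatorexample}.

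Third, equivariance. For $h\in H$, using $\Ad(G)$-invariance of $\kappa_e$,
\[
\pair{\Psi(\Ad_h\nu)}{\eta}=-\kappa_e(\Ad_h\nu,\eta)=-\kappa_e(\nu,\Ad_{h^{-1}}\eta)=\pair{\Ad^*_h\Psi(\nu)}{\eta}.
\]
So $\Psi$ is $H$-equivariant, with no connectedness of $H$ required. Smoothness of $\Psi$ is clear, because it is the restriction to the immersed orbit of the linear map $\nu\mapsto-\kappa_e(\nu,\cdot)|_\lieh$.

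Collecting these facts, $(\orb,\omega,J,\Psi)$ is a Hamiltonian $H$-space satisfying Assumption \ref{ass:Y}.
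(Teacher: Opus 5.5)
Your proposal is correct and follows the paper's proof essentially step by step. It takes invariance of $\omega$, $J$, $\gamma$ from Theorem \ref{thm:orbit}, gets the momentum identity from one application of \eqref{eq:inv}, and gets equivariance from $\Ad(G)$-invariance of $\kappa_e$. Your sign bookkeeping is already right, so the check against the oscillator example is unnecessary. The only point left implicit, which the paper states, is that $\Ad_{h^{-1}}\eta\in\lieh$ for $h\in H$, so that $\Ad^*_h$ acts on $\lieh^*$.
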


\begin{proof}
	Invariance of $\omega$, $J$, $\gamma$ under $H\subseteq G$ is Theorem \ref{thm:orbit}. Furthermore, for $\eta\in\lieh$ and a tangent vector $v=[\zeta,\nu]$, Lemma \ref{lem:orbitalg}(1) gives
	\begin{align*}
	\pair{d\Psi_\nu(v)}{\eta}&=-\kappa_e([\zeta,\nu],\eta)
	=\kappa_e(\nu,[\zeta,\eta])
	=-\kappa_e(\nu,[\eta,\zeta]) \\
	&=\omega\bigl([\eta,\nu],[\zeta,\nu]\bigr)
	=\bigl(\iota_{\eta_{\orb}}\omega\bigr)(v),
	\end{align*}
	the fundamental vector field of $\eta$ being $\eta_{\orb}(\nu)=[\eta,\nu]$. The equivariance follows easily: for $a\in H$ one has $\Ad_a\lieh=\lieh$ and $$\pair{\Psi(\Ad_a\nu)}{\eta}=-\kappa_e(\Ad_a\nu,\eta)=-\kappa_e(\nu,\Ad_{a^{-1}}\eta)=\pair{\Ad^*_a\Psi(\nu)}{\eta},$$ by $\Ad$-invariance of $\kappa_e$. Finally $g_Y=\gamma$ is non-degenerate and $J$ integrable by Theorem \ref{thm:orbit}.
\end{proof}

Thus the specialization makes sense for \emph{every} closed $H\subseteq G$; the requirements are \textup{(K1)}--\textup{(K2)}, which produce the canonical $(J,\gamma)$. The inclusion $G_\mu\subseteq H$ is what makes the following elementary observation available.

\begin{lemma}\label{lem:role}
	Assume $G_\mu\subseteq H$. Then $\mu\in\liegmu\subseteq\lieh$; consequently $[\mu,\lieh]\subseteq\lieh$ and, under \eqref{eq:H}, $\kappa_e(\mu,\lies)=0$ with $\lies=\lieh^{\perp_{\ka}}$. Moreover the $H$-orbit through $\mu$ is $H(\mu)\cong H/G_\mu$, embedded in $\orb\cong G/G_\mu$ with the same stabilizer, and by \cite[Prop.~3.1(a)]{RatiuZiegler} the orbit $\orb$ meets the image of the momentum map of the induced space $\Ind_H^G\orb=N/\!\!/H$.
\end{lemma}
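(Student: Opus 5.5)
The lemma collects several elementary facts, and the plan is to check them in order.

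\emph{Step 1: $\mu\in\liegmu\subseteq\lieh$.} Since $[\mu,\mu]=0$ (Remark \ref{rem:zeroalways}), we have $\mu\in\ker\ad_\mu=\liegmu$. The inclusion $G_\mu\subseteq H$ of closed subgroups gives $\liegmu\subseteq\lieh$ on Lie algebras. Connectedness of $H$ is not needed here; only the tangent spaces at $e$ matter.

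\emph{Step 2: the algebraic consequences.} Since $\mu\in\lieh$ and $\lieh$ is a subalgebra, $[\mu,\lieh]\subseteq\lieh$. For $\lambda\in\lies=\lieh^{\perp_\ka}$ we have $\kappa_e(\mu,\lambda)=0$ directly from the definition of $\lies$, because $\mu\in\lieh$. Hypothesis \eqref{eq:H} is used only to make $\lies$ a complement of $\lieh$ (Lemma \ref{lem:H}), so that the statement has content. Stated equivalently, $P_\lieh\mu=\mu$. This is what will let $\mu$ play the role of $x_{\Psi(\mu)}$, up to the sign in \eqref{eq:Psiorb}, in the level-set description of Lemma \ref{lem:level}.

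\emph{Step 3: the $H$-orbit.} The stabilizer of $\mu$ for the restricted $\Ad(H)$-action is $H\cap G_\mu=G_\mu$, since $G_\mu\subseteq H$. Hence the orbit map induces an injective immersion $H/G_\mu\to\orb$, and $H(\mu)\cong H/G_\mu$. Composing with the natural map $H/G_\mu\hookrightarrow G/G_\mu$ recovers the inclusion $H(\mu)\subseteq\orb\cong G/G_\mu$. Because $H$ is closed, $H/G_\mu$ is a closed embedded submanifold of $G/G_\mu$: the $H$-orbit of the base point $eG_\mu$ is closed under the proper $H$-action by left multiplication, as $H\cdot eG_\mu$ is the image of the closed set $H$ under the quotient map. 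Transporting through the diffeomorphism $G/G_\mu\cong\orb$ gives the claimed embedding with the same stabilizer.

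\emph{Step 4: the momentum image.} This part is quoted from \cite[Prop.~3.1(a)]{RatiuZiegler}. That result identifies the image of the $G$-momentum map of $\Ind_H^G Y$ with $\Ad^*(G)$ applied to the set of $\nu\in\lieg^*$ with $\nu|_\lieh\in\Psi(Y)$. To make the meeting explicit, I would exhibit a point. Take $y=\mu$ and $q=e$, and choose $x\in\lieg$ with $P_\lieh x$ equal to the $\kah$-dual of $\Psi(\mu)$; by \eqref{eq:Psiorb} and Step 2 this is $x=-\mu$ with $\lambda=0$. Then $\phi([(e,x),\mu])=x$ under the identification. The sign must be checked against the convention \eqref{eq:RZmom}. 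If $x$ comes out as $-\mu$ rather than $\mu$, I would instead compare $\phi$ with the image orbit under the opposite convention adopted there.

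The main obstacle is this sign bookkeeping between \eqref{eq:Psiorb}, \eqref{eq:RZmom} and \eqref{eq:kks}, together with being sure the cited proposition applies verbatim under $\kappa_e$-identifications. Steps 1–3 are routine.
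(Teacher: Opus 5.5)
The genuine gap is in Step 4. Your computation is correct: the witness $\bigl((e,-\mu),\mu\bigr)$ is sent by $\phi$ to $-\mu$. But you leave that discrepancy open, and it cannot be closed within the paper's formulas.

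Take \eqref{eq:RZmom} and \eqref{eq:Psiorb} literally. A level point over $\nu\in\orb$ then has $P_\lieh x=-P_\lieh\nu$ and $\phi=\Ad_q x$. So the momentum image contains $-\orb$, and it can miss $\orb$ altogether. For the oscillator group, take $\mu=H+E$ and any intermediate subgroup containing $G_\mu$, so that $E\in\lieh$. The functional $\kappa_e(\cdot,E)$ is $\Ad(G)$-invariant because $E$ is central. It equals $\kappa_e(x,E)=-\kappa_e(\nu,E)=-1$ on the whole image, and $+1$ on $\orb$.

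The sign comes from a clash of conventions. \eqref{eq:RZmom} is a momentum map for $\omega=d\varpi$ under $\iota_{\zeta_{T^*G}}\omega=-d\pair{\phi}{\zeta}$. Already for $G=\R^n$ one has $\iota_{(\zeta,0)}(dx\wedge dq)=-\zeta\,dx$ while $\phi=x$. Assumption \ref{ass:Y}, by contrast, imposes $d\pair{\Psi}{\eta}=\iota_{\eta_Y}\omega_Y$, and that is exactly what puts the minus sign into \eqref{eq:Psiorb}.

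To close the gap you must use one convention throughout. Take the Ratiu--Ziegler one, which \eqref{eq:RZmom} already obeys. Then the KKS form \eqref{eq:kks} has the inclusion as momentum map, i.e.\ $\pair{\Psi(\nu)}{\eta}=+\kappa_e(\nu,\eta)$. The point $\bigl((e,\mu),\mu\bigr)$ lies in $\psi^{-1}(0)$, and $\phi$ sends its class to $\mu\in\orb$. This uses neither $G_\mu\subseteq H$ nor Step 2, since $x=\mu$ satisfies the level condition $x|_\lieh=\mu|_\lieh$ for every $H$. If instead you keep \eqref{eq:Psiorb}, what is true is that $-\orb$ lies in the image; under that convention $-\orb$ is the $G$-momentum image of $\orb$ itself.

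The paper offers no argument beyond the citation, which tacitly uses the Ratiu--Ziegler sign. Consistently with this, the proof of Theorem \ref{thm:reproduce} identifies $\Ind_G^G\orb$ with $\orb$ through $\tilde\pi=\Ad_q\nu=-\phi$, not through $\phi$.

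Steps 1--3 are fine apart from one slip in Step 3. The left $H$-action on $G/G_\mu$ need not be proper: $G_\mu=\exp\operatorname{span}\{H,E\}$ is noncompact in the oscillator example. Also, images of closed sets under $\pi\colon G\to G/G_\mu$ need not be closed. Use instead that $\pi^{-1}(\pi(H))=HG_\mu=H$ is closed, and that $H/G_\mu$ is the fibre over $eH$ of the bundle $G/G_\mu\to G/H$. Hence it is a closed embedded submanifold.
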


%\begin{remark}[which algebraic hypothesis?]\label{rem:which}	The two conditions play different and complementary roles:	\begin{itemize}		\item \textup{(K1)}: $\liegmu\cap\liegmu^{\perp_{\ka}}=\{0\}$ is a hypothesis on the \emph{fibre}: together with \textup{(K2)} it is exactly what guarantees the existence of the canonical invariant complex structure $J$ on $\orb$ (Theorem \ref{thm:orbit}); it enters nowhere else.		\item $\eqref{eq:H}$: $\lieh\cap\lieh^{\perp_{\ka}}=\{0\}$ is the hypothesis of the \emph{induction framework}: it provides the splitting $\lieg=\lieh\oplus\lies$, the parametrization of the level set (Lemma \ref{lem:level}), and the operator formulation of condition \eqref{eq:star} (Proposition \ref{prop:TFAE}(ii)).	\end{itemize}	Both are therefore imposed here, each for its own purpose; neither implies the other, and --- as Example \ref{ex:counter} shows --- neither, nor any other purely algebraic condition, implies \eqref{eq:star}, which remains the condition to be verified.\end{remark}

%\subsection{The Gram form for the orbit}

\begin{lemma}\label{lem:levelorb}
	With $\Psi$ as in \eqref{eq:Psiorb}, the level set of Lemma \ref{lem:level} is
	\[
	\psi^{-1}(0)=\Bigl\{\bigl((q,\,-\nu+\lambda),\,\nu\bigr):\ q\in G,\ \nu\in\orb,\ \lambda\in\lies\Bigr\}
	\]
	in the coordinates \eqref{eq:ident}. 
	%{\color{red}The group $(\text{left }G)\times H$ acts on it by $g_N$-isometries, the left $G$-factor acting transitively on the $q$-coordinate, and the orbit space of this action is parametrized by} $(\orb\times\lies)/H$, where $h\in H$ acts by $(\nu,\lambda)\mapsto(\Ad_h\nu,\Ad_h\lambda)$.
	The generator of $\xi\in\lieh$ at $z=\bigl((e,x),\nu\bigr)$, $x=-\nu+\lambda$, is $\xi_N(z)=\bigl((-\xi,-\ad_x\xi),\,[\xi,\nu]\bigr)$.
\end{lemma}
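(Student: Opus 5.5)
The plan is to specialize Lemma \ref{lem:level} to the Hamiltonian $H$-space of Proposition \ref{prop:orbitH}. That lemma says a point of $\psi^{-1}(0)$ has the form $\bigl((q,x_{\Psi(y)}+\lambda),y\bigr)$ with $\lambda\in\lies$. So the first step is to compute the $\kah$-dual $x_{\Psi(\nu)}\in\lieh$ of $\Psi(\nu)$ for $\nu\in\orb$.

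By \eqref{eq:Psiorb}, $\kah(x_{\Psi(\nu)},\eta)=-\ka_e(\nu,\eta)=-\ka_e(P_\lieh\nu,\eta)$ for every $\eta\in\lieh$. Non-degeneracy of $\kah$ (Lemma \ref{lem:H}) then gives $x_{\Psi(\nu)}=-P_\lieh\nu$. It follows that
\[
x=-P_\lieh\nu+\lambda=-\nu+\bigl(\lambda+(\nu-P_\lieh\nu)\bigr),\qquad \nu-P_\lieh\nu\in\lies .
\]
As $\lambda$ ranges over $\lies$, so does $\lambda':=\lambda+(\nu-P_\lieh\nu)$, and the map $\lambda\mapsto\lambda'$ is an affine bijection of $\lies$ for each fixed $\nu$. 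Relabelling $\lambda'$ as $\lambda$ gives exactly the description $\bigl((q,-\nu+\lambda),\nu\bigr)$ with $q\in G$, $\nu\in\orb$, $\lambda\in\lies$.

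One can also verify this directly, which I would include as a check. The defining condition of the level set is $\ka_e(x,\eta)=\pair{\Psi(\nu)}{\eta}=-\ka_e(\nu,\eta)$ for all $\eta\in\lieh$. This says $x+\nu\in\lieh^{\perp_\ka}=\lies$, which is the claim at once.

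For the generator, I apply \eqref{eq:gen} at $q=e$ with $x=-\nu+\lambda$. The $T^*G$-component is $(-\xi,-\ad_x\xi)$. The $Y$-component is the fundamental vector field of $\xi$ on $\orb$ for the action $h\cdot\nu=\Ad_h\nu$, namely $\frac{d}{dt}\big|_{t=0}\Ad_{\exp t\xi}\nu=[\xi,\nu]$. This matches the convention already used in the proof of Proposition \ref{prop:orbitH}.

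There is no real obstacle here. The only point needing care is the reparametrization of $\lambda$: the coordinate $\lambda$ in this statement is not the same as the $\lambda$ of Lemma \ref{lem:level}. It differs by the $\nu$-dependent shift $\nu-P_\lieh\nu$, and one should check that this shift lies in $\lies$ by the splitting $\lieg=\lieh\oplus\lies$.
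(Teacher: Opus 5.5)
Your proposal is correct and matches the paper's proof. The paper likewise computes the $\kah$-dual $x_{\Psi(\nu)}=-P_\lieh\nu$, notes that the fibre $-P_\lieh\nu+\lies$ equals $-\nu+\lies$, and reads off the generator from \eqref{eq:gen} with $\xi_Y(\nu)=[\xi,\nu]$; your direct check that the level condition is $x+\nu\in\lies$, and your remark on the $\nu$-dependent relabelling of $\lambda$, are correct details that the paper leaves implicit.
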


\begin{proof}
	By Lemma \ref{lem:H} the $\kah$-dual of $\Psi(\nu)$ is $x_{\Psi(\nu)}=-P_\lieh\nu$, so the fibre over $\nu$ in Lemma \ref{lem:level} is $-P_\lieh\nu+\lies=-\nu+\lies$. %Isometry of the two actions is Lemma \ref{lem:invariance} on the first factor and $G$-invariance of $(J,\gamma)$ (Theorem \ref{thm:orbit}) on the second; equivariance $\Ad_h(-\nu+\lambda)=-\Ad_h\nu+\Ad_h\lambda$ and $\Ad_h\lies=\lies$ give the stated action on $(\nu,\lambda)$. 
	The generator is \eqref{eq:gen} with $\xi_Y(\nu)=[\xi,\nu]$.
\end{proof}

\begin{theorem}\label{thm:gramorb}
	At $z=\bigl((e,-\nu+\lambda),\nu\bigr)$ and for $\xi,\eta\in\lieh$,
	\begin{equation}\label{eq:Borb}
		B_z(\xi,\eta)
		=\kappa_e\bigl(\fsym(\ad_{\nu-\lambda})\,\xi,\ \eta\bigr)
		\;-\;\kappa_e\bigl(J_\nu\,\ad_\nu\,\xi,\ \eta\bigr),
	\end{equation}
	where $J_\nu\,\ad_\nu\,\xi$ is defined because $\ad_\nu\xi\in T_\nu\orb=\im\ad_\nu$, on which $J_\nu$ acts.
\end{theorem}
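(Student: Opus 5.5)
The plan is to start from the general Gram formula \eqref{eq:B} of Theorem \ref{thm:gram}, specialized through Lemma \ref{lem:levelorb} and Proposition \ref{prop:orbitH}, and to treat its two summands separately. At the level point $z=\bigl((e,x),\nu\bigr)$ we have $x=-\nu+\lambda$. Theorem \ref{thm:gram} applies because Proposition \ref{prop:orbitH} guarantees Assumption \ref{ass:Y} for $Y=\orb$ with $J_Y=J$ and $g_Y=\gamma$. It gives
\[
B_z(\xi,\eta)=\kappa_e\bigl(\fsym(\ad_x)\xi,\eta\bigr)+\gamma\bigl([\xi,\nu],[\eta,\nu]\bigr),
\]
where we use that the generator on the orbit is $\xi_{\orb}(\nu)=[\xi,\nu]$.

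For the first summand we use $\ad_x=\ad_{-\nu+\lambda}=-\ad_{\nu-\lambda}$ together with the parity of $\fsym(z)=z\cot z$. Since $\fsym$ is even, the holomorphic calculus of Lemma \ref{lem:calc}(1) gives $\fsym(-T)=\fsym(T)$. The calculus is applicable here because, by Lemma \ref{lem:loeb}, the Loeb hypothesis makes $\fsym(\ad_x)$ defined for every $x$, and $\Spec(-T)=-\Spec(T)$ also avoids $\pi(\Z\setminus\{0\})$. Hence $\fsym(\ad_x)=\fsym(\ad_{\nu-\lambda})$, which is exactly the first term of \eqref{eq:Borb}.

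For the second summand we unwind $\gamma=\omega(J\cdot,\cdot)$ with the KKS form \eqref{eq:kks}. The key identity, which is a direct consequence of Lemma \ref{lem:orbitalg}(1) and the invariance \eqref{eq:inv}, is
\[
\omega_\nu\bigl(w,[\eta,\nu]\bigr)=\kappa_e(w,\eta)\qquad\text{for all } w\in T_\nu\orb,\ \eta\in\lieg .
\]
To prove it, write $w=[\zeta,\nu]$. Then
\[
\omega_\nu\bigl(w,[\eta,\nu]\bigr)=\Omega_\nu(\zeta,\eta)=\kappa_e(\zeta,\ad_\nu\eta)=-\kappa_e(\ad_\nu\zeta,\eta)=\kappa_e\bigl([\zeta,\nu],\eta\bigr)=\kappa_e(w,\eta).
\]
We now apply this with $w=J_\nu[\xi,\nu]=-J_\nu\ad_\nu\xi$. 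This vector lies in $T_\nu\orb$, because $J_\nu$ preserves $T_\nu\orb=\im\ad_\nu$ by Theorem \ref{thm:orbit}. The identity then yields
\[
\gamma\bigl([\xi,\nu],[\eta,\nu]\bigr)=-\kappa_e\bigl(J_\nu\ad_\nu\xi,\eta\bigr),
\]
which is the second term of \eqref{eq:Borb}. As an alternative route, one could use the momentum-map expression \eqref{eq:Gcal} together with $\pair{d\Psi(v)}{\eta}=-\kappa_e(v,\eta)$ from \eqref{eq:Psiorb}; this gives the same result immediately.

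The only delicate points are sign bookkeeping and well-definedness. For the signs, one must track the convention $[\xi,\nu]=-\ad_\nu\xi$ and the sign in \eqref{eq:Psiorb}. For well-definedness, the identity above depends only on $w$ and not on the choice of $\zeta$, which is Lemma \ref{lem:orbitalg}(3). I expect the sign check in the second summand to be the main thing to get right, and I would verify it against both routes, via $\gamma$ and via \eqref{eq:Gcal}.
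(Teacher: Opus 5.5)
Your proposal is correct and follows essentially the same route as the paper: the first summand comes from Theorem \ref{thm:gram} together with the evenness of $\fsym$. For the second, your identity $\omega_\nu(w,[\eta,\nu])=\kappa_e(w,\eta)$, applied to $w=[\zeta,\nu]=J_\nu[\xi,\nu]$, is the same chain of equalities as in the paper, with the paper's auxiliary $\xi'$ playing the role of your $\zeta$; your alternative via \eqref{eq:Gcal} and \eqref{eq:Psiorb} is also valid and yields the same sign.
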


\begin{proof}
	The first summand is Theorem \ref{thm:gram}, together with the evenness of $\fsym$: $\fsym(\ad_{-\nu+\lambda})=\fsym(-\ad_{\nu-\lambda})=\fsym(\ad_{\nu-\lambda})$. For the second: given $\xi$, pick $\xi'\in\lieg$ with $\ad_\nu\xi'=J_\nu\ad_\nu\xi$ --- possible since $J_\nu\ad_\nu\xi\in\im\ad_\nu$ --- so that $J_\nu[\xi,\nu]=J_\nu(-\ad_\nu\xi)=-\ad_\nu\xi'=[\xi',\nu]$. Then, by \eqref{eq:kks}, Lemma \ref{lem:orbitalg}(1) and $\kappa_e$-antiadjointness of $\ad_\nu$,
	\begin{align*}
	\Gform_\nu(\xi,\eta)&=\gamma\bigl([\xi,\nu],[\eta,\nu]\bigr)
	=\omega\bigl([\xi',\nu],[\eta,\nu]\bigr)
	=-\kappa_e(\nu,[\xi',\eta])
	\\&=\kappa_e(\xi',\ad_\nu\eta)
	=-\kappa_e(\ad_\nu\xi',\eta)
	=-\kappa_e(J_\nu\ad_\nu\xi,\eta). 
	\end{align*}
\end{proof}

%Note the structural feature already visible in Theorem \ref{thm:gram}: for $\lieh\subsetneq\lieg$ the first summand of \eqref{eq:Borb} is a compression to $\lieh$, and the two summands involve $\ad_{\nu-\lambda}$ and $\ad_\nu$ respectively; they combine into a single operator exactly at the \emph{aligned} points $\lambda=0$. There, the following elementary identity becomes decisive.

{
	\begin{remark}\label{rem:alignedexp}
		Assume $G_\mu\subseteq H$. By Lemma \ref{lem:levelorb} the free parameter of
		the level set is $\lambda\in\lies$, and the two summands of \eqref{eq:Borb}
		involve $\ad_{\nu-\lambda}$ and $\ad_\nu$ respectively: for $\lambda=0$ they
		combine into a single operator. Moreover, for $\nu\in H(\mu)$ one has
		$\nu\in\lieh$, so $\ad_\nu$ preserves $\lieh$ and it maps $\lieh$
		into $W:=\lieh\cap T_\nu\orb$, which is $\ad_\nu$-invariant, hence
		$J_\nu$-invariant, $J_\nu=f(A'_\nu)$ being a polynomial in $A'_\nu$. Since
		$J_\nu$ commutes with $\ad_\nu$ on $W$ and $J_\nu^2=-\Id_W$, on $\lieh$
		\[
		\fsym(\ad_\nu)-J_\nu\!\circ\!\ad_\nu
		=\esym(\ad_\nu)^{-1}\bigl(\cos\ad_\nu-J_\nu\sin\ad_\nu\bigr),
		\]
		a product of invertible operators, the second having inverse
		$$\cos\ad_\nu+J_\nu\sin\ad_\nu\,.$$ Hence $B_z$ is non-degenerate at every level
		point with $\lambda=0$ and $\nu\in H(\mu)$. 
		%and, by continuity of	$\det\mathfrak B_z$, on an open neighbourhood of them; its global validity remains, in general, a condition on the family $(\nu,\lambda)\in(\orb\times\lies)/H$, to be checked through Proposition		\ref{prop:TFAE}.
	\end{remark}
}

\subsection{The oscillator group revisited}

We conclude by carrying out the whole program on the example of Section \ref{sec:oscillatorexample}, whose notation we retain: $\lieg=\operatorname{span}\{H,P,Q,E\}$ with $[H,P]=Q$, $[H,Q]=-P$, $[P,Q]=E$, $E$ central, and $\kappa_e$ determined by $\{P,Q\}$ orthonormal, $\kappa_e(E,H)=1$, $\kappa_e(H,H)=\kappa_e(E,E)=0$ (signature $(3,1)$). This group is of Loeb type: for $x=aH+bP+cQ+dE$ one computes $\Spec(\ad_x)=\{0,0,\pm ia\}\subseteq i\R$.

\begin{example}\label{ex:oscillator}
	Let $\mu:=H+E\in\lieg$, the $\kappa_e$-dual of the covector $H'+E'$ of Section \ref{sec:oscillatorexample}, so that $\orb$ is the paraboloid orbit
	\[
	\orb=\Bigl\{\nu=H+h_PP+h_QQ+\bigl(1+\tfrac{h_P^2+h_Q^2}{2}\bigr)E:\ (h_P,h_Q)\in\R^2\Bigr\},
	\]
	with $\ad_\mu=\ad_H$ semisimple of spectrum $\{0,0,\pm i\}$: \textup{(K1)} and \textup{(K2)} hold, $\liegmu=\operatorname{span}\{H,E\}$, $\liegmu^{\perp_{\ka}}=\operatorname{span}\{P,Q\}$, and $\kappa_e|_{\liegmu^{\perp_{\ka}}}>0$, so by Theorem \ref{thm:orbit} the canonical structure $(J,\gamma)$ is \emph{K\"ahler}: it is the flat K\"ahler structure of the harmonic oscillator found in Section \ref{sec:oscillatorexample}. Explicitly, a direct computation with $J_\nu=f(A'_\nu)=A'_\nu$ \textup(here $-A'^2_\nu=\Id$\textup) gives
	\[
	\Gform_\nu(H,H)=-\kappa_e(J_\nu\ad_\nu H,H)=h_P^2+h_Q^2\geq0,
	\qquad \Gform_\nu(E,\cdot)=0 ,
	\]
	consistently with $[E,\cdot]=0$.
	
	As intermediate subgroup take $H:=G_\mu=\exp\bigl(\operatorname{span}\{H,E\}\bigr)$, abelian and closed --- the \emph{only} proper choice with $G_\mu\subseteq H\subsetneq G$, since no $3$-dimensional subalgebra of $\lieg$ contains $\liegmu$: for $0\neq v\in\operatorname{span}\{P,Q\}$ the bracket $[H,v]$ is a nonzero vector of $\operatorname{span}\{P,Q\}$ orthogonal to $v$, hence outside $\liegmu\oplus\R v$. Then
	\[
	\kah=\begin{pmatrix}0&1\\1&0\end{pmatrix}\ \text{ on }(H,E),
	\qquad
	\lieh\cap\lieh^{\perp_{\ka}}=\{0\}:
	\]
	hypothesis \eqref{eq:H} holds, with $\kah$ a \emph{hyperbolic} plane. The momentum map \eqref{eq:Psiorb} is, up to sign, the harmonic-oscillator pair: $\pair{\Psi(\nu)}{H}=-\bigl(1+\tfrac{h_P^2+h_Q^2}{2}\bigr)$, $\pair{\Psi(\nu)}{E}=-1$.
	
	At the level point $z=\bigl((e,-\nu+\lambda),\nu\bigr)$, $\lambda=\lambda_1P+\lambda_2Q\in\lies=\operatorname{span}\{P,Q\}$, the Gram form on $\lieh$ in the basis $(H,E)$ is computed from \eqref{eq:Borb}: writing $x=-\nu+\lambda$ \textup(with $H$-coefficient $-1$\textup), one finds from $\ad_x^3H=-\ad_xH$ that
	\[
	\kappa_e\bigl(\fsym(\ad_x)H,H\bigr)=(\coth 1-1)\bigl[(h_P-\lambda_1)^2+(h_Q-\lambda_2)^2\bigr],
	\]
	\[
	\kappa_e\bigl(\fsym(\ad_x)E,\eta\bigr)=\kappa_e(E,\eta),
	\]
	the latter because $\ad_xE=0$, hence $\fsym(\ad_x)E=E$. Therefore
	\begin{equation}\label{eq:oscB}
		B_z=\begin{pmatrix}
			\beta_z & 1\\[1mm] 1 & 0
		\end{pmatrix},
		\qquad
		\beta_z=(\coth 1-1)\bigl[(h_P-\lambda_1)^2+(h_Q-\lambda_2)^2\bigr]
		+\bigl(h_P^2+h_Q^2\bigr)\ \geq0,
	\end{equation}
	and $\det B_z=-1$ for \emph{every} $z$: condition \eqref{eq:star} holds \emph{identically}. By Theorem \ref{thm:suff},
	\[
	N/\!\!/G_\mu=\Ind_{G_\mu}^{G}\orb
	\]
	is a six-dimensional pseudo-K\"ahler manifold. Its signature is $(6,0)$: indeed $\gad$ has the signature $(6,2)$ of $\kappa_e\oplus\kappa_e$, so $g_N$ has signature $(8,2)$; the Gram form \eqref{eq:oscB} has signature $(1,1)$ everywhere, so $g_N$ restricted to $V\oplus J_NV$ has signature $(2,2)$ \textup(Proposition \ref{prop:TFAE}(v)\textup), and the reduced metric --- the restriction of $g_N$ to the horizontal $(V\oplus J_NV)^{\perp_{g_N}}$ --- has signature $(8,2)-(2,2)=(6,0)$: the reduction is Riemannian.
\end{example}

\begin{comment}
\begin{remark}[the mechanism]\label{rem:mechanism}
	The unconditional validity of \eqref{eq:star} in Example \ref{ex:oscillator} rests on a structural fact worth isolating: the central element $E\in\lieh$ satisfies $\ad_xE=0$ for every $x$ \emph{and} $E_{\orb}=0$ on every coadjoint orbit, so its row of the Gram form is the constant $\kappa_e(E,\cdot)|_\lieh$, independently of the point; the hyperbolic pairing $\kappa_e(E,H)=1$ then forces $\det B_z=-1$ regardless of the entry $\beta_z$. This is a mechanism by which \eqref{eq:star} can hold with $\kah$ \emph{indefinite}, complementary to the definite criterion of Proposition \ref{prop:definite} and in contrast with the failure of Example \ref{ex:counter}, where $\lieh$ was one-dimensional and definite. Consistently with Proposition \ref{prop:aligned}: here $H(\mu)=\{\mu\}$ is trivially $J$-complex, and at the aligned point over $\mu$ one has $\ad_\mu|_\lieh=0$ and $B=\kah$ --- the hyperbolic plane --- which \eqref{eq:oscB} deforms without ever degenerating.
\end{remark}
\end{comment}

{
\subsection{Self-induction reproduces the canonical structure}\label{subsec:reproduce}

The canonical pair $(J,\gamma)$ of Theorem \ref{thm:orbit} was constructed algebraically, through the functional calculus of $A'_\nu$. The induction framework offers a second, global construction: take $H=G$ and take as fibre the orbit itself, $Y=\orb$, so that $\Ind_G^G\orb=(T^*G\times\orb)/\!\!/G$. The next theorem shows that the two constructions agree.

\begin{theorem}\label{thm:reproduce}
	Let $(G,\kappa_e)$ be of Loeb type and let $\orb$ satisfy \textup{(K1)}--\textup{(K2)}, endowed with $(\omega,J,\gamma)$ of Theorem \ref{thm:orbit} and with the momentum map \eqref{eq:Psiorb}. Take $H=G$ --- so that $\lies=0$, and \eqref{eq:H} is the standing non-degeneracy of $\kappa_e$ --- and $Y=\orb$. Then:
	\begin{itemize}
		\item[\textup{(1)}] the level set is $\psi^{-1}(0)=\bigl\{((q,-\nu),\nu):q\in G,\ \nu\in\orb\bigr\}$; every level point has $\lambda=0$, and condition \eqref{eq:star} holds automatically;
		\item[\textup{(2)}] the map $((q,-\nu),\nu)\mapsto\Ad_q\nu$ descends to a diffeomorphism
		\[
		\tilde\pi:\ \Ind_G^G\orb=(T^*G\times\orb)/\!\!/G\ \xrightarrow{\ \sim\ }\ \orb ,
		\]
		which is an isomorphism of pseudo-K\"ahler manifolds:
		\[
		\tilde\pi_{*}\,\omega_{\mathrm{red}}=\omega,\qquad
		\tilde\pi_{*}\,\gred=\gamma,\qquad
		d\tilde\pi\circ J_{\mathrm{red}}=J\circ d\tilde\pi .
		\]
	\end{itemize}
\end{theorem}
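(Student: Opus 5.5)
For (1), I would specialize Lemma \ref{lem:levelorb} to $H=G$. Then $\lies=\lieg^{\perp_\ka}=0$, so the level set is $\{((q,-\nu),\nu)\}$, and every point has $\lambda=0$.

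To get \eqref{eq:star}, I would use left $G$-invariance of $g_N$ (Lemma \ref{lem:invariance}), which commutes with the right $H$-action, to reduce to $q=e$. At $z=((e,-\nu),\nu)$, Theorem \ref{thm:gramorb} gives
\[
\mathfrak B_z=\fsym(\ad_\nu)-J_\nu\circ\ad_\nu
\]
on all of $\lieg$. Remark \ref{rem:alignedexp} applies verbatim, since $\nu\in\lieg=\lieh$ trivially and $H(\mu)=\orb$. It yields the factorization
\[
\mathfrak B_z=\esym(\ad_\nu)^{-1}\bigl(\cos\ad_\nu-J_\nu\sin\ad_\nu\bigr),
\]
where $J_\nu\sin\ad_\nu$ makes sense because $\sin\ad_\nu$ maps into $\im\ad_\nu$. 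Its inverse is $\cos\ad_\nu+J_\nu\sin\ad_\nu$, invertibility of $\esym(\ad_\nu)$ coming from Lemma \ref{lem:loeb}. By Proposition \ref{prop:TFAE} this gives \eqref{eq:star}, and Theorem \ref{thm:suff} then provides the reduced structure.

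For (2), I would first show that $\Phi((q,-\nu),\nu):=\Ad_q\nu$ is $G$-invariant. Under $h$ the point $(q,x,\nu)$ goes to $(qh^{-1},\Ad_hx,\Ad_h\nu)$, and $\Ad_{qh^{-1}}\Ad_h\nu=\Ad_q\nu$. The map $\Phi$ is a surjective submersion, and its fibres are single $G$-orbits: $\Ad_q\nu=\Ad_{q'}\nu'$ forces $\nu'=\Ad_h\nu$ with $h=q'^{-1}q$, hence $q'=qh^{-1}$. Freeness then shows that $\tilde\pi$ is a bijective submersion between manifolds of equal dimension, hence a diffeomorphism.

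A cleaner way to see the symplectic part is to note that $\Phi$ is the restriction of the $G$-momentum map $\phi$ of \eqref{eq:RZmom} up to sign conventions. Alternatively, pull $\omega$ back via the slice $\nu\mapsto((e,-\nu),\nu)$ and check $\tilde\pi_*\omega_{\mathrm{red}}=\omega$ directly. The slice is transversal to the $G$-orbits and $\tilde\pi$ restricts to the identity on it. Take tangent vectors $((0,-v),v)$ with $v=[\zeta,\nu]$. By \eqref{eq:omega} the $T^*G$ part contributes $0$, since the $\xi$-components vanish, and the $Y$ part contributes $\omega_Y(v,v')$. Hence the slice pullback of $\omega_N$ is $\omega$, and since $\omega_{\mathrm{red}}$ is the descent of $\omega_N|_{\psi^{-1}(0)}$, the equality $\tilde\pi_*\omega_{\mathrm{red}}=\omega$ follows.

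Equivariance is immediate. The left $G$-action on $T^*G$ commutes with the right $H$-action and covers $\Ad$ on $\orb$ via $\tilde\pi$. Thus $\tilde\pi_*J_{\mathrm{red}}$ and $\tilde\pi_*\gred$ are $G$-invariant, and it suffices to compare them with $J_\mu$ and $\gamma_\mu$ at one point.

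This pointwise comparison is the main obstacle. The reduced complex structure is realized on the horizontal space $\mathcal H_z=(V+J_NV)^{\perp_{g_N}}$, which is not the slice. My plan is to characterize $J_{\mathrm{red}}$ through its $(+i)$-eigenspace and use the uniqueness in Theorem \ref{thm:orbit}(4). Equivalently, I would show $d\tilde\pi(J_N u)=J_\nu\, d\tilde\pi(u)$ for $u\in\mathcal H_z$. First, I would compute $d\tilde\pi$ at $z$ on a vector $((\xi,\eta),w)$: it is $[\xi,\nu]+w$. Next, I would solve for the horizontal lift $u$ of a given $w\in T_\nu\orb$, taking $u=((\xi,\eta),w')$ subject to $u\in T\psi^{-1}(0)$ and $g_N(u,\zeta_N)=g_N(u,J_N\zeta_N)=0$ for all $\zeta\in\lieg$. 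Using \eqref{eq:J}, \eqref{eq:g} and functional calculus in the single operator $A=\ad_\nu$, this becomes a linear system whose coefficients are functions of $A$ and $J_\nu$. I expect it to be solvable by working on the generalized eigenspaces $E_\pm$, where $J_\nu=\pm i$ and everything reduces to scalar identities in $z$ such as $\cos z\mp i\sin z=e^{\mp iz}$. The same decomposition should verify $J_N u\mapsto J_\nu w$.

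Once $J$ matches, the metric follows from the symplectic form: $\gred=\omega_{\mathrm{red}}(J_{\mathrm{red}}\cdot,\cdot)$ and $\gamma=\omega(J\cdot,\cdot)$, so $\tilde\pi_*\gred=\gamma$.

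As a fallback for the $J$-identification, I would argue holomorphically. I would show that $\psi^{-1}(0)\to\orb$ is the restriction of a map that is holomorphic on a $G_\C$-saturated neighbourhood, obtained by extending $(P(q,x),\nu)\mapsto\Ad_{P}\nu$ via the biholomorphism of Theorem \ref{thm:Szoke}. I would then identify its fibres with complexified orbits, so that it is the Kähler-quotient projection.
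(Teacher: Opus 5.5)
Your part (1), the proof that $\tilde\pi$ is a diffeomorphism, and the check $\tilde\pi_*\omega_{\mathrm{red}}=\omega$ via slice lifts all agree with the paper. The gap is the step you flag yourself. The identification of $J_{\mathrm{red}}$, and hence of $\gred$, with $(J_\nu,\gamma_\nu)$ is only announced: no horizontal lift is written down and no identity is checked. That is the whole non-formal content of (2).

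The paper closes this step metric-first. By \eqref{eq:g} and the momentum property of \eqref{eq:Psiorb}, $g_N(s(w),\xi_N)=\kappa_e\bigl((J_\nu+\tsym(\ad_\nu))w,\xi\bigr)$. Hence $h(w)=s(w)-\zeta(w)_N$, with $\zeta(w)=\mathfrak B^{-1}\bigl(J_\nu+\tsym(\ad_\nu)\bigr)w$ and $\mathfrak B=\fsym(\ad_\nu)-J_\nu\ad_\nu$, is the horizontal lift of $w$. Only $V$-orthogonality has to be imposed, since on a genuine zero level $T_z\psi^{-1}(0)=(J_NV)^{\perp_{g_N}}$. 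Then $g_N(h(w),h(w'))=\kappa_e(\vsym(\ad_\nu)w,w')+\gamma(w,w')-B_z(\zeta(w),\zeta(w'))$, which reduces to $\gamma(w,w')$ by $\vsym\fsym=1-\tsym^2$. Finally $J_{\mathrm{red}}$ is read off from $\omega_{\mathrm{red}}=\gred(\cdot,J_{\mathrm{red}}\cdot)$, so $J_N$ is never applied to horizontal vectors. A $J$-first computation needs the same identity, in the form $2\tsym(z)\fsym(z)=z\bigl(1-\tsym(z)^2\bigr)$. No splitting into $E_\pm$ is needed, since every operator involved is a function of $\ad_\nu|_{T_\nu\orb}$ and $J_\nu$, which commute.

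Moreover, the linear system you propose has no nonzero solution as you set it up. On the level set of part (1), $\{x=-\nu\}$, a tangent vector is $u=((\xi,-w'),w')$. By \eqref{eq:omega}, \eqref{eq:gen} and \eqref{eq:kks}, $g_N(u,J_N\zeta_N)=\omega_N(u,\zeta_N)=2\kappa_e(w',\zeta)$. So $g_N(u,J_N\zeta_N)=0$ forces $w'=0$, after which $g_N(u,\zeta_N)=-\kappa_e\bigl(\esym(\ad_\nu)^{-1}\xi,\zeta\bigr)=0$ forces $\xi=0$. The only solution is $u=0$.

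The reason is a sign inconsistency in the paper's formulas. They give $\omega_N(\xi_N,\eta_N)=\kappa_e(x-\nu,[\xi,\eta])$, so the $G$-orbits are isotropic only where $x-\nu$ is central. The momentum map compatible with \eqref{eq:omega} and Assumption \ref{ass:Y} is $\Psi+q^{-1}p|_{\lieh}$, not $\Psi-q^{-1}p|_{\lieh}$, and its zero level for $H=G$ is $\{x=+\nu\}$.

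The paper's metric-first argument does not see this sign. The map $(q,x,y)\mapsto(q,-x,y)$ is a $G$-equivariant $g_N$-isometry of $N$ exchanging the two sets; it is anti-holomorphic on $T^*G$. So Gram forms, orthogonal projections and $\tilde\pi_*\gred$ come out the same on either set, but any computation that uses $J_N$ directly does not.

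Redone on $\{x=\nu\}$, with $s(w)=((0,w),w)$ and the same $\zeta(w)$, your plan does close:
$d\tilde\pi(J_Nh(w))-J_\nu w=\mathfrak B^{-1}\bigl[2\tsym\fsym-\ad_\nu(\Id-\tsym^2)\bigr]w=0$, with the symbols evaluated at $\ad_\nu$.
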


\begin{proof}
	For $H=G$ one has $\lies=\lieg^{\perp_{\ka}}=0$, so Lemma \ref{lem:levelorb} gives the stated level set, with $\lambda=0$ everywhere. {Remark \ref{rem:alignedexp} applies with $H=G$: $B_z$ is non-degenerate at every level point,} \eqref{eq:star} holds, and Theorem \ref{thm:suff} applies; the $G$-action on the level is free and proper, being so on the $T^*G$-factor, where it is right translation.

	On the level set, $$h\cdot\bigl((q,-\nu),\nu\bigr)=\bigl((qh^{-1},-\Ad_h\nu),\Ad_h\nu\bigr)$$ and $\Ad_{qh^{-1}}\Ad_h\nu=\Ad_q\nu$. The map descends, onto $\orb$, and it is injective on the quotient. %, since $\Ad_q\nu=\Ad_{q'}\nu'$ means that $h:=q^{-1}q'$ carries one point to the other. 
	As $\dim\Ind_G^G\orb=2\dim(G/G)+\dim\orb=\dim\orb$ and $d\tilde\pi$ is onto (see below), $\tilde\pi$ is a diffeomorphism.

	Every $G$-orbit in the level set meets the slice $\{q=e\}$, so all the structures may be computed at $z=\bigl((e,-\nu),\nu\bigr)$, where $x=-\nu$ and $\ad_x=-\ad_\nu$. Level-tangent vectors at $z$ are the $u=\bigl((\xi,-w),w\bigr)$ with $\xi\in\lieg$, $w\in T_\nu\orb$; the generator of $\zeta\in\lieg$ is $\zeta_N(z)=\bigl((-\zeta,\ad_\nu\zeta),[\zeta,\nu]\bigr)$ (Lemma \ref{lem:levelorb}); and
	\[
	d\tilde\pi\bigl((\xi,-w),w\bigr)=[\xi,\nu]+w ,
	\]
	which is onto $T_\nu\orb$. In particular the \emph{slice lift} $s(w):=\bigl((0,-w),w\bigr)$ satisfies $d\tilde\pi\,s(w)=w$.

	By Theorem \ref{thm:B6}, $\omega_{\mathrm{red}}$ evaluates on the images of \emph{any} level-tangent vectors as $\omega_N$ does on the vectors themselves; through $\tilde\pi$ and the slice lifts,
	\begin{align*}
	(\tilde\pi_{*}\omega_{\mathrm{red}})(w,w')
	&=\omega_N\bigl(s(w),s(w')\bigr) \\
	&=\omega\bigl((0,-w),(0,-w')\bigr)+\omega(w,w')\\
	&=\omega(w,w') ,
	\end{align*}
	the $T^*G$-term vanishing because every summand of \eqref{eq:omega} contains a $\xi$-entry.

	Now, we aim to prove $\tilde\pi_{*}\gred=\gamma$. Then, write $D:=\ad_\nu$ and let $$\mathfrak B:=\fsym(D)-J_\nu\circ D$$ be the operator of {Remark \ref{rem:alignedexp}}, so that $B_z(\zeta,\xi)=\kappa_e(\mathfrak B\zeta,\xi)$: it is invertible, and it preserves the $\kappa_e$-orthogonal splitting $\lieg=\lieg_\nu\oplus T_\nu\orb$ of \textup{(K1)}, being the identity on $\lieg_\nu$. On $T_\nu\orb$ the operators $D$, $J:=J_\nu$ and all the symbols $F(D)$ commute ($J=f(A'_\nu)$ is itself a function of $D|_{T_\nu\orb}$) with $\kappa_e$-adjoints $F(D)^{\ast}=F(-D)$ by \eqref{eq:symbolrules} and $J^{\ast}=-J$, the function $f$ being odd; in particular $\mathfrak B^{\ast}=\mathfrak B$.

	The horizontal component of the slice lift is $h(w)=s(w)-\zeta(w)_N$, where $\zeta(w)\in\lieg$ is determined by $g_N\bigl(h(w),\xi_N\bigr)=0$ for all $\xi\in\lieg$, i.e.\ by
	\[
	\kappa_e\bigl(\mathfrak B\,\zeta(w),\xi\bigr)=B_z\bigl(\zeta(w),\xi\bigr)=g_N\bigl(s(w),\xi_N\bigr) .
	\]
	We compute the right-hand side. By \eqref{eq:g}, with $\tsym(\ad_x)=-\tsym(D)$ and $\vsym(\ad_x)=\vsym(D)$,
	\[
	\gad\bigl((0,-w),(-\xi,\ad_\nu\xi)\bigr)
	=-\kappa_e\bigl(\tsym(D)w,\xi\bigr)+\kappa_e\bigl(D\,\vsym(D)w,\xi\bigr)
	=\kappa_e\bigl(\tsym(D)w,\xi\bigr) ,
	\]
	using $\kappa_e$-antiadjointness of $D$ and the identity $z\vsym(z)=2\tsym(z)$. For the fibre part, the momentum property of \eqref{eq:Psiorb} reads $$\omega\bigl([\xi,\nu],v\bigr)=\pair{d\Psi_\nu(v)}{\xi}=-\kappa_e(v,\xi),$$ i.e.\ $\omega\bigl(v,[\xi,\nu]\bigr)=\kappa_e(v,\xi)$ for every $v\in T_\nu\orb$; hence
	\[
	\gamma\bigl(w,[\xi,\nu]\bigr)=\omega\bigl(Jw,[\xi,\nu]\bigr)=\kappa_e(Jw,\xi) .
	\]
	Altogether $\kappa_e\bigl(\mathfrak B\zeta(w),\xi\bigr)=\kappa_e\bigl((J+\tsym(D))w,\xi\bigr)$ for all $\xi$, whence
	\[
	\zeta(w)=\mathfrak B^{-1}\bigl(J+\tsym(D)\bigr)w\ \in\ T_\nu\orb .
	\]
	Both $s(w)$ and $\zeta(w)_N$ are tangent to the level, so $h(w)\in T_z\psi^{-1}(0)\cap V^{\perp_{g_N}}=\mathcal H_z$ (Proposition \ref{prop:TFAE}), and $d\tilde\pi\,h(w)=w$. Since $\gred$ is the restriction of $g_N$ to $\mathcal H$ (Theorem \ref{thm:B6}), and since $$g_N\bigl(s(w),\zeta(w')_N\bigr)=B_z\bigl(\zeta(w),\zeta(w')\bigr)=g_N\bigl(\zeta(w)_N,\zeta(w')_N\bigr),$$
	we obtain
	\[
	(\tilde\pi_{*}\gred)(w,w')
	=g_N\bigl(h(w),h(w')\bigr)
	=g_N\bigl(s(w),s(w')\bigr)-B_z\bigl(\zeta(w),\zeta(w')\bigr) .
	\]
	The first summand is $\kappa_e\bigl(\vsym(D)w,w'\bigr)+\gamma(w,w')$, by \eqref{eq:g} again. For the second, the adjoint rules above give $\bigl(J+\tsym\bigr)^{\ast}=-\bigl(J+\tsym\bigr)$ and $(J+\tsym)^2=-\Id+2\tsym J+\tsym^2$ on $T_\nu\orb$, so
	\[
	B_z\bigl(\zeta(w),\zeta(w')\bigr)
	=\kappa_e\bigl((J+\tsym)w,\ \mathfrak B^{-1}(J+\tsym)w'\bigr)
	=\kappa_e\bigl(\mathfrak B^{-1}\bigl(\Id-\tsym^2-2\tsym J\bigr)w,\ w'\bigr) .
	\]
	Therefore
	\[
	(\tilde\pi_{*}\gred)(w,w')-\gamma(w,w')
	=\kappa_e\Bigl(\mathfrak B^{-1}\bigl[\vsym\mathfrak B-(\Id-\tsym^2)+2\tsym J\bigr]w,\ w'\Bigr) ,
	\]
	and the bracket vanishes: $\vsym\mathfrak B=\vsym\fsym-\vsym D\,J=\vsym\fsym-2\tsym J$, so the bracket equals $\vsym\fsym-(\Id-\tsym^2)$, which is zero by 
	\[
	\vsym(z)\,\fsym(z)=2\tan\tfrac z2\,\cot z=1-\tan^2\tfrac z2=1-\tsym(z)^2 .
	\]
	Hence $\tilde\pi_{*}\gred=\gamma$.

	Finally, the reduced triple of Theorem \ref{thm:B6} is compatible: $$\omega_{\mathrm{red}}=\gred(\cdot,J_{\mathrm{red}}\cdot).$$ Pushing forward, $\omega=\gamma\bigl(\cdot,(\tilde\pi_{*}J_{\mathrm{red}})\cdot\bigr)$; since also $\omega=\gamma(\cdot,J\cdot)$ and $\gamma$ is non-degenerate, $\tilde\pi_{*}J_{\mathrm{red}}=J$.
\end{proof}

\section{Induction in stages}\label{sec:stages}

Eventually, we conclude by proving a pseudo-K\"ahler version of the induction-in-stages property. Throughout this section
\[
K\subseteq H\subseteq G
\]
are closed connected subgroups of the Loeb-type group $(G,\kappa_e)$, and we assume that both restrictions
\begin{equation}\label{eq:HK}
	\kah:=\kappa_e|_{\lieh\times\lieh}
	\qquad\text{and}\qquad
	\kappa_{\liek}:=\kappa_e|_{\liek\times\liek}\quad \text{are non-degenerate,}
\end{equation}
i.e.\ that \eqref{eq:H} holds for both subalgebras. The manifold $(Y,\omega_Y,J_Y,g_Y,\Psi)$ is a pseudo-K\"ahler Hamiltonian $K$-space, i.e.\ it satisfies Assumption \ref{ass:Y} with $K$ in place of the acting group. At the level of Hamiltonian spaces, the induction-in-stages property reads,
\begin{equation}\label{eq:stages}
	\Ind_H^G\,\Ind_K^H\,Y\;=\;\Ind_K^G\,Y ,
\end{equation}
we refer to \cite{RatiuZiegler}. We show that, under condition \eqref{eq:star} imposed at each stage, the composite space $\Ind_H^G\,\Ind_K^H\,Y$ carries a reduced pseudo-K\"ahler structure (Theorem \ref{thm:stages}). We then compare it with the structure produced by the direct induction on the common underlying symplectic manifold, and prove by an explicit example that the two refinements differ in general, see Example \ref{ex:stagesdiff}.

\begin{lemma}\label{lem:heredity}
	$(H,\kah)$ is again a Lie group of Loeb type with bi-invariant pseudo-Riemannian metric. Consequently the whole framework of Section \ref{sec:psN} --- the entire Grauert tube of Theorem \ref{thm:Szoke}, the closed formulas for the adapted pair $(\Jad,\gad)$ of $T^*H$, and the Gram-form theory of Section \ref{sec:polar} --- applies to $H$.
\end{lemma}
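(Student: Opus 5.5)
The proof splits into three parts. First we check that $H$ is of Loeb type, then that $\kah$ is a bi-invariant pseudo-Riemannian metric on $H$, and finally that every ingredient of Section \ref{sec:psN} used only these two facts.

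\emph{Loeb type.} For $\xi\in\lieh$, the subspace $\lieh\subseteq\lieg$ is $\ad_\xi$-invariant, so $\ad^{\lieh}_\xi=\ad_\xi|_{\lieh}$. The characteristic polynomial of $\ad^{\lieh}_\xi$ divides that of $\ad_\xi$: in a basis adapted to $\lieh\subseteq\lieg$ the matrix is block upper triangular. Hence $\Spec(\ad^{\lieh}_\xi)\subseteq\Spec(\ad_\xi)$, and
\[
\Spec(\ad^{\lieh}_\xi)\cap\R\subseteq\{0\}.
\]
Equality holds because $\ad^{\lieh}_\xi\xi=0$; if $\xi=0$, then $\ad^{\lieh}_\xi=0$ trivially. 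Thus $\lieh$ is of Loeb type.

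\emph{The metric.} Non-degeneracy of $\kah$ is \eqref{eq:HK}, equivalently Lemma \ref{lem:H}. The identity \eqref{eq:inv}, restricted to $\xi,\zeta,\eta\in\lieh$, gives $\ad(\lieh)$-invariance of $\kah$. Since $H$ is connected, this integrates to $\Ad(H)$-invariance, as noted at the start of Section \ref{sec:orbits}. Left-translating $\kah$ therefore yields a bi-invariant pseudo-Riemannian metric on $H$.

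\emph{Transfer of the framework.} Theorem \ref{thm:Szoke} and \cite[Theorem 0.5, 0.6]{Szoke} need only a connected Lie group of Loeb type together with an $\Ad$-invariant pseudo-Euclidean product. So $T^*H$ carries an entire Grauert tube and the adapted pseudo-K\"ahler triple. Next, Lemma \ref{lem:loeb}, Lemma \ref{lem:dP}, Proposition \ref{prop:J}, Proposition \ref{prop:blocks} and Lemma \ref{lem:invariance} are derived from two things only. One is the Loeb property, which ensures the symbols $\esym,\fsym,\tsym,\vsym$ evaluated at $\ad^{\lieh}_x$ are defined and invertible. The other is the rules \eqref{eq:symbolrules}, which use only the $\kah$-skewness of $\ad^{\lieh}_x$. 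These formulas therefore hold verbatim with $(\lieg,\ka,\ad_x)$ replaced by $(\lieh,\kah,\ad^{\lieh}_x)$.

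The Gram-form theory (Lemma \ref{lem:level}, Theorem \ref{thm:gram}, Proposition \ref{prop:TFAE}) additionally needs the analogue of Assumption \ref{ass:H} for $K\subseteq H$. This requires $\kappa_{\liek}$ to be non-degenerate, which is exactly \eqref{eq:HK}. Since $\kah|_{\liek}=\kappa_{\liek}$, the orthogonal complement of $\liek$ inside $\lieh$ taken with respect to $\kah$ is a genuine complement, and Lemma \ref{lem:H} applies inside $\lieh$.

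The main obstacle is the complexification. Theorem \ref{thm:Szoke} uses a complexification $\tilde H$ in which the polar map is defined, and its closure properties are not automatic. I would avoid the issue by taking $\tilde H$ abstractly from the Loeb-type Lie group $H$ itself, which \cite[Theorem 7.5]{Szoke} permits. I would not try to embed $\tilde H$ in $\tilde G$. Injectivity and diffeomorphism of the polar map then come from \cite{Szoke} applied to $H$ directly.
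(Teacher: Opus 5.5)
Your proposal is correct and follows the paper's proof. The shared steps are: non-degeneracy of $\kah$ from \eqref{eq:HK}, $\ad(\lieh)$-invariance by restriction, the Loeb property from the divisibility of the characteristic polynomial of $\ad_\xi|_{\lieh}$ (with $0$ attained through $\ad_\xi\xi=0$), and a direct appeal to Theorem~\ref{thm:Szoke} and Section~\ref{sec:polar} for $(H,\kah)$; your remarks on $\Ad(H)$-invariance and on $\kappa_{\liek}$ inside $\lieh$ only make explicit what the paper leaves implicit. Two small points: $\tsym(\ad_x|_{\lieh})$ is defined but never invertible (since $\tsym(0)=0$), though nothing requires it to be; and the complexification is less of an obstacle than you fear, since pulling back the complex structure of the analytic subgroup of $\tilde G$ with Lie algebra $\lieh_\C$ (closed or not) along $P|_{H\times\lieh}$ — injective as a restriction of the polar map of $G$, and everywhere regular by Lemma~\ref{lem:dP} applied to $\lieh$ — already yields $\Jad$ on all of $T^*H$.
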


\begin{proof}
	The form $\kah$ is non-degenerate by hypothesis, and it is $\ad(\lieh)$-invariant, being the restriction to a subalgebra of an $\ad(\lieg)$-invariant form. For the spectral condition, fix $\xi\in\lieh$: the complexification $\lieh_\C\subseteq\lieg_\C$ is invariant under $\ad_\xi$, so the characteristic polynomial of $\ad_\xi|_{\lieh}$ divides that of $\ad_\xi$, whence
	\[
	\Spec\bigl(\ad_\xi|_{\lieh}\bigr)\subseteq\Spec(\ad_\xi)
	\qquad\Longrightarrow\qquad
	\Spec\bigl(\ad_\xi|_{\lieh}\bigr)\cap\R\subseteq\{0\},
	\]
	and $0$ does occur, since $\ad_\xi\xi=0$ with $\xi\in\lieh$. Theorem \ref{thm:Szoke} and Section \ref{sec:polar} then apply to $(H,\kah)$.
\end{proof}

The following consequence of Lemma \ref{lem:heredity} is easy to prove and it is left to the reader.

\begin{proposition}\label{prop:stagefibre}
	Assume condition \eqref{eq:star} for the $K$-action on $T^*H\times Y$. Then
	\[
	Z:=\Ind_K^H\,Y=\bigl(T^*H\times Y\bigr)/\!\!/K
	\]
	is a pseudo-K\"ahler manifold and, endowed with the residual Hamiltonian $H$-action --- induced by the cotangent lift of left translations of $H$ \cite[\S1]{RatiuZiegler} --- and with its equivariant momentum map, it satisfies Assumption \ref{ass:Y} for the group $H$.
\end{proposition}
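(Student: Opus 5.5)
The plan is to apply Theorem \ref{thm:suff} to the group $H$ in place of $G$ and $K$ in place of $H$, and then to check Assumption \ref{ass:Y} for the residual $H$-action by hand, tracking invariance through the reduction. \emph{First step.} By Lemma \ref{lem:heredity}, $(H,\kah)$ is of Loeb type, so Proposition \ref{prop:blocks} and Lemma \ref{lem:invariance} give a pseudo-K\"ahler structure $(d\varpi_H,\Jad^H,\gad^H)$ on $T^*H$ that is invariant under both lifted translations. The hypothesis $\liek\cap\liek^{\perp_{\kah}}=\{0\}$ follows from \eqref{eq:HK}, since $\liek^{\perp_{\kah}}$ is the $\kappa_e$-orthogonal of $\liek$ inside $\lieh$ and non-degeneracy of $\kappa_{\liek}$ is intrinsic (Lemma \ref{lem:H}). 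Hence Lemma \ref{lem:level} applies: $K$ acts freely and properly on the level $\psi_K^{-1}(0)\subset T^*H\times Y$. Together with the assumed \eqref{eq:star}, Theorem \ref{thm:suff} makes $Z$ a smooth pseudo-K\"ahler manifold $(Z,\omega_Z,J_Z,g_Z)$.

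\emph{Second step: the residual action.} On $T^*H\times Y$, let $H$ act by $h'\cdot((q,x),y)=((h'q,x),y)$, the lift of left translation. This action commutes with the right $K$-action. It preserves $\psi_K$, because $\psi_K$ depends only on $x=q^{-1}p$ and $y$, so it preserves the level. It also preserves $\omega_N$, $J_N$ and $g_N$, by Lemma \ref{lem:invariance} on the first factor and triviality on $Y$. The action therefore descends to $Z$, and I will argue that the descended action preserves $\omega_Z$, $J_Z$ and $g_Z$. For this I use the uniqueness clause of Theorem \ref{thm:B6}: the reduced tensors are characterized by their pull-backs to the level, that is, by $g_N$ restricted to the horizontal spaces $\mathcal H_z$ of Proposition \ref{prop:TFAE}(v). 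An $H$-isometry commuting with $K$ and preserving $J_N$ maps $V_z$ to $V_{h'z}$, hence $\mathcal H_z$ to $\mathcal H_{h'z}$. It follows that the pushed-forward tensors again satisfy the defining property, and so they coincide with the originals.

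\emph{Third step: the momentum map.} The momentum map for left translations on $T^*H$ is $\phi_H(p,y)=pq^{-1}$, as in \eqref{eq:RZmom}. It is $K$-invariant, since right translation does not change $pq^{-1}$, and it is $H$-equivariant. It therefore descends to $\Phi_Z:Z\to\lieh^*$. The momentum identity $d\langle\Phi_Z,\eta\rangle=\iota_{\eta_Z}\omega_Z$ follows from the corresponding identity upstairs by pulling back along the projection from the level, using $\pi^*\omega_Z=\omega_N|_{\mathrm{level}}$ and the fact that $\eta_N$ is tangent to the level and projects to $\eta_Z$. Integrability of $J_Z$ and non-degeneracy of $g_Z$ are already part of the conclusion of Theorem \ref{thm:B6}.

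I expect the main obstacle to be the invariance of $J_Z$ in the second step, since Theorem \ref{thm:B6} is only quoted and its uniqueness clause must carry the argument. If that clause proves insufficient, my fallback is to check directly that $d\pi\circ h'_*=h'_*\circ d\pi$ intertwines the horizontal lifts. This uses that $J_N$ commutes with $h'_*$, together with the explicit description $\mathcal H_z=(V+J_NV)^{\perp_{g_N}}$.
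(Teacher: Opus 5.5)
Your proposal is correct and follows essentially the same route as the paper's own argument. That argument applies Theorem \ref{thm:suff} to $H$ via Lemma \ref{lem:heredity}, and then deduces invariance of the reduced structure from the fact that the lifted left translations commute with the right $K$-action, preserve $(\omega_N,J_N,g_N)$, and therefore preserve the horizontal distribution $\mathcal H$ that characterizes $(g_{\mathrm{red}},J_{\mathrm{red}})$. The only difference is that you verify the descended momentum-map identity directly, by pulling back along the projection from the level, whereas the paper simply appeals to the induction construction of \cite{RatiuZiegler}.
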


%\begin{proof}	The pseudo-K\"ahler structure on $Z$ is Theorem \ref{thm:suff}, applied to the group $H$ through Lemma \ref{lem:heredity}. That $Z$, with the residual action and momentum map, is a Hamiltonian $H$-space is the content of the induction construction itself \cite[\S1]{RatiuZiegler}; equivariance is part of that statement. What remains to be checked is that $H$ acts by \emph{pseudo-K\"ahler} automorphisms of the reduced structure. The cotangent lift of left translations preserves $\varpi_H$, $\Jad$ and $\gad$ of $T^*H$ (Lemma \ref{lem:invariance}, applied to $H$), acts trivially on the factor $Y$, and commutes with the right $K$-action; hence it preserves the level set of the $K$-momentum map, permutes the $K$-orbits, and preserves the horizontal distribution, which is defined through $g_{N}$-orthogonality to the orbit directions and to their $J_N$-images. Since the reduced pseudo-K\"ahler structure of Theorem \ref{thm:B6} is characterized by the restriction of $(g_N,J_N)$ to the horizontal distribution, it is preserved as well.\end{proof}

\begin{theorem}[pseudo-K\"ahler induction in stages]\label{thm:stages}
	Assume \eqref{eq:HK}, Assumption \ref{ass:Y} for the $K$-action on $Y$, condition \eqref{eq:star} for the $K$-action on $T^*H\times Y$, and condition \eqref{eq:star} for the $H$-action on $T^*G\times Z$. Then
	\[
	\Ind_H^G\,\Ind_K^H\,Y
	\]
	is a pseudo-K\"ahler manifold, whose underlying symplectic manifold is canonically symplectomorphic to $\Ind_K^G\,Y$ by \eqref{eq:stages}.
\end{theorem}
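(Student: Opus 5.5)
The proof will apply Theorem \ref{thm:suff} twice, using Proposition \ref{prop:stagefibre} as the bridge between the two applications. The canonical symplectomorphism is then taken directly from the symplectic induction-in-stages result of \cite{RatiuZiegler}.

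\emph{First stage.} By Lemma \ref{lem:heredity}, $(H,\kah)$ is of Loeb type with a bi-invariant pseudo-Riemannian metric. Hence $T^*H$ carries the entire adapted pseudo-K\"ahler structure $(d\varpi_H,\Jad,\gad)$ of Proposition \ref{prop:blocks}. The right $K$-action on $T^*H$ preserves this structure by Lemma \ref{lem:invariance}, applied to $H$. Since \eqref{eq:HK} gives \eqref{eq:H} for $\liek\subseteq\lieh$, Lemma \ref{lem:level} applies with $(H,K)$ in place of $(G,H)$, and the $K$-action on the level set is free and proper. With the hypothesis \eqref{eq:star} for $K$ on $T^*H\times Y$, Theorem \ref{thm:suff} applies. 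Proposition \ref{prop:stagefibre} then says that $Z=\Ind_K^H Y$ is a pseudo-K\"ahler Hamiltonian $H$-space satisfying Assumption \ref{ass:Y} for the group $H$.

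\emph{Second stage.} Take the fibre to be $Z$ and the pair to be $H\subseteq G$. Assumption \ref{ass:H} holds by \eqref{eq:HK}, and Assumption \ref{ass:Y} holds by the first stage. So $N'=T^*G\times Z$ with $J_{N'}=\Jad\oplus J_Z$ and $g_{N'}=\gad\oplus g_Z$ is pseudo-K\"ahler, and $H$ acts on it by automorphisms. Lemma \ref{lem:level} gives freeness and properness on $\psi^{-1}(0)$. The remaining hypothesis \eqref{eq:star} for $H$ on $N'$ is assumed, so Theorem \ref{thm:suff} makes $\Ind_H^G Z=N'/\!\!/H$ a pseudo-K\"ahler manifold.

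\emph{Symplectomorphism.} In both stages, the $\omega_{\mathrm{red}}$ produced by Theorem \ref{thm:B6} is the Marsden--Weinstein reduced form. It is characterized by its pullback to the level set agreeing with the restriction of $\omega_N$. The pseudo-K\"ahler data therefore refine the symplectic reductions of \cite{RatiuZiegler} without changing the underlying symplectic manifolds, and \eqref{eq:stages} supplies the canonical symplectomorphism $\Ind_H^G\Ind_K^H Y\cong\Ind_K^G Y$.

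The step I expect to need the most care is Proposition \ref{prop:stagefibre}, which the paper leaves to the reader: one must verify that the residual $H$-action preserves the reduced pair $(J_Z,g_Z)$. My plan there is to note two facts. First, the left $H$-action on $T^*H$ commutes with the right $K$-action and preserves $\Jad$ and $\gad$. Second, it therefore preserves the $K$-level set, the orbit directions $V$, and the spaces $J_NV$ and $\mathcal H=(V+J_NV)^{\perp_{g_N}}$. Since Theorem \ref{thm:B6} and Proposition \ref{prop:TFAE}(v) characterize the reduced structure by restriction to $\mathcal H$, the action preserves it. The momentum map and its equivariance are the standard ones from \cite{RatiuZiegler}.
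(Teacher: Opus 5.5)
Your proposal is correct and follows the paper's own proof: Proposition \ref{prop:stagefibre} makes $Z$ an admissible pseudo-K\"ahler Hamiltonian $H$-space, Theorem \ref{thm:suff} is then applied to the $H$-action on $T^*G\times Z$, and the symplectic identification is taken from \eqref{eq:stages}. Your sketch for Proposition \ref{prop:stagefibre} is also the intended argument: left translations commute with the right $K$-action and preserve $\Jad$ and $\gad$, hence preserve the level set, the orbit directions and the horizontal distribution.
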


\begin{proof}
	By Proposition \ref{prop:stagefibre} the pair given by the group $H$ and $Z$ satisfies all the standing hypotheses of Section \ref{sec:polar}; Theorem \ref{thm:suff}, applied to the $H$-action on $T^*G\times Z$, produces the pseudo-K\"ahler structure on the quotient, and the symplectic identification is \eqref{eq:stages}.
\end{proof}

\begin{corollary}[the definite case]\label{cor:definitestages}
	If $\kappa_e$ is definite and $(Y,\omega_Y,J_Y)$ is genuinely K\"ahler, then all the instances of condition \eqref{eq:star} --- for the two stages and for the direct induction --- hold automatically: induction in stages holds unconditionally in the K\"ahler category, and both sides of \eqref{eq:stages} are K\"ahler manifolds.
\end{corollary}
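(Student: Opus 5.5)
The plan is to reduce every instance of \eqref{eq:star} to Proposition \ref{prop:definite}, applied three times: to the first stage ($K$ acting on $T^*H\times Y$), to the second stage ($H$ acting on $T^*G\times Z$), and to the direct induction ($K$ acting on $T^*G\times Y$). Once \eqref{eq:star} is known in each case, Theorem \ref{thm:stages} gives the pseudo-K\"ahler structure on $\Ind_H^G\Ind_K^H Y$ and Theorem \ref{thm:suff} gives the one on $\Ind_K^G Y$. It then remains to show that both reduced metrics are definite, i.e.\ genuinely K\"ahler.

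\textbf{Step 1: inputs and the first stage.} Since $\kappa_e$ is definite, \eqref{eq:HK} holds automatically, because the restriction of a definite form to any subspace is non-degenerate. By Lemma \ref{lem:heredity}, $(H,\kah)$ is again of Loeb type, with $\kah$ definite. I fix the sign so that $\kappa_e>0$; if $\kappa_e<0$, replacing $\kappa_e$ by $-\kappa_e$ changes the identification $\lieg^*\cong\lieg$, so this normalisation has to be checked against the conventions of Remark \ref{rem:flatlimit}. With $Y$ genuinely K\"ahler, $g_Y$ is positive on all directions, hence on orbit directions. Proposition \ref{prop:definite} applied to $(H,\kah)$, the subgroup $K$ and the fibre $Y$ then gives $B_z\succeq\kappa_\liek>0$, so \eqref{eq:star} holds for the first stage. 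The same proposition applied to $(G,\kappa_e)$, $K$ and $Y$ gives \eqref{eq:star} for the direct induction.

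\textbf{Step 2: positivity of the reduced metrics.} The remaining point, which I expect to be the main obstacle, is that the second stage needs the fibre $Z=\Ind_K^H Y$ to be genuinely K\"ahler. Here I use Remark \ref{rem:flatlimit}: for definite $\kappa$ one has $\fsym(it)\ge 1$ and $\vsym(it)>0$, and by the symmetry rules \eqref{eq:symbolrules} the form $\gad$ in \eqref{eq:g} is positive definite everywhere. Concretely, I would check that the block matrix with entries $\fsym,\tsym,\vsym$ has positive determinant symbol; on the spectrum $it$ this reads $\fsym\vsym+\tsym^2=1-\tsym^2+\tsym^2=1$, which is the identity already used in the proof of Theorem \ref{thm:reproduce}. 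Hence $g_N=\gad\oplus g_Y$ is positive definite on $T^*H\times Y$. By Theorem \ref{thm:B6}, the reduced metric is the restriction of $g_N$ to the horizontal space $\mathcal H_z$ of Proposition \ref{prop:TFAE}(v), so it is positive definite as well, and $Z$ is K\"ahler.

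\textbf{Step 3: the second stage and conclusion.} By Proposition \ref{prop:stagefibre}, $Z$ satisfies Assumption \ref{ass:Y} for $H$, so Proposition \ref{prop:definite} applies to $(G,\kappa_e)$, $H$ and $Z$, giving \eqref{eq:star} for the second stage. The same restriction argument as in Step 2 shows that both $\Ind_H^G Z$ and $\Ind_K^G Y$ carry positive definite reduced metrics, so they are K\"ahler. Finally, they are symplectomorphic by \eqref{eq:stages}.
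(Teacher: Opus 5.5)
Your proposal is correct and follows the paper's proof: Proposition \ref{prop:definite} for the inner stage and for the direct induction; positivity of $\gad$, hence of $g_N$ on $T^*H\times Y$ and of its horizontal restriction, to make $Z$ K\"ahler; and Proposition \ref{prop:definite} again for the outer stage. The only difference is that you check positivity of $\gad$ directly from \eqref{eq:g} via $\fsym(it)\ge 1$ and $\fsym\vsym+\tsym^2=1$, where the paper just cites Sz\H{o}ke. Your sign caveat cannot be removed by normalisation: replacing $\kappa_e$ by $-\kappa_e$ turns $\Jad$ into $-\Jad$, and for $\kappa_e<0$ one has $\gad=\kappa_e\oplus\kappa_e<0$ on the zero section, where \eqref{eq:star} can then fail against a K\"ahler $Y$. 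So ``definite'' has to be read as ``positive definite'', which is exactly what the paper's argument tacitly assumes as well.
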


\begin{proof}
	The restriction $\kah$ is definite, so Proposition \ref{prop:definite} applies to the inner stage. Moreover, for definite $\kappa_e$ the adapted metric is the classical positive K\"ahler metric of the Grauert tube \cite{Szoke}, so $g_{N}=\gad\oplus g_Y$ on $T^*H\times Y$ is positive definite, and so is its restriction $g_Z$ to the horizontal distribution: $Z$ is K\"ahler. In particular $g_Z\geq0$ on the $H$-orbit directions, and Proposition \ref{prop:definite} applies to the outer stage; the same argument gives \eqref{eq:star} for the direct induction $\Ind_K^GY$.
\end{proof}

{
When condition \eqref{eq:star} holds \emph{also} for the direct $K$-action on $T^*G\times Y$, both sides of \eqref{eq:stages} carry pseudo-K\"ahler structures on the same underlying symplectic manifold, but the two structures may differ in general. The following example proves both statements in the smallest possible setting, $K=\{e\}$ and $Y=\{pt\}$, where the direct induction is $T^*G$ itself with the adapted pair $(\Jad,\gad)$, and \eqref{eq:star} for the direct induction is vacuous.

\begin{example}[the staged and the direct structures differ]\label{ex:stagesdiff}
	Let $G$ be the oscillator group, $K=\{e\}$, $Y=\{pt\}$. The inner stage is vacuous: $Z=\Ind_{\{e\}}^{H}\{pt\}=T^*H$, with the adapted structure of Lemma \ref{lem:heredity} and the $H$-action by cotangent lifts of left translations, whose generator at $(h,u)\in H\times\lieh\cong T^*H$ is $\zeta_Z(h,u)=(\Ad_{h^{-1}}\zeta,0)$ and whose momentum map is $\pair{\Psi_Z(h,u)}{\zeta}=\kah(\Ad_hu,\zeta)$. The fibre term of Theorem \ref{thm:gram} for the outer stage is therefore
	\[
	\Gform(\zeta,\zeta')
	=\gad^{H}\bigl((\Ad_{h^{-1}}\zeta,0),(\Ad_{h^{-1}}\zeta',0)\bigr)
	=\kah\bigl(\fsym(\ad_u)\Ad_{h^{-1}}\zeta,\ \Ad_{h^{-1}}\zeta'\bigr) ,
	\]
	the $\sigma$-form of Remark \ref{rem:sigma} for the group $H$; for \emph{abelian} $H$ it is simply $\kah(\zeta,\zeta')$, and the outer Gram form at a level point $\bigl((q,x),(h,u)\bigr)$ becomes
	\[
	B'_z(\zeta,\zeta')=\kappa_e\bigl(\fsym(\ad_x)\zeta,\zeta'\bigr)+\kah(\zeta,\zeta'),
	\qquad \zeta,\zeta'\in\lieh .
	\]
	Write $x=aH+bP+cQ+dE$, so that, as in Example \ref{ex:TGH}, $\fsym(\ad_x)=\Id-h(a)\ad_x^2$ with $\ad_x^2H=abP+acQ-(b^2+c^2)E$ and $\ad_xE=0$.

	\smallskip
	\emph{(a) $\lieh=\operatorname{span}\{H,E\}$: the staged structure exists globally, but differs from $(\Jad,\gad)$.}
	In the basis $(H,E)$,
	\[
	B'_z=\underbrace{\begin{pmatrix} h(a)\,(b^2+c^2) & 1\\[1mm] 1 & 0\end{pmatrix}}_{\kappa_e(\fsym(\ad_x)\,\cdot\,,\cdot)|_{\lieh}}
	+\underbrace{\begin{pmatrix}0&1\\[1mm]1&0\end{pmatrix}}_{\Gform=\kah}
	=\begin{pmatrix} h(a)\,(b^2+c^2) & 2\\[1mm] 2 & 0\end{pmatrix},
	\quad \det B'_z=-4 :
	\]
	condition \eqref{eq:star} holds at \emph{every} level point --- by the same mechanism as in Example \ref{ex:oscillator}: the row of the central $E$ is constant and hyperbolically paired --- so Theorem \ref{thm:stages} makes $\Ind_H^G(T^*H)$ a globally defined pseudo-K\"ahler manifold, canonically symplectomorphic to $\bigl(T^*G,\Omega=d\varpi\bigr)$ via the stages identification \eqref{eq:stages}, induced by
	\[
	\bigl((q,x),(h,u)\bigr)\longmapsto\bigl(qh,\ \Ad_{h^{-1}}x\bigr)
	\]
	(one checks that this map kills the generators and matches the reduced symplectic form with $\Omega$). We claim that this symplectomorphism is \emph{not} an isomorphism of pseudo-K\"ahler manifolds, i.e.\ that the transported structure $(J',g')$ differs from $(\Jad,\gad)$.

	It suffices to look at the zero section. At the level point $z_0=\bigl((e,0),(e,0)\bigr)$ everything is flat; the differential of the map above is $(\xi,\eta,\alpha,\beta)\mapsto(\xi+\alpha,\ \eta)$, the level condition is $\beta=P_\lieh\eta$, the generators are $\zeta_{N}(z_0)=(-\zeta,0,\zeta,0)$, $\zeta\in\lieh$, and $g_N$-orthogonality to them reads $P_\lieh\xi=\alpha$. Solving these constraints, the horizontal lifts of the $\lieh$-directions of $T^*G$ are, for $\zeta\in\lieh$,
	\[
	\widehat{(\zeta,0)}=\bigl(\tfrac\zeta2,\,0,\,\tfrac\zeta2,\,0\bigr),
	\qquad
	\widehat{(0,\zeta)}=\bigl(0,\,\zeta,\,0,\,\zeta\bigr) ,
	\]
	whence, for $\zeta,\zeta'\in\lieh$,
	\[
	g'\bigl((\zeta,0),(\zeta',0)\bigr)=\tfrac12\,\kappa_e(\zeta,\zeta'),
	\qquad
	g'\bigl((0,\zeta),(0,\zeta')\bigr)=2\,\kappa_e(\zeta,\zeta'),
	\]
	and
	\[
	J'(\zeta,0)=\bigl(0,\tfrac\zeta2\bigr),
	\quad
	J'(0,\zeta)=(-2\zeta,0),
	\]
	while $g'=\gad$ and $J'=\Jad$ on all the $\lies$-directions, with no mixed terms. At $x=0$, however, $\gad=\kappa_e\oplus\kappa_e$ and $\Jad(\xi,\eta)=(-\eta,\xi)$: along $\lieh$, the staged structure \emph{halves} the metric on the base directions and \emph{doubles} it on the fibre directions --- both pairs being compatible with the same symplectic form $\Omega$. Nor is this a boundary effect of the zero section: at every level point with $h=e$ (to which every point can be moved by the $H$-action, which acts by isometries of both structures) the vector $\bigl((0,E),(0,E)\bigr)$ is tangent to the level and $g_N$-orthogonal to all generators --- both facts because $E$ is central, so $\ad_xE=0=\ad_uE$ --- hence horizontal, and it lifts $(0,E)$; pairing it with any horizontal lift $\bigl((\xi^*,\eta^*),(\alpha^*,\beta^*)\bigr)$ of $(0,H)$ gives, by \eqref{eq:g} and \eqref{eq:symbolrules},
	\[
	g'\bigl((0,H),(0,E)\bigr)=\kappa_e(\eta^*,E)+\kah(\beta^*,E)=2\,\kappa_e(\eta^*,E)=2 ,
	\]
	since $\beta^*=P_\lieh\eta^*$ on the level and $\eta^*=H-\ad_x\alpha^*$ pairs with the central $E$ as $\kappa_e(H,E)=1$; whereas
	\[
	\gad\bigl((0,H),(0,E)\bigr)=\kappa_e\bigl(\vsym(\ad_x)H,E\bigr)=\kappa_e(H,E)=1
	\qquad\text{everywhere.}
	\]

	\smallskip
	\emph{(b) $\lieh=\R\,(H-E)$: the staged structure does not even exist globally.}
	For the one-dimensional subalgebra of Example \ref{ex:TGH} the same formula gives the scalar Gram form
	\begin{align*}
	B'_z&=\kappa_e\bigl(\fsym(\ad_x)(H-E),H-E\bigr)+\kah(H-E,H-E) \\
	&=\bigl[h(a)(b^2+c^2)-2\bigr]-2 \\
	&=h(a)(b^2+c^2)-4 ,
	\end{align*}
	which vanishes on a non-empty hypersurface of the level set \textup(e.g.\ $a=0$, $b^2+c^2=12$\textup): condition \eqref{eq:star} \emph{fails} for the outer stage, although it is vacuous for the direct induction.

	%Through different stagings, the same symplectic manifold $(T^*G,\Omega)$ thus receives: its direct adapted structure $(\Jad,\gad)$; a second, globally defined but \emph{different}, pseudo-K\"ahler refinement, case \textup{(a)}; and a third one, defined only off a hypersurface, case \textup{(b)}. Both condition \eqref{eq:star} and the reduced structure genuinely depend on the staging, and the symplectomorphism \eqref{eq:stages} is not, in general, an isomorphism of pseudo-K\"ahler manifolds. This is in sharp contrast with the self-induction of Theorem \ref{thm:reproduce}, where reduction does reproduce the canonical structure: there the fibre structure is itself built from the infinitesimal geometry of $G$, while here the inner stage inserts the adapted structure of the subgroup $H$, which the outer reduction remembers.
\end{example}
}

\begin{remark}
	Another property involving the induced space is a symplectic version of the Frobenius reciprocity. This result was first established in \cite{GuilleminSternberg} for coadjoint orbits, then it was extended to a set-theoretic bijection for Hamiltonian and prequantum spaces in \cite{RatiuZiegler}, and finally refined to a diffeomorphism of diffeological spaces in \cite{Barbieri:2025}. In the present pseudo-K\"ahler setting, however, Frobenius reciprocity encounters additional obstructions. Besides the fact that the reduced symplectic (or prequantum) space need not arise as a quotient, which could be addressed by diffeological means \cite{Barbieri:2025}, the same phenomena that obstruct induction in stages also obstruct a pseudo-K\"ahler refinement of the reciprocity theorem.
\end{remark}

%\begin{remark}[the trivial inner stage]\label{rem:trivialstage}	For $K=\{e\}$, condition \eqref{eq:star} for the inner stage is vacuous and $Z=\Ind_{\{e\}}^HY=T^*H\times Y$, with the product pseudo-K\"ahler structure and the $H$-action given by the cotangent lift of left translations on the first factor. For the outer stage, the fibre term of Theorem \ref{thm:gram} at a level point $\bigl((q,x),\,(h,u,y)\bigr)$ is, for $\xi,\eta\in\lieh$,	\begin{align*}	\Gform(\xi,\eta)	&=\gad\bigl((\Ad_{h^{-1}}\xi,0),(\Ad_{h^{-1}}\eta,0)\bigr) \\	&=\kah\bigl(\fsym(\ad_u)\Ad_{h^{-1}}\xi,\ \Ad_{h^{-1}}\eta\bigr)	=\sigma_{(h,u)}(\xi,\eta),	\end{align*}	the $\sigma$-form of Remark \ref{rem:sigma} for the group $H$: the Gram form of the composite induction is a sum of two $\fsym$-compressions, one for each stage. Consistently, \eqref{eq:stages} gives $\Ind_H^G\bigl(T^*H\times Y\bigr)=\Ind_{\{e\}}^G\,Y=T^*G\times Y$.\end{remark}

\bigskip
\textbf{Tool and computational resource disclosure.}
In accordance with the Leiden Declaration on Artificial Intelligence and Mathematics~\cite{LeidenDeclaration}, we disclose the following. The research questions, the conceptual framework and the mathematical direction of this work are entirely the authors' own. An AI assistant (Anthropic's Claude, July and August 2026) was used interactively in exploratory discussions concerning some of the arguments, in drafting parts of the text and in performing numerical consistency checks on explicit examples. No proof assistant or formal verification system was used. All statements and proofs have been checked in detail by the authors, who take sole and full responsibility for the correctness and adequacy of the arguments and results, and for the accuracy and completeness of the attribution of prior work.

\bigskip
\textbf{Acknowledgments.} Both authors are grateful to S. Gallivanone for discussion about Grauert tubes. The second author is grateful to R.~Sz\H{o}ke for kindly sharing a copy of his paper on Grauert tubes.
	
The second author is a member of GNSAGA (Gruppo Nazionale per le Strutture Algebriche, Geometriche e le loro Applicazioni) of INdAM (Istituto Nazionale di Alta Matematica ``Francesco Severi'') and gratefully acknowledges its support.

\end{document}